\newtheorem{theorem}{Theorem}[section]
\newtheorem{proposition}[theorem]{Proposition}
\newtheorem{lemma}[theorem]{Lemma}
\newtheorem{corollary}[theorem]{Corollary}
\theoremstyle{definition}
\newtheorem{definition}[theorem]{Definition}
\theoremstyle{remark}
\newtheorem{remark}[theorem]{Remark}
\numberwithin{equation}{section}
\newcommand{\R}{\mathbb{R}}
\newcommand{\Z}{\mathbb{Z}}
\renewcommand{\H}{\mathbb{H}}
\newcommand{\SL}{\operatorname{SL}}
\newcommand{\Mp}{\operatorname{Mp}}
\newcommand{\SO}{\operatorname{SO}}
\begin{document}

\title[The Shimura-Shintani Correspondence via singular theta lifts]
{The Shimura-Shintani Correspondence via singular theta lifts and currents}

\author{Jonathan Crawford}
\address{172, Gloucester Road, 
Cheltenham,
Gloucestershire, GL51 8NR,
United Kingdom}
\email{j.k.crawford@live.co.uk}

\author{Jens Funke}
\address{Department of Mathematical Sciences,
Durham University,
South Road,
Durham, DH1 3LE,
United Kingdom}
\email{jens.funke@durham.ac.uk}


\date{\today.}



\begin{abstract}
We describe the construction and properties of a singular theta lift for the orthogonal group $\SO(2,1)$. We obtain locally harmonic Maass forms in the sense of \cite{BKK} with singular sets along geodesics in the upper half plane. We consider 
these forms as currents and derive properties of the Shimura-Shintani correspondence. This work provides extensions of the theta lifts considered in \cite{Hoevel} and \cite{BKV}.
\end{abstract}

\maketitle

\section{Introduction and Statement of Results}
\subsection{Introduction}
The modern theory of modular forms of half-integral weight was initiated by Shimura \cite{Shimura}, where he constructed a family of maps from half-integral weight $k+1/2$ cusp forms to even weight $2k$ holomorphic modular forms. Shortly afterwards the Shimura correspondence was realised as a theta lift by Niwa \cite{Niwa} while Shintani \cite{Shintani} described the adjoint map also as a theta lift. Much later Borcherds \cite{Borcherds} introduced a singular theta lift of modular forms. This encompassed the Shimura lift, as well as many other examples. Borcherds used a regularisation of the theta integral to enlarge the space of inputs from cusp forms to weakly holomorphic modular forms. The Borcherds lift also gave rise to remarkable product expansions for automorphic forms associated to orthogonal groups. The extension of the constructions from \cite{Borcherds} by Bruinier and the second author \cite{B-Habil,BF-Duke} led to the introduction of harmonic weak Maass forms. This coincided with the work of Zwegers \cite{Zwegers} who showed that Ramanujan's famous mock theta functions were holomorphic parts of harmonic weak Maass forms. Harmonic weak Maass forms have had many recent applications ranging from combinatorial number theory, arithmetic geometry, to mathematical physics. 

Another class of automorphic objects, locally harmonic weak Maass forms, was introduced by Bringmann, Kane, and Kohnen \cite{BKK}. These forms are similar to harmonic weak Maass forms but may also exhibit singularities. 
H{\"o}vel \cite{Hoevel} described a singular theta lift of weight $1/2$ harmonic weak Maass forms to weight $0$ locally harmonic Maass forms which he then linked to the Shimura lift in the case $k=1$. Bringmann, Kane, and Viazovska \cite{BKV} considered for $k$ even the lift of certain scalar-valued Poincar\'{e} series of full level and weight $3/2-k$. The lift in the opposite direction was considered by Alfes, Griffin, Ono, and Rolen \cite{AGOR} for $k=0$ and by Alfes-Neumann and Schwagenscheidt  \cite{AS} for general $k$. 

This paper further develops these ideas. For $k \geq 1$, we consider a regularised theta lift of harmonic weak Maass forms of weight $3/2-k$ by integrating against a suitable theta kernel associated to the dual pair $\mathrm{O}(2,1)$  and $\mathrm{SL}_2(\mathbb{R})$ (essentially the same one considered in \cite{BKV,BKK}). Using this, we obtain locally harmonic Maass forms of weight $2-2k$ with singularities along a collection of geodesics. We explicitly compute the Fourier expansion and give the wall crossing formulas when moving from one Weyl chamber to another. We link our lift to the Shimura lift which then allows us to re-derive properties of the Shimura lift. Finally, we also consider locally harmonic weak Maass forms and the singular lift as distributions which we believe is the proper setting for such forms. In the present situation this enables us to relate our work to the Shintani lift.

\subsection{Statement of Results}

Throughout, for $N \in \mathbb{N}$, we consider the quadratic space $(V,Q)$ given by 
\[  
V \coloneqq \left\{ \lambda \in \mathrm{M}_2(\mathbb{Q}) \mid \mathrm{tr}(\lambda) = 0 \right\} 
\] 
with quadratic form $Q(\lambda) \coloneqq - N \det(\lambda)$. Note $(V,Q)$ has signature $(2,1)$. We consider the even lattice
\[ L \coloneqq \left\{\left(
\begin{smallmatrix}
	b & -a/N
\\	c & -b
\end{smallmatrix}\right)
\bigg| \ a,b,c \in \mathbb{Z}\right\},
\]
which has level $4N$, and we have $L'/L \cong \mathbb{Z}/2N\mathbb{Z}$ where $L'$ denotes the dual lattice.

We let $\rho_{L}$ be the associated Weil representation of the inverse image of $\SL_2(\Z)$ under the covering map of the metaplectic group $\Mp_2(\R)$ to $\mathrm{SL}_{2}(\mathbb{R})$. We let $\kappa \in \frac{1}{2}\mathbb{Z}$. We write $M_{\kappa, \rho_{L}}$ ($S_{\kappa, {\rho}_{L}}$) for the space of vector-valued holomorphic modular (cusp) forms of weight $\kappa$ for $\rho_L$.

We let $H_{\kappa, \rho_{L}}$ be the space of {harmonic weak Maass forms} (see  \cite{BF-Duke}) which consists of real analytic forms of weight $\kappa$ on the upper half plane $\mathbb{H}$ of exponential growth which are mapped under the operator $\xi_{\kappa} = 2iv^{\kappa}\overline{\frac{\partial}{\partial\overline{\tau}}}$ ($\tau=u+iv \in \mathbb{H}$) to the space of cusp forms $S_{2-\kappa, \overline{\rho}_{L}}$ of weight $2-\kappa$. The Fourier expansion of a form 
$f \in H_{\kappa, \rho_{L}}$ is given by 
\[  
f(\tau)= \sum_{h \in L'/L}\sum_{ n \gg - \infty }c^{+}(n,h)e(n \tau)\mathfrak{e}_{h} + \sum_{h \in L'/L}\sum_{n < 0 }c^{-}(n,h)\Gamma(1-\kappa,4\pi |n|v)e(n \tau)\mathfrak{e}_{h}.
\]
Here $\Gamma( \cdot, \cdot)$ denotes the incomplete gamma function, and $\mathfrak{e}_{h}$ is the standard basis element of $\mathbb{C}[L'/L]$ corresponding to $h \in L'/L$. 
We also consider the space of (scalar-valued) \emph{locally harmonic weak Maass forms} $LH_{\kappa}$ ($\kappa \in 2\mathbb{Z}$), see \cite{BKK}. These forms mirror $H_{\kappa}$ but are only harmonic in connected components, away from an  exceptional measure zero set. We fix $k \in \mathbb{Z}, k \geq 1$, and let $\Delta \in \mathbb{Z}$ be a fundamental discriminant giving rise to a genus character. We also let $r \in \mathbb{Z}$ such that $\Delta \equiv r^{2} \pmod{4N}$. With this data we consider two closely related kernel functions $\Theta_{\Delta,r,k}(\tau,z)$ and $\Theta_{\Delta,r,k}^{*}(\tau,z)$ which underlie the singular theta lift and the Shimura lift with the following transformation properties:
\begin{enumerate}
 \itemsep-0.2em  \item $\Theta_{\Delta,r,k}(\tau,z)$ has weight $k-3/2$ in $\tau$ for $\tilde{\Gamma}$. It has weight $2-2k$ in $z$ for $\Gamma_{0}(N)$.
\item $\Theta_{\Delta,r,k}^{*}(\tau,z)$ has weight $k+1/2$ in $\tau$ for $\tilde{\Gamma}$. It has weight $2k$ in $-\overline{z}$ for $\Gamma_{0}(N)$. 
\item $\xi_{k+1/2, \tau}( \Theta_{\Delta,r,k}^{*}(\tau,z)) = -\frac{1}{2}\xi_{2-2k,z}\left(v^{k-3/2}\Theta_{\Delta,r,k}(\tau,z)\right)$.
\end{enumerate} 
These properties are of crucial importance for the geometric aspects of the theta correspondence and singular theta lifts for arbitrary signature, see \cite{KM90} and \cite{BF-Duke} (in our case of $\mathrm{O}(2,1)$ it is the case $k=1$). For $k >1$, it can be also found in the setting of differential forms with values in local systems in \cite{FM-coeff}. 

For $f \in H_{3/2-k, \overline{\rho}}$ we then define the \emph{singular theta lift} as the regularised Petersson scalar product 
\[ 
\Phi_{\Delta,r,k}(z,f) \coloneqq  \int_{ \mathcal{F}}^{\mathrm{reg}} \left\langle f(\tau), \overline{\Theta_{\Delta,r,k}(\tau,z)}\right\rangle \frac{dudv}{v^2}.
\]
The asymptotic behaviour of $f$ means this integral diverges in general, hence needs to be generalized along the lines introduced by Harvey and Moore and Borcherds \cite{HM,Borcherds}. The regularised integral $\Phi_{\Delta,r,k}(z,f)$ then converges pointwise for any $z \in \mathbb{H}$ with singularities along $Z'_{\Delta,r}(f)$, a locally finite collection of geodesics in the upper half plane associated to the principal part of $f$. These geodesics divide $\mathbb{H}$ into connected components called Weyl chambers. Each component of $Z'_{\Delta,r}(f)$ arises from a geodesic associated to a rational vector of positive length. We show 

\begin{theorem}[Theorem \ref{lift is a local maasd form theorem}]The singular theta lift $\Phi_{\Delta,r,k}(z,f)$ defines a locally harmonic Maass form of weight $2-2k$ for $\Gamma_{0}(N)$ with exceptional set $Z'_{\Delta,r}(f)$:
\[ 
\Phi_{\Delta,r,k}: H_{3/2-k, \overline{\rho}} \longrightarrow LH_{2-2k}(\Gamma_{0}(N)).
\]
\end{theorem}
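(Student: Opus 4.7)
The plan is to verify in turn the defining properties of a locally harmonic Maass form of weight $2-2k$ for $\Gamma_{0}(N)$: modular transformation in $z$, pointwise convergence of the regularised integral off a locally finite union of geodesics, annihilation by $\Delta_{2-2k,z}$ on each Weyl chamber, the prescribed averaging behaviour across the exceptional set, and admissible growth at the cusps. Modularity in $z$ is immediate from property (1) of $\Theta_{\Delta,r,k}$: the kernel transforms with weight $2-2k$ for $\Gamma_{0}(N)$ in a way that is independent of $\tau$, so this behaviour passes through the $\tau$-integration over $\mathcal{F}$ intact.

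For convergence I would follow the Harvey--Moore--Borcherds scheme: truncate $\mathcal{F}$ at height $T$ in $\tau$, split $f$ into its principal part (the finite sum of exponentials fed by the coefficients $c^{+}(n,h)$ with $n<0$) and its subexponential complement, and subtract the explicit polynomial-in-$T$ divergence produced by each principal term. A fixed $z$ lies in the convergence set precisely when no principal term resonates with a singular vector of the kernel. Analysing $\Theta_{\Delta,r,k}(\tau,z)$ as a function of $z$ shows that, for a fixed Fourier index $n<0$ and class $h\in L'/L$, its singularities are supported on the geodesics $c_{\lambda}$ associated to rational vectors $\lambda\in L+h$ with $Q(\lambda)=-n$; only those components actually cut out by the principal part of $f$ contribute, producing exactly $Z'_{\Delta,r}(f)$.

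The main step is local harmonicity. Away from $Z'_{\Delta,r}(f)$ I would pull $\xi_{2-2k,z}$ inside the regularised integral and invoke property (3): up to an explicit power of $v$, the quantity $\xi_{2-2k,z}\overline{\Theta_{\Delta,r,k}}$ is a $\overline{\partial}_{\tau}$-derivative of $\overline{\Theta^{*}_{\Delta,r,k}}$. Stokes' theorem on the truncated fundamental domain then converts the resulting integrand into a boundary term at the cusp of $\mathcal{F}$. Harmonicity of $f$ makes the shadow $\xi_{3/2-k}f$ a cusp form in $S_{k+1/2,\rho}$, and a standard unfolding identifies the surviving contribution with a Petersson pairing of this shadow against $\Theta^{*}_{\Delta,r,k}$. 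By property (2), such a pairing is a holomorphic modular form of weight $2k$ in $z$ for $\Gamma_{0}(N)$, hence is annihilated by $\xi_{2k,z}$. Composing, $\Delta_{2-2k,z}\Phi_{\Delta,r,k}(z,f)=-\xi_{2k,z}\xi_{2-2k,z}\Phi_{\Delta,r,k}(z,f)=0$ on each Weyl chamber.

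The averaging behaviour across $Z'_{\Delta,r}(f)$ reduces to a local analysis near a single geodesic $c_{\lambda}$: the singular part of $\Phi_{\Delta,r,k}$ there is produced by the single coefficient $c^{+}(-Q(\lambda),\lambda+L)$ paired against the explicit jump of $\Theta_{\Delta,r,k}$ across $c_{\lambda}$, and one checks that the two one-sided limits average to the value at a point of $c_{\lambda}$ in the framework of \cite{BKK}. Admissible growth at the cusps is provided by the polynomial truncation correction built into the regularisation. I expect the hard step to be the Stokes' computation in the harmonicity argument: exchanging $\xi_{2-2k,z}$ with the regularised integral requires a uniform estimate in a neighbourhood of $z$, and one must verify that neither the cuspidal boundary term in $\tau$ nor any boundary contribution arising from the $z$-singularities of the kernel introduces an obstruction to harmonicity inside a Weyl chamber.
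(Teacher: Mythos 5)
Your overall architecture (modularity in $z$, convergence off a locally finite union of geodesics, harmonicity on Weyl chambers, averaging across the walls, cusp growth) matches the paper's decomposition of the problem into Theorems \ref{Pointwise}, \ref{Singularities}, \ref{Locally Harmonic} and Proposition \ref{asymptotic of lift}, and your convergence and singularity analysis is essentially the Harvey--Moore--Borcherds argument the paper uses. The genuine gap is in the harmonicity step. You write $\Delta_{2-2k,z}=\xi_{2k,z}\xi_{2-2k,z}$, identify $\xi_{2-2k,z}\Phi_{\Delta,r,k}(z,f)$ with (twice) the Shimura lift $\Phi^{*}_{\Delta,r,k}(z,\xi_{3/2-k}f)$ of the shadow via the kernel identity and Stokes' theorem, and then claim that this pairing is \emph{holomorphic} in $z$ ``by property (2).'' Property (2) only records the modular transformation of $\Theta^{*}_{\Delta,r,k}$ as a function of $-\overline{z}$; the kernel itself is far from holomorphic in $z$ (it carries the Gaussian $e(Q_{z}(\lambda)iv/|\Delta|)$ and the factor $q_{z}(\lambda)^{k}/y^{2k}$), so holomorphy of $\Phi^{*}$ does not follow from the transformation law. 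To close this you would need a second intertwining identity expressing $\xi_{2k,z}\Theta^{*}_{\Delta,r,k}$ (or $L_{2k,z}\Theta^{*}_{\Delta,r,k}$) as an exact form in $\tau$, so that pairing with the cusp form $\xi_{3/2-k}f$ and applying Stokes again kills it. In the paper holomorphy of the Shimura lift is only obtained \emph{after} the Fourier expansion of $\Phi_{\Delta,r,k}$ is computed (Proposition \ref{xi applied to expansion}), so as written your argument is either incomplete or circular within the logic of the paper.

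The paper avoids this entirely by a more direct route: Proposition \ref{Laplacians} (Shintani's Lemma 1.5) gives the pointwise relation $4\Delta_{k-3/2,\tau}\Theta_{\Delta,r,k}=\Delta_{2-2k,z}\Theta_{\Delta,r,k}+(6-4k)\Theta_{\Delta,r,k}$ between the two Laplacians acting on the kernel. Pulling $\Delta_{2-2k,z}$ under the regularised integral, using self-adjointness of $\Delta_{k-3/2,\tau}$ on the truncated fundamental domain (with vanishing boundary terms in the limit), and the eigenvalue identity $\Delta_{k-3/2,\tau}\bigl(v^{3/2-k}\overline{f}\bigr)=(3/2-k)v^{3/2-k}\overline{f}$ coming from harmonicity of $f$, the two terms cancel and $\Delta_{2-2k,z}\Phi_{\Delta,r,k}(z,f)=0$ follows with no reference to the Shimura lift at all. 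I would recommend you either adopt this Laplacian-intertwining argument or supply and prove the missing second kernel identity for $\xi_{2k,z}\Theta^{*}_{\Delta,r,k}$; the remaining parts of your outline (the averaging condition from the sign-function jump in Theorem \ref{Singularities}, and the $O(y^{k})$ cusp growth, which in the paper is read off from the Bernoulli-polynomial terms of the Fourier expansion rather than from the regularisation itself) are in the right spirit but should be tied to those explicit computations.
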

We find explicit \emph{wall crossing formulas} (Theorem \ref{wall cross theorem}) which tell us the nature of the singularities along $Z'_{\Delta,r}(f)$.
We also compute the Fourier expansion of $ \Phi_{k}(z,f)$. A simplified version omitting constants is as follows. 

\begin{theorem}[Theorem \ref{The Fourier Expansion}]\label{intro fourier expansion}
Assume $\Delta=1,r=1,k \geq 2$ even. Then for $y=Im(z)$ sufficiently large we have 
\begin{multline*} 
\Phi_{k}(z,f) {} = {}  c^{+}(0,0)\zeta(k) + \sum_{m \geq 1}c^{+}\left(-\tfrac{m^{2}}{4N},\tfrac{m}{2N}\right) B_{k}\left(mz+\lfloor mx \rfloor \right)
\\  + \sum_{m \geq 1}\sum_{n \geq 1}c^{-}\left(-\tfrac{m^{2}}{4N},\tfrac{m}{2N}\right)\left[e(nmz)+\Gamma(2k-1,4 \pi n m y) e(-nmz)\right]n^{-k}.
\end{multline*}
Here $B_{k}(x)$ is the $k$-th Bernoulli polynomial and $\zeta(s)$ denotes the Riemann zeta function. 
\end{theorem}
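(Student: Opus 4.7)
The plan is to compute the Fourier expansion via the standard Borcherds--Harvey--Moore strategy for regularised theta integrals, adapted to harmonic Maass form input in the spirit of Bruinier--Funke \cite{BF-Duke}. I split $f = f^+ + f^-$ into its holomorphic and non-holomorphic parts so that
\[
\Phi_k(z,f) = \Phi_k(z,f^+) + \Phi_k(z,f^-),
\]
and treat each piece separately.

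For $\Phi_k(z,f^+)$, I insert the Fourier expansions of $f^+$ and of the kernel $\Theta_{1,1,k}(\tau,z)$ in $\tau$ into the regularised integral. Integration over $u \in [0,1]$ forces the Fourier indices to match, reducing the problem to a sum over lattice vectors $\lambda \in L'$ of prescribed norm. For $y = \im(z)$ sufficiently large, I split this sum into orbits under the cusp stabiliser $\Gamma_\infty$. Only two classes of vectors contribute: first, isotropic vectors produce the $c^+(0,0)\zeta(k)$ term via a regularised sum of $n^{-k}$ coming from the divergent part at the cusp; second, negative-norm vectors $\lambda$ with $Q(\lambda) = -m^2/(4N)$ in the coset $m/(2N) \in L'/L$ parameterise the relevant geodesic cycles, and summing over these orbits together with the classical Hurwitz identity
\[
B_k(\{x\}) = -\frac{k!}{(2\pi \imag)^k} \sum_{n \neq 0}\frac{e(nx)}{n^k}
\]
produces the $B_k(mz + \lfloor mx \rfloor)$ terms. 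The floor function encodes the dependence of $B_k$ on the fractional part of its argument and is pinned down by the Weyl chamber containing $z$ for $y$ large.

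For $\Phi_k(z,f^-)$, I use the duality property (3) together with Stokes' theorem on the fundamental domain truncated at height $T$. This transforms the integral $\int \langle f^-, \overline{\Theta}\rangle \, du\, dv/v^2$ into one involving $\Theta^*_{1,1,k}$ paired with the cusp form $g := \xi_{3/2-k}(f)$ of weight $k+1/2$ --- essentially the classical Shintani lift of $g$. Standard orthogonality arguments then yield the terms
\[
c^-\!\bigl(-\tfrac{m^2}{4N},\tfrac{m}{2N}\bigr) n^{-k}\bigl[e(nmz) + \Gamma(2k-1,\, 4\pi nmy)\, e(-nmz)\bigr],
\]
in which the incomplete gamma piece appears as the non-holomorphic completion forced by the $\xi$-operator duality.

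The main obstacle I expect is the Borcherds regularisation itself, in particular the extraction of $\zeta(k)$ as a finite part of the divergent sum coming from the isotropic vectors at the cusp, and the cancellation of boundary terms as $T \to \infty$ in the Stokes argument. Secondary difficulties are tracking the Weyl-chamber dependence so that $\lfloor mx \rfloor$ appears with the correct offset, and bookkeeping normalisation factors across the $f^+$ and $f^-$ contributions so that the two pieces combine cleanly into the claimed expansion.
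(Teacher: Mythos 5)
Your outline has two genuine gaps, one in each half of the computation, and both stem from bypassing the partial Poisson summation that is the technical core of the paper's proof. For the $f^{+}$ piece you propose to insert the Fourier expansions and integrate over $u$, but the kernel $\Theta_{1,1,k}(\tau,z)$ depends on $x=\re(z)$ through $p_{z}(\lambda)$, $q_{z}(\lambda)$ and $Q_{z}(\lambda)$, not through exponentials $e(nx)$; orthogonality in $u$ matches Fourier indices in $\tau$ but does not by itself produce a Fourier series in $x$, and it also says nothing about the contribution of the compact part $\mathcal{F}_{1}$ of the fundamental domain (the strip computation only controls $v\geq 1$, which suffices for locating singularities but not for the full expansion). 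Moreover the split $\Phi_k(z,f)=\Phi_k(z,f^{+})+\Phi_k(z,f^{-})$ before any unfolding is delicate, since $f^{\pm}$ are not separately modular. The missing ingredient is the rewriting of $\Theta_k$ as a Poincar\'e series over $\tilde{\Gamma}_{\infty}\backslash\tilde{\Gamma}$ built from theta functions on the positive definite sublattice $K$ attached to $l_{\infty}$ (Theorem \ref{Poincare Series}); this is what simultaneously lets one unfold $\int_{\mathcal{F}}$ onto the strip, split the coefficients $c(m,h,v)=c^{+}+c^{-}\Gamma(\cdot)$ \emph{after} unfolding, and convert the $z$-dependence into honest exponentials, from which the $\zeta(k)$ term ($\lambda=0$ in $K'$, summed over $n\geq1$) and the Bernoulli terms then follow.

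The more serious gap is your route to the $c^{-}$ terms. The duality property (3) combined with Stokes' theorem is precisely Theorem \ref{Link}: it computes $\xi_{2-2k,z}\Phi_k(z,f)=2\Phi^{*}_k(z,\xi_{3/2-k}f)$. But $\xi_{2-2k,z}$ annihilates everything holomorphic in $z$ --- in particular the terms $c^{-}\bigl(-\tfrac{m^{2}}{4N},\tfrac{m}{2N}\bigr)n^{-k}e(nmz)$ you need to produce, as well as the Bernoulli terms. Knowing the Shimura/Shintani image therefore pins down only the coefficients of $\Gamma(2k-1,4\pi nmy)e(-nmz)$; the holomorphic half of the $c^{-}$ contribution lies in the kernel of $\xi$ and cannot be recovered this way. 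The paper obtains the full pair $e(nmz)+\Gamma(2k-1,4\pi nmy)e(-nmz)$ by explicitly evaluating the remaining $v$-integral of Hermite polynomials against $\Gamma(k-1/2,-4\pi mv)$ (Lemma \ref{Integral 2}); in the paper's logic the commutative diagram relating $\Phi_k$ and $\Phi^{*}_k$ is a \emph{consequence} of the Fourier expansion of Theorem \ref{The Fourier Expansion}, not an input to it, so using it here would be circular.
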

Note that vertical half-line singularities of $\Phi_{k}(z,f)$ are encompassed by the periodic Bernoulli polynomials in this expansion.

Let $g= \sum_{h \in L'/L}\sum_{n>0}a(n,h)e(n \tau) \mathfrak{e}_{h} \in S_{k+1/2, \rho}$ be a cusp form. Then in our setting the \emph{Shimura lift} is given by 
\begin{align*} \Phi_{\Delta,r,k}^{*}(z,g) \coloneqq \int_{\tau \in \mathcal{F}} \left\langle g(\tau), \Theta_{\Delta,r,k}^{*}(\tau,z)\right\rangle v^{k+1/2}\frac{dudv}{v^2}.
\end{align*}
We have the following key theorem which links the two lifts.
\begin{theorem}[Theorem \ref{Link}]\label{link intro}
For $f \in H_{3/2-k,\overline{\rho}}$ and $z \in \mathbb{H} \backslash Z'_{\Delta,r}(f)$ then
\[ \Phi_{\Delta,r,k}^{*}(z,\xi_{3/2-k,\tau}(f)) = \frac{1}{2}\xi_{2-2k,z}(\Phi_{\Delta,r,k}(z,f)). \]
\end{theorem}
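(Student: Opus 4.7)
The plan is to prove the identity by starting from the right-hand side, pulling $\xi_{2-2k,z}$ inside the regularised theta integral, trading the resulting $z$-derivative for a $\tau$-derivative on the companion kernel $\Theta^{*}_{\Delta,r,k}$ via property~(3), and finally integrating by parts in $\tau$ on a fundamental domain to transfer the derivative onto $f$. What remains will be recognisable as the Shimura-lift integrand for $\xi_{3/2-k,\tau}(f)$, up to a boundary contribution that must be shown to vanish.

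First, since $z \notin Z'_{\Delta,r}(f)$ the function $\Phi_{\Delta,r,k}(\cdot,f)$ is real analytic in a neighbourhood of $z$, and the Borcherds regulator acts only on the $\tau$-variable, so $\xi_{2-2k,z}$ commutes with $\int^{\mathrm{reg}}_{\mathcal{F}}$. Next, property~(3), rewritten as $\xi_{2-2k,z}\Theta_{\Delta,r,k} = -2\,v^{3/2-k}\,\xi_{k+1/2,\tau}\Theta^{*}_{\Delta,r,k}$, converts the pointwise integrand into one involving $\xi_{k+1/2,\tau}\Theta^{*}_{\Delta,r,k}$; moreover, the extra factor $v^{3/2-k}$ supplied by property~(3) is exactly the weight correction needed so that $\frac{du\,dv}{v^{2}}$ gets paired with $v^{k+1/2}$ on the Shimura-lift side. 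I would then apply Stokes' theorem on a truncated fundamental domain $\mathcal{F}_{T}$ to the $\tilde{\Gamma}$-invariant $1$-form built from the product $\langle f,\Theta^{*}_{\Delta,r,k}\rangle$; the pointwise adjunction identity (in the spirit of the Bruinier--Funke pairing of \cite{BF-Duke}) converts the integrand into the Shimura-lift integrand $\langle \xi_{3/2-k,\tau}(f),\Theta^{*}_{\Delta,r,k}\rangle v^{k+1/2}$ modulo an exact $1$-form, whose integral collapses by Stokes to a contribution along $v = T$, with the contributions from the $\Gamma$-identified sides of $\mathcal{F}_{T}$ cancelling.

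The main obstacle is therefore controlling the horizontal boundary contribution at $v = T$ together with the Borcherds regularisation. Although $\xi_{3/2-k,\tau}(f)$ is cuspidal, so that the Shimura lift on the left converges absolutely, $f$ itself can be of exponential growth at $i\infty$ and $\Theta^{*}_{\Delta,r,k}$ is only of moderate growth, so the boundary $1$-form need not decay on its own. The remedy is standard: expand $f$ and $\Theta^{*}_{\Delta,r,k}$ in their Fourier series at the cusp, integrate over $u \in [0,1]$ to retain only the constant-in-$u$ contributions, analytically continue the resulting one-dimensional $v$-integral in the regulator $s$ to $s = 0$, and pass to $T \to \infty$. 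The vanishing of the limiting boundary term should follow from the explicit shape of the constant Fourier coefficient of $\Theta^{*}_{\Delta,r,k}$ and the cancellation inherent in Borcherds' regularisation, paralleling the integral-weight template of \cite{BF-Duke} and the $k = 1$ case of \cite{Hoevel}. Once this vanishing is established the regularised right-hand side agrees on the nose with $\Phi^{*}_{\Delta,r,k}(z,\xi_{3/2-k,\tau}(f))$, completing the proof.
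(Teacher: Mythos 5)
Your proposal follows essentially the same route as the paper's proof of Theorem \ref{Link}: the paper runs the identical computation starting from the Shimura-lift side (moving $\xi_{3/2-k,\tau}$ off $f$ by Stokes' theorem, converting $L_{k+1/2,\tau}\Theta^{*}_{\Delta,r,k}$ into a $z$-derivative of $v^{k-3/2}\Theta_{\Delta,r,k}$ via Lemma \ref{Link Thetas}, and controlling a boundary term at $v=t$), whereas you start from $\tfrac12\xi_{2-2k,z}\Phi_{\Delta,r,k}$ and run the same chain in reverse. The decomposition, the key kernel identity, and the Stokes step are all the same.

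One correction to your treatment of the boundary term, which is the only delicate step. You assert that $\Theta^{*}_{\Delta,r,k}$ is ``only of moderate growth'' and that the vanishing of the $v=T$ contribution rests on ``cancellation inherent in Borcherds' regularisation.'' In fact, since $q_{z}(0)=0$ the $\lambda=0$ term is absent and the kernel decays exponentially as $v\to\infty$, uniformly in $u$; moreover the paper's regulator is simply $\lim_{t\to\infty}\int_{\mathcal{F}_t}$, with no auxiliary $s$-variable and hence no cancellation to invoke. The actual mechanism is pointwise: after integrating over $u$, the surviving terms of $\langle \overline{f^{+}},\Theta^{*}_{\Delta,r,k}\rangle\big|_{v=t}$ have the shape $c^{+}\!\left(-Q(\lambda)/|\Delta|,h\right)\chi_{\Delta}(\lambda)\left(q_{z}(\lambda)/y^{2}\right)^{k}e^{4\pi Q(\lambda_{z})t/|\Delta|}\,t^{1/2}$, and each tends to zero because $Q(\lambda_{z})<0$ strictly for every $\lambda$ with $Q(\lambda)>0$ contributing a nonzero principal-part coefficient --- which is exactly where the hypothesis $z\notin Z'_{\Delta,r}(f)$ enters (the terms with $Q(\lambda)\le 0$ decay for every $z$). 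Your proposed remedy (Fourier expand, integrate over $u$, let $T\to\infty$) is the right procedure and does yield this, but you should attribute the vanishing to the Gaussian decay of the theta kernel off the singular set rather than to the regularisation, and you should tie the hypothesis $z\notin Z'_{\Delta,r}(f)$ explicitly to this step rather than only to the interchange of $\xi_{2-2k,z}$ with the integral.
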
 
This link allows us to give new proofs of properties of the Shimura lift. Firstly, we can find its Fourier expansion by applying $\xi_{2-2k}$ to Theorem \ref{intro fourier expansion}. For example, if $\Delta=1,r=1, k \geq 2$ then 
\begin{align*} 
\Phi_{k}^{*}\left(z,g \right) =  \sum_{m \geq 1}\sum_{\substack{d \geq 1 \\ d|m }}d^{k-1} a\left(\tfrac{m^{2}}{4Nd^{2}},\tfrac{m}{2Nd} \right)  e\left(m z \right).
\end{align*}
Considering the action under  Atkin-Lehner involutions we then also recover that the Shimura lift maps cusp forms to {cusp forms} if $k \geq 2$ or $k=1, \Delta \neq 1$. In summary, we have the following commutative diagram:
\[      
   \begin{tikzcd}
   H_{3/2-k,\overline{\rho}}  \arrow[r,  "\qquad \Phi_k \qquad" ] \arrow[d,"\xi_{3/2-k}"']& LH_{2-2k}(\Gamma_{0}(N)) \arrow[d, "\xi_{2-2k}" ] \\ S_{k+1/2, \rho} \arrow[r, "\Phi^*_k" ] & S_{2k}(\Gamma_{0}(N)).
   \end{tikzcd}
  \]   
  
 \begin{remark}
We can also apply the iterated holomorphic differential $\mathcal{D}^{2k-1}= \left(\tfrac1{2\pi i} \tfrac{\partial}{\partial z}\right)^{2k-1}$  to the lift $\Phi_{\Delta,r,k}$ (which by Bol's Lemma is the same as applying the Maass raising operator $2k-1$ times). From the Fourier expansion it is easy to see that one again obtains up to a constant multiple $\Phi_{\Delta,r,k}^{*}(z,\xi_{3/2-k,\tau}(f))$, the Shimura lift of the shadow of $f$. This feature of locally harmonic Maass forms (which differs from the behavior of usual harmonic Maass forms, see \cite{BOR}) was first observed in \cite{BKK}.
 \end{remark} 
   
We also consider the singular theta lift as a \emph{distribution}. We fix a space of {test functions} $g(z) \in A_{\kappa}^{rd}(\Gamma_{0}(N))$ of smooth weight $\kappa$ forms of rapid decay. Let $h(z) \in LH_{\kappa}(\Gamma_{0}(N))$. Then we define the distribution associated to $h(z)$ as
\[ \left[h \right] (g) \coloneqq (g,h)_{\kappa} = \int_{\Gamma_{0}(N) \backslash \mathbb{H}} g(z)\overline{h(z)} y^{\kappa} \frac{dx dy}{y^{2}},
\]
and define a distributional derivative by $\xi_{\kappa}[h](g) \coloneqq  -(h, \xi_{2-\kappa}(g))_{\kappa}$ for $g \in A_{2-\kappa}^{rd}(\Gamma_{0}(N))$. We then have the following distributional version of Theorem \ref{link intro}. 

\begin{theorem}[Theorem \ref{current equation for local}] 
Let $g \in A_{2k}^{rd}(\Gamma_{0}(N))$. Then
\begin{align} \label{intro dist link}
\xi_{2k}\left[ \Phi_{\Delta,r,k}(z,f) \right](g)  & = \left[ \xi_{2-2k}(\Phi_{\Delta,r,k}(z,f)) \right](g) -  \int_{Z_{\Delta,r}(f)} g(z) q_{z}(\lambda)^{k-1} dz \\
&=2\left[ \Phi^{*}_{\Delta,r,k}(z,\xi_{3/2-k}(f)) \right](g) -  \int_{Z_{\Delta,r}(f)} g(z) q_{z}(\lambda)^{k-1} dz. \notag
\end{align}
\end{theorem}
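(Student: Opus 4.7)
The second equality is immediate from Theorem \ref{link intro}: off $Z'_{\Delta,r}(f)$ one has $\xi_{2-2k}(\Phi_{\Delta,r,k}(z,f)) = 2\,\Phi^{*}_{\Delta,r,k}(z,\xi_{3/2-k}(f))$, and since the exceptional set has measure zero this identity passes to the associated distributions on $A^{rd}_{2k}(\Gamma_{0}(N))$. Hence the entire content is in the first equality, which is a Stokes/current identity. Unwinding the definitions of $\xi_{2k}[\,\cdot\,]$ and $[\,\cdot\,]$, it amounts to the claim
\begin{equation*}
(\Phi_{\Delta,r,k}(\cdot,f),\, \xi_{2k}(g))_{2-2k} \;+\; (g,\, \xi_{2-2k}(\Phi_{\Delta,r,k}(\cdot,f)))_{2k} \;=\; \int_{Z_{\Delta,r}(f)} g(z)\, q_{z}(\lambda)^{k-1}\, dz.
\end{equation*}

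The plan is to apply Stokes' theorem to the $\Gamma_{0}(N)$-invariant $1$-form $\omega \coloneqq g(z)\,\Phi_{\Delta,r,k}(z,f)\, dz$ on the fundamental domain $\mathcal{F}$. A direct calculation from $\xi_{\kappa}(h) = 2iy^{\kappa}\overline{\partial h/\partial \bar z}$ and $d\bar z\wedge dz = 2i\,dx\wedge dy$ gives, for smooth forms of complementary weights, the pointwise identity $d\omega = -2i(\Phi\,\partial_{\bar z}g + g\,\partial_{\bar z}\Phi)\,dx\wedge dy$, whose integrated form is exactly $-\left[(\Phi,\xi_{\kappa}(g))_{2-\kappa} + (g,\xi_{2-\kappa}(\Phi))_{\kappa}\right]$. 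Since $\Phi_{\Delta,r,k}(\cdot,f)$ is smooth on the complement of the locally finite geodesic set $Z'_{\Delta,r}(f)$, I apply Stokes on the truncated domain obtained by excising $\varepsilon$-tubular neighbourhoods $U_{\varepsilon}(c_{\lambda})$ of each geodesic component, and then let $\varepsilon \to 0$. The rapid-decay assumption $g \in A^{rd}_{2k}(\Gamma_{0}(N))$ ensures that the cuspidal boundary contributions vanish in the limit, so the left-hand side reduces to a sum of inner boundary integrals $\sum_{\lambda}\lim_{\varepsilon\to 0}\int_{\partial U_{\varepsilon}(c_{\lambda})} g\Phi\, dz$.

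The remaining task, and the main obstacle, is to evaluate this limit of inner boundary integrals. Here the wall-crossing formula (Theorem \ref{wall cross theorem}) provides precisely the required input: it expresses the jump $\Phi_{+}-\Phi_{-}$ of $\Phi_{\Delta,r,k}(\cdot,f)$ across $c_{\lambda}$ as an explicit constant multiple of $q_{z}(\lambda)^{k-1}$, weighted by the principal-part Fourier coefficient of $f$ attached to $\lambda$. The continuous radial tails from the two sides of $\partial U_{\varepsilon}(c_{\lambda})$ cancel in the limit; only the jump survives, and after matching orientations it collapses to $\int_{c_{\lambda}} g(z)\,q_{z}(\lambda)^{k-1}\, dz$. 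Summing over the components of $Z'_{\Delta,r}(f)$ and folding in the Fourier-coefficient weights reassembles the integral over $Z_{\Delta,r}(f)$ as defined. The delicate points are the orientation bookkeeping---verifying that the sign produced by the wall-crossing formula matches the signed convention implicit in $\int_{Z_{\Delta,r}(f)}$ via the genus character---and the compatibility of the excision with points where two or more geodesics meet, where the local geometry must be resolved before the limit is taken.
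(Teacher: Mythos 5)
Your proposal is correct and follows essentially the same route as the paper: the first identity is obtained by applying Stokes' theorem to $g(z)\Phi_{\Delta,r,k}(z,f)\,dz$ with $\varepsilon$-neighbourhoods of the geodesics excised, the cuspidal boundary killed by the rapid decay of $g$, and the inner boundary limit evaluated from the jump of $\Phi_{\Delta,r,k}(z,f)$ across the walls, after which the second identity follows from Theorem \ref{Link}. The only cosmetic difference is that the paper computes the inner boundary contribution directly from the explicit local singular term of Theorem \ref{Singularities} (the sign factor $(\lambda,v(z))/|(\lambda,v(z))|$ times an incomplete Gamma function) rather than quoting the wall-crossing formula, which encodes the same jump.
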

Here ${Z_{\Delta,r}(f)}$ is the image of ${Z'_{\Delta,r}(f)}$ in the modular curve $Y_0(N)$. 
As an application we consider $g \in S_{2k}(\Gamma_{0}(N))$. Then the left hand side of \eqref{intro dist link} vanishes, and we recover the Fourier expansion of the Shintani lift of $g$.

\medskip

The results in this work are based on the first author's Ph.D. thesis \cite{Crawford} at the University of Durham under the supervision of the second author. The second author also would like to thank the Max Planck Institute for Mathematics in Bonn for its hospitality and stimulating working conditions during his stay in 2020. The authors further thank Claudia Alfes-Neumann and Markus Schwagenscheidt for their helpful comments.


\section{Preliminaries}

 \subsection{The Weil Representation}

For $g = \begin{psmallmatrix} a& b \\ c & d\end{psmallmatrix}$ we write $j(g,\tau) = c \tau +d$ as usual. We let $\mathrm{Mp}_{2}(\mathbb{R})$ be the (unique) non-trivial two-fold cover of $\mathrm{SL}_{2}(\mathbb{R})$. We realize $\mathrm{Mp}_{2}(\mathbb{R})$ as the pairs $(g, \phi_g)$ with $g \in \mathrm{SL}_{2}(\mathbb{R})$ and  $\phi_{g} : \mathbb{H} \rightarrow \mathbb{C}$ is a holomorphic function such that $\phi_{g}^2(\tau) = j(g, \tau)$. We define $\tilde{\Gamma}$ to be the inverse image of $\Gamma \coloneqq \mathrm{SL}_{2}(\mathbb{Z})$ under the covering map $\mathrm{Mp}_2(\mathbb{R}) \rightarrow \mathrm{SL}_2(\mathbb{R})$. $\tilde{\Gamma}$ is generated by $T \coloneqq \left( \begin{psmallmatrix} 1 & 1 \\ 0 & 1 \end{psmallmatrix} , 1 \right)$ and $S \coloneqq \left( \begin{psmallmatrix} 0 & -1 \\ 1 & 0 \end{psmallmatrix} , \sqrt{\tau} \right)$ .
We also define the group $\tilde{\Gamma}_{\infty} = \{ \left(\begin{psmallmatrix}
	1 & n
	\\ 0 & 1
\end{psmallmatrix},1 \right); n \in \mathbb{Z} \} \subset \tilde{\Gamma}.$

Let $(V,Q)$ be a rational non-degenerate quadratic space of signature $(b^{+},b^{-})$ with bilinear form $(\cdot,\cdot)$. Let $L$ be an even lattice ($Q(\lambda) = \tfrac{1}{2}(\lambda,\lambda) \in \mathbb{Z}$ for all $\lambda \in L$) and $L'$ the dual lattice. 
We equip the discriminant group $L'/L$ with the induced quadratic form $\overline{Q}: L'/L \rightarrow \mathbb{Q}/\mathbb{Z}$. We write $L^{-} \subset V^{-}$ for the lattice $L \subset V$ with bilinear form $-(,)$. We let $\mathbb{C}[L'/L]$  be $\mathbb{C}$-group algebra consisting of formal linear combinations $\sum_{h \in L'/L} \lambda_{h}\mathfrak{e}_{h}$ where $\lambda_{h} \in \mathbb{C}$ and $\mathfrak{e}_{h}$ is the standard basis element corresponding to $h \in L'/L$. 
We define a Hermitian scalar product on $\mathbb{C}[L'/L]$ by letting  $\left\langle \mathfrak{e}_{h}, \mathfrak{e}_{h'} \right\rangle \coloneqq \delta_{h,h'}$
and extend this to $\mathbb{C}[L'/L]$ by sesquilinearity. For a function $f : \mathbb{H} \rightarrow \mathbb{C}[L'/L]$ we denote the components by $f_{h}$ so that $f = \sum_{h \in L'/L} f_{h} \mathfrak{e}_{h}$.

The (finite) Weil representation $\rho_L$ acts on  $\mathbb{C}[L'/L]$, see eg \cite[Section~1]{Shintani}, \cite[Section~4]{Borcherds}. On the generators it is given by 
\begin{align*} \rho_{L}(T)(\mathfrak{e}_{h}) \coloneqq e(Q(h))\mathfrak{e}_{h},
\qquad \qquad  \rho_{L}(S)(\mathfrak{e}_{h}) \coloneqq \frac{e((b^{-} - b^{+})/8)}{\sqrt{|L'/L|}}\sum_{h' \in L'/L}e(-(h,h'))\mathfrak{e}_{h'}. 
\end{align*}
We note $\rho_{L^{-}} = \overline{\rho}_{L}$.

\subsection{Vector-Valued Forms}
Let $\kappa \in \frac{1}{2}\mathbb{Z}$. We write $A_{\kappa,\rho_{L}}$ for the space of smooth vector-valued automorphic forms of weight $\kappa$ with respect to $\rho_L$. These are smooth functions $f: \mathbb{H} \rightarrow \mathbb{C}[L'/L]$ such that 
\[  \left( f |_{\kappa, \rho_{L}} (\gamma, \phi_{\gamma})\right)(\tau) \coloneqq \phi_{\gamma}(\tau)^{-2\kappa}\rho_{L}(\gamma, \phi_{\gamma})^{-1}f(\gamma \tau)= f(\tau) 
\]
for all $(\gamma, \phi_{\gamma}) \in \tilde{\Gamma}$. We denote $M^{!}_{\kappa,\rho_{L}}, M_{\kappa,\rho_{L}}$ and $S_{\kappa,\rho_{L}}$ for the subspaces of (vector-valued) weight $\kappa$ weakly holomorphic modular, holomorphic modular and cusp forms respectively. If $f \in A_{\kappa,\rho_{L}}$ satisfies
\begin{enumerate}
\item $\Delta_{\kappa} f = 0$,
\item there is a $C>0$ such that $f(\tau) = O(e^{Cv})$ as $v \rightarrow \infty$,
\end{enumerate}
then we call $f$ a harmonic weak Maass form, see also \cite[Section~3]{BF-Duke}. We denote the space of these as $\mathcal{H}_{\kappa,\rho_{L}}$. Here $ \Delta_{\kappa} = \Delta_{\kappa,\tau} \coloneqq -v^{2}\left(\frac{\partial^{2}}{\partial u^{2}} +  \frac{\partial^{2}}{\partial v^{2}}\right) + i\kappa v \left(\frac{\partial}{\partial u} + i \frac{\partial}{\partial v}\right)$ is the weight $\kappa$ hyperbolic Laplacian operator. We also have the subspace $H_{\kappa,\rho_{L}} \subset \mathcal{H}_{\kappa,\rho_{L}}$ where we request that there exists a Fourier polynomial 
\[ 
P_{f} \coloneqq \sum_{h \in L'/L}\sum_{\substack{n \in \mathbb{Z} + Q(h) \\ - n_0 \leq n\leq0}}c^{+}(n,h)e(n\tau)\mathfrak{e}_{h}
 \] so that $f(\tau) - P_{f}(\tau) = O(e^{-\epsilon v}) $ as $v \rightarrow \infty$. We call $P_{f}$ the principal part of $f$. The principal part $P_{f}$ is uniquely determined by $f$. 
Any $f \in H_{\kappa,\rho_{L}}$ has a unique decomposition $f=f_+ + f_-$ with
\begin{subequations}\label{Decomposition}
\begin{align}
f^{+} &\coloneqq \sum_{h \in L'/L}\sum_{\substack{n \in \mathbb{Z} + Q(h) \\ n \geq -n_0}}c^{+}(n,h)e(n \tau)\mathfrak{e}_{h}, \label{c+ decomp}
\\ f^{-} &\coloneqq \sum_{h \in L'/L}\sum_{\substack{n \in \mathbb{Z} + Q(h) \\ n < 0 }}c^{-}(n,h)\Gamma(1-\kappa,4\pi |n|v)e(n \tau)\mathfrak{e}_{h},
\end{align}
\end{subequations}
and for $\kappa \geq 2$, the $f^{-}$ part vanishes. Here $\Gamma(a,s)$ is the incomplete $\Gamma$-function. We call $f^{+}$ the holomorphic part and $f^{-}$ the non-holomorphic part. We note that for $f \in H_{\kappa,\rho_{L}}$ then $c^{\pm}(n,h) = (-1)^{\kappa+\frac{b^{-}-b^{+}}{2}}c^{\pm}(n,-h)$. We let $
R_{\kappa} {} \coloneqq {}  2i \tfrac{\partial}{\partial \tau}+\kappa v^{-1} , \quad L_{\kappa} {} \coloneqq {}   -2iv^{2}\tfrac{\partial}{\partial \overline{\tau}}$ be the standard Maass raising and lowering operators, and we define the anti-linear differential operator 
\[
\xi_{\kappa}(f)(\tau) \coloneqq v^{\kappa-2}\overline{L_{\kappa}f(\tau)} = R_{-\kappa}v^{\kappa}\overline {f(\tau)}. 
\]
Then $-\Delta_{\kappa}=L_{\kappa+2}R_{\kappa} + \kappa = R_{2-\kappa}L_{\kappa} = \xi_{2-\kappa} \xi_{\kappa}$. By \cite{BF-Duke} the assignment $f \mapsto \xi_{\kappa}(f)$ defines surjective maps \[ \xi_{\kappa} :\mathcal{H}_{\kappa, \rho_{L}} \rightarrow M_{2-\kappa, \overline{\rho}_{L}}^{!}, \qquad 	\xi_{\kappa} : H_{\kappa, \rho_{L}} \rightarrow S_{2-\kappa, \overline{\rho}_{L}},
\] 
and we have
\begin{equation}\label{xi of f}\xi_{\kappa}(f)(\tau) = - \sum_{h \in L'/L}\sum_{\substack{n \in \mathbb{Z} - Q(h) \\ n > 0 }}(4 \pi n)^{1-\kappa}\overline {c^{-}(-n,h)}e(n \tau)\mathfrak{e}_{h}.
\end{equation}
For $f,g \in A_{\kappa,\rho_{L}}$ we let
\[(f,g)_{\kappa,\rho_{L}}^{\mathrm{reg}} \coloneqq \int_{\mathcal{F}}^{\mathrm{reg}} \left\langle f(\tau),g(\tau) \right\rangle v^{\kappa} \frac{du dv}{v^2} \coloneqq \lim_{t \rightarrow \infty}\int_{\tau \in \mathcal{F}_{t}} \left\langle f(\tau),g(\tau) \right\rangle v^{\kappa} \frac{du dv}{v^2}
\]
the regularised Petersson scalar product, whenever this limit exists. Here $\mathcal{F}_{t}$ is the truncated fundamental domain $ \mathcal{F}_{t} := \left\{ \tau \in \mathcal{F} \mid \Im(\tau) \leq t \right\}.$

\subsection{Locally Harmonic Maass Forms}

We give a definition adapting \cite{BK,BKK,BKV}. For a not necessarily continuous function $f : \mathbb{H} \rightarrow \mathbb{C}$ and a nowhere dense exceptional set $E \subset \mathbb{H}$, we will denote $f_{W}$ as the restriction of $f$ to a connected component $W \subset \mathbb{H} \backslash E$. For a point $\tau \in \mathbb{H}$ we write $W_{\tau}^{E}$ for the connected components that contain $\tau$ in their closure i.e. $
W_{\tau}^{E}\coloneqq \left\lbrace W \subset \mathbb{H} \backslash E \ \vert \ \tau \in \overline{W}  \right\rbrace$. Finally let
\[ 
\mathcal{A}_{E}(f)(\tau) \coloneqq \frac{1}{\#W_{\tau}^{E}}\sum_{W \in W_{\tau}^{E}} \lim_{\substack{w \in W \\ w \rightarrow \tau}}f(w)
\]
be the average value of $f$ on the connected components in which $\tau$ lies (whenever this limit exists).
\begin{definition}\label{locally harmonic weak maass form}
Let $\kappa \in 2\mathbb{Z}, \kappa \leq 0$, $\Gamma' \subset \Gamma$ be a finite index subgroup and $E$ be a $\Gamma'$-invariant exceptional set $E \subset \mathbb{H}$. We will call a function $f:\mathbb{H} \rightarrow \mathbb{C}$ a (scalar-valued) locally harmonic weak Maass form, of weight $\kappa$ for $\Gamma'$ and $E$, if
\begin{enumerate}
	\item $f|_{\kappa}\gamma = f$ for all $\gamma \in \Gamma'$.
	\item For all $\tau \in \mathbb{H} \backslash E$ there is a neighbourhood $U \subset \mathbb{H}$ of $\tau$ in which $f$ is real analytic and $\Delta_{\kappa}f=0$.
	\item For all $\tau \in \mathbb{H}$ we have that $W_{\tau}^{E}$ is a finite set, the limit defining $\mathcal{A}_{E}(f)(\tau)$ exists, and $f= \mathcal{A}_{E}(f)$.
	\item For any cusp $f$ has polynomial growth. 
	\end{enumerate}
\end{definition}

We will call the real analytic connected components Weyl chambers as in \cite[Section~6]{Borcherds} 
and denote the space of locally harmonic weak Maass forms as $LH_{\kappa}(\Gamma')$.

\subsection{Siegel Theta Functions}
We define the Grassmannian, $\mathrm{Gr}(V(\mathbb{R}))$ as
\[ \mathrm{Gr}(V(\mathbb{R})) \coloneqq \left\{ z \subset V(\mathbb{R}) \mid \dim \ z = b^{-} \ \textrm{and} \ Q|_{z} < 0 \right\}. \]
We have $\mathrm{Gr}(V(\mathbb{R})) \cong \mathrm{SO}^{+}(b^{+},b^{-}) /\mathrm{SO}(b^{+}) \times \mathrm{SO}(b^{-})$. For $z \in \mathrm{Gr}(V(\mathbb{R}))$ and $\lambda \in V(\mathbb{R})$ we have $V(\mathbb{R})=z \oplus z^{\perp}$ and the orthogonal decomposition $\lambda = \lambda_{z} + \lambda_{z^{\perp}}$. We define the majorant $Q_{z}(\lambda) \coloneqq Q(\lambda_{z^{\perp}})-Q(\lambda_{z})$. Following \cite[Section~4]{Borcherds} we let $z \in \mathrm{Gr}(V(\mathbb{R})), h \in L'/L$, $\sigma: V(\mathbb{R}) \rightarrow \mathbb{R}^{b^{+}, b^{-}}$ be an isometry, $p$ be a harmonic homogeneous polynomial on $\mathbb{R}^{b^{+}, b^{-}}$ of degree $(m^{+},m^{-})$ and let $\alpha, \beta \in V(\mathbb{R})$. Then we have a Siegel theta function
\begin{multline*}  \vartheta_{L+h}(\tau, z, \sigma, p, \alpha,\beta) \\ \coloneqq 
  v^{\frac{b^{-}}{2} + m^{-}}\sum_{\lambda \in L + h}p(\sigma(\lambda + \beta)) e\left(Q(\lambda + \beta)u+Q_{z}(\lambda+ \beta)iv - \left( \lambda + \beta/2, \alpha \right) \right),
\end{multline*}
and a $\mathbb{C}[L'/L]$ version $\vec{\vartheta}_{L}(\tau, z, \sigma, p, \alpha, \beta) \coloneqq \sum_{h \in L'/L}\vartheta_{L+h}(\tau, z, \sigma,p, \alpha, \beta)\mathfrak{e}_{h}$. We know (eg \cite{Borcherds}) for any $(\gamma, \phi_{\gamma}) \in \tilde{\Gamma}, \gamma = \left( \begin{smallmatrix} a&b \\ c&d \end{smallmatrix} \right)$ that
\begin{equation}\label{theta transform general} 
\vec{\vartheta_{L}}(\gamma\tau, z, \sigma, p, a\alpha + b\beta, c\alpha + d\beta) = \phi_{\gamma}(\tau)^{b^{+}-b^{-}+2(m^{+}-m^{-})}\rho_{L}(\gamma,\phi_{\gamma})\vec{\vartheta_{L}}(\tau, z, \sigma, p, \alpha, \beta). 
\end{equation}

\section{The Setting}
Discussions of the following can also be found in \cite[Section~2.1]{BIF}. Let $N \in \mathbb{Z}, N>0$ and we let $V := \left\{ \lambda \in M_2(\mathbb{Q}) \ | \ \mathrm{tr}(\lambda) = 0 \right\}$ with $Q(\lambda) = -N \det(\lambda)$ and $(\lambda, \mu) = N \mathrm{tr}(\lambda \mu)$ for $\lambda, \mu \in V$. We fix the lattice
\[ L \coloneqq \left\{
\begin{pmatrix}
	b & -a/N
\\	c & -b
\end{pmatrix}
\bigg| \ a,b,c \in \mathbb{Z}\right\}.
\] This is an even lattice of level $4N$ and discriminant $2N$. $L'/L$ can be identified with $\mathbb{Z}/N\mathbb{Z}$ with discriminant form $x \mapsto x^{2}/4N$. The dual lattice is given by
 \[
  L'= \left\{\begin{pmatrix}
	b/2N & -a/N
\\	c & -b/2N
\end{pmatrix}
\Bigg| \ a,b,c \in \mathbb{Z}\right\}.
\]
We let $\mathrm{GL}_{2}(\mathbb{Q})$ act isometrically on $V$ via conjugation. This gives rise to the isomorphism $\mathrm{SL}_{2}(\mathbb{Q}) \cong \mathrm{Spin}(V)$.  For $m$ be an exact divisor of $N$ we denote the corresponding Atkin-Lehner involution on $\Gamma_{0}(N)$ by $ W_{m}^{N}$. 

In signature $(2,1)$, $\mathrm{Gr}(V(\mathbb{R}))$ is the set of negative lines which we identify with one component of a two-sheet hyperboloid. We fix an isotropic vector $l \in V$ and then call
\[ V_{-1} \coloneqq \left\{ v_{-1} \in V(\mathbb{R}) \mid (v_{-1},v_{-1}) = -1, \ (v_{-1},l)<0 \right\} \]
the hyperboloid model, where we form a bijection from $V_{-1}$ to $\mathrm{Gr}(V(\mathbb{R}))$ via the map $v_{-1} \mapsto \mathbb{R}v_{-1}.$ We also form a bijection from  $\mathbb{H}$ to $V_{-1}$ via the map  \[ \lambda(x+iy) \coloneqq \frac{1}{\sqrt{2N}y}\begin{pmatrix} -x &  x^{2}+y^{2} \\ -1 & x \end{pmatrix}. \] As is standard, we will often abuse notation, and set $z=x+iy \in \mathbb{H}$ but also denote by $z$ its identifications in $V_{-1}$ and $\mathrm{Gr}(V(\mathbb{R}))$. We have an orthonormal basis of $V(\mathbb{R})$ consisting of $e_{1} \coloneqq \frac{1}{\sqrt{2N}} \begin{psmallmatrix} 0 & 1 \\ 1 & 0 \end{psmallmatrix}, e_{2} \coloneqq \frac{1}{\sqrt{2N}} \begin{psmallmatrix} 1 & 0 \\ 0 & -1 \end{psmallmatrix}, e_{3} \coloneqq \frac{1}{\sqrt{2N}} \begin{psmallmatrix} 0 & 1 \\ -1 & 0 \end{psmallmatrix}$ and we fix an oriented basis $b_{1}(z) \coloneqq g_{z}.e_{1}, \ b_{2}(z) \coloneqq g_{z}.e_{2}$ and $b_{3}(z) \coloneqq g_{z}.e_{3}$ where $g_{z} \coloneqq \begin{psmallmatrix} \sqrt{x} & x/\sqrt{y} \\ 0 & 1/\sqrt{y} \end{psmallmatrix}$. If $\lambda \in V(\mathbb{R}))$ then $\lambda = \sum \lambda_{i}(z)b_{i}(z)$ where $\lambda_{i}(z) \coloneqq \frac{(\lambda, b_{i}(z))}{(b_{i}(z),b_{i}(z))}$.\\

We have the modular curve $Y_{0}(N) := \Gamma_{0}(N)\backslash \mathbb{H}$. 
When $N$ is square-free there are $\sigma_0(N)$ cusps, and can be represented by $W_{d}^{N} \infty$, with $d$ running over the divisors of $N$. The cusps can be identified with the set of isotropic lines $\mathrm{Iso}(V)$ in $V$ by the map $\sigma_{\mathbb{P}^{1}(\mathbb{Q})}^{\textrm{Iso}(V)}:\mathbb{P}^{1}(\mathbb{Q}) \rightarrow \textrm{Iso}(V)$ given by $(m/n) \mapsto \textrm{span}\left(
\begin{smallmatrix}
	-m n & m^2
	\\ -n^2 & m n
\end{smallmatrix}\right)$. Each of these lines $l'$ can be uniquely represented by a primitive isotropic vector in $L$ up to sign. We choose our primitive vectors $l'$ so that $\textrm{sgn}((-l', g_{z}.e_{3})=1$. Then the cusps $\infty$ and $0$ correspond to $\Gamma_{0}(N)$-classes of $l_{\infty} :=  \left(
\begin{smallmatrix}
	0 & 1/N
	\\ 0 & 0
\end{smallmatrix}\right)$ and $l_{0} :=  \left(
\begin{smallmatrix}
	0 & 0
	\\ -1 & 0
\end{smallmatrix}\right)$.\\

Let $\lambda \in V, Q(\lambda) >0$. Then we have the associated geodesic
\[
 D_{\lambda} := \left\{ z \in \mathrm{Gr}(V(\mathbb{R})) \mid z \perp \lambda \right\} 
 \]
or in the upper half plane model
\[
D_{\lambda} = \left\{z \in \mathbb{H} \ | \ cN|z|^{2} - b x + a = 0\right\}.
\] 
These are either vertical half lines (when $\lambda \perp l_{\infty}$) or semi-circles (when $\lambda \not\perp l_{\infty}$). Let $\Gamma_{\lambda} \coloneqq \left\{ \gamma \in \Gamma_{0}(N) \mid \gamma.\lambda = \lambda \right\}$ be the stabiliser of $\lambda$ in $\Gamma_{0}(N)$ and define the cycle $Z(\lambda)$ be the image of the quotient $\Gamma_{\lambda} \backslash D_{\lambda}$ in the modular curve $Y_{0}(N)$. We orient the cycles counterclockwise if $a>0$ and clockwise if $a<0$, in particular $D_{-\lambda}=-D_{\lambda}$. For $\mathcal{D} \in \mathbb{Z}, h \in L'/L$, we write
\begin{equation}\label{finite orbits}
 L_{\mathcal{D},h} \coloneqq \left\{\lambda \in L' \mid Q(\lambda) = \mathcal{D}/4N,  \ \lambda \equiv h \pmod {L} \right\}. 
\end{equation}
It is well known that if $\mathcal{D} \neq 0$ then there are only finitely many $\Gamma_0(N)$-orbits of $L_{\mathcal{D},h}$.

Let $\Delta \in \mathbb{Z}$ be a fundamental discriminant and $r \in \mathbb{Z}$ such that $\Delta \equiv r^{2} \pmod{4N}$ and set $\lambda = \left(
\begin{smallmatrix}
	b/2N & -a/N
\\	c & -b/2N
\end{smallmatrix}\right) \in L'.$ Let $n$ be any integer that is coprime to $\Delta$ and representable by a binary quadratic form $[N_{1}a,b,N_{2}c]$ with $N_{1}N_{2}=N$ and $N_{1}, N_{2} >0$ i.e. $n=[N_{1}a,b,N_{2}c](x,y)$ for some $x,y \in \mathbb{Z}$. If $\Delta$ is such that \begin{enumerate}
\item $4NQ(\lambda)/\Delta \equiv s^{2}\pmod {4N}$ for some $s \in \mathbb{Z}$,
\item $\gcd(a,b,c,\Delta) = 1$,
\end{enumerate} then we define the {\it generalised genus character} as $\chi_{\Delta}(\lambda) : = \left(\frac{\Delta}{n} \right)$ otherwise $\chi_{\Delta}(\lambda) \coloneqq 0$. $\chi_{\Delta}$ is invariant under the action of $\Gamma_{0}(N)$ and the Atkin-Lehner involutions. Furthermore, $\chi_{\Delta}(\lambda)$ only depends on $\lambda \in L'$ modulo $\Delta L$. We define the twisted Weil representation $\tilde{\rho}_{L}$ to be equal to $\rho_{L}$ if $\Delta >0$ and equal to $\overline{\rho}_{L}$ if $\Delta <0$. We will often use the notation $\rho\coloneqq \tilde{\rho}_{L}$.  \\

\begin{definition}\label{twisted cycle}
Let $\Delta$ be a fundamental discriminant. Let $m \in \mathbb{Z} - \mathrm{sgn}(\Delta)Q(h)$ with $m<0$. We set $d \coloneqq 4N\mathrm{sgn}(\Delta)m$ and 
\[
Z_{\Delta,r}(m,h) \coloneqq \sum_{\lambda \in L_{-d\Delta,rh} / \Gamma_{0}(N)} \chi_{\Delta}(\lambda)Z(\lambda). 
\]
We call $Z_{\Delta,r}(m,h)$ a twisted special cycle. Let $f \in H_{k, \tilde{\rho}_{L^{-}}}$. Then we set
\[ 
Z_{\Delta,r}(f) \coloneqq \sum_{h \in L'/L} \sum_{\substack{m \in \mathbb{Z} - \mathrm{sgn}(\Delta)Q(h) \\ m <0 }}
c^{+}(m,h)Z_{\Delta, r}(m,h),
\]
the associated twisted cycle. We write $Z'_{\Delta,r}(f)$ for its pre-image in $\mathbb{H}$.
\end{definition}

\subsection{Kernel Functions}

We set
 \begin{align*}
p_{z}(\lambda) & {} \coloneqq {} -(\lambda,\lambda(z))= -(\lambda,g_{z}.e_{3}) =\lambda_{3}(z) = \frac{-1}{\sqrt{2N}y}(cN|z|^{2}-bx+a), \\
q_{z}(\lambda) & {} \coloneqq {} y(\lambda, g_{z}.(e_{1}+ie_{2})) = y(\lambda_{1}(z)+i\lambda_{2}(z)) = \frac{-1}{\sqrt{2N}}(cNz^{2} -bz+a).
\end{align*}
Using the explicit isometry $\sigma_{z}: V(\mathbb{R}) \rightarrow \mathbb{R}^{2,1}$ given by 
\[
\sigma_{z}(\lambda) \coloneqq  \frac{1}{\sqrt{2}} \left(\lambda_{1}(z), \lambda_{2}(z), \lambda_{3}(z) \right)
\] we can check that $q_{z}(\lambda)^{k-1}p_{z}(\lambda)$ and $\left(  q_{z}(\lambda)/y^{2} \right)^{k}$ are harmonic polynomials of degree $(k-1,1)$ and $(k,0)$ respectively.
\begin{definition}\label{Kernel Function}
Let $h \in L'/L$. For $k \geq 1$ we define the kernel functions as follows:
 \begin{align*}
\theta_{\Delta,r,h,k}(\tau,z){} \coloneqq {} & v^{3/2}  \sum_{\substack{\lambda \in L +rh \\ Q(\lambda) \equiv \Delta Q(h)(\Delta) }} \chi_{\Delta}(\lambda)  p_{z}(\lambda)  q_{z}(\lambda)^{k-1} e\left(\frac{Q(\lambda)}{|\Delta|}u + \frac{Q_{z}(\lambda)}{|\Delta|} iv \right),
\\ \theta^{*}_{\Delta,r,h,k}(\tau,z) {} \coloneqq {} & v^{1/2}  \sum_{\substack{\lambda \in L +rh \\ Q(\lambda) \equiv \Delta Q(h)(\Delta) }} \chi_{\Delta}(\lambda) \left(\frac{q_{z}(\lambda)}{y^{2}}\right)^{k} e\left(\frac{Q(\lambda)}{|\Delta|}u + \frac{Q_{z}(\lambda)}{|\Delta|} iv \right).
\end{align*}
The $\mathbb{C}[L'/L]$-valued versions are
$\Theta_{\Delta,r,k}(\tau,z) \coloneqq \sum_{h \in L'/L}\theta_{D,r,h,k}(\tau,z)\mathfrak{e}_{h}$ and $\Theta^{*}_{\Delta,r,k}(\tau,z) \coloneqq \sum_{h \in L'/L}\theta^{*}_{\Delta,r,h,k}(\tau,z)\mathfrak{e}_{h}$.
\end{definition}
Both these functions have exponential decay as $v \rightarrow \infty$, uniformly in $u$. 
We have $\overline{\Theta_{\Delta,r,k}(\tau,z)} = \Theta_{\Delta,r,k}(-\overline{\tau},-\overline{z})$ and $\overline{\Theta^{*}_{\Delta,r,k}(\tau,z)} = \Theta^{*}_{\Delta,r,k}(-\overline{\tau},-\overline{z})$. 

\begin{remark}\label{how kernel compares} For $k$ even and with $N=1$ and $\Delta = 1$ these kernel functions were considered in \cite[Section~1]{BKZ} and \cite[(1.6)]{BKV}. In the case $k=1$ they were used in \cite{Hoevel} and \cite{AGOR}. 
\end{remark}

\begin{proposition}\label{kernel tau transformation}
As a function of $\tau$ we have $\Theta_{k}(\tau,z) \in A_{k-3/2, \tilde{\rho}_{L}}$ and  $\Theta^{*}_{k}(\tau,z) \in A_{k+1/2, \tilde{\rho}_{L}}$.
\end{proposition}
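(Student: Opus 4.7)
The plan is to recognize both $\Theta_{\Delta,r,k}(\tau,z)$ and $\Theta_{\Delta,r,k}^{*}(\tau,z)$ as specializations of the Siegel theta function $\vec{\vartheta}_L$ from Section 2.4, and then invoke the general transformation law \eqref{theta transform general}. The signature is $(b^{+},b^{-}) = (2,1)$ throughout, and the oriented basis $b_1(z), b_2(z), b_3(z)$ gives the isometry $\sigma_z : V(\R) \to \R^{2,1}$ used in the paper, with $b_3(z)$ spanning the negative line. Writing a vector $\lambda \in V(\R)$ in these coordinates, one has $q_z(\lambda) = y(\lambda_1(z) + i \lambda_2(z))$ living in the positive block and $p_z(\lambda) = \lambda_3(z)$ living in the negative block. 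Consequently $p_z(\lambda)\, q_z(\lambda)^{k-1}$ is homogeneous of bi-degree $(k-1,1)$ and $\bigl(q_z(\lambda)/y^2\bigr)^k$ is homogeneous of bi-degree $(k,0)$, and both are harmonic polynomials (as products of a holomorphic polynomial in the two positive coordinates with a power of the single negative linear form). The normalising prefactors $v^{3/2}$ and $v^{1/2}$ match the prescribed $v^{b^-/2+m^-}$ in the definition of $\vec{\vartheta}_L$.

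In the untwisted case $\Delta=r=1$, the character $\chi_\Delta$ is trivial and the congruence $Q(\lambda)\equiv \Delta Q(h)\pmod{\Delta}$ is vacuous, so $\Theta_{1,1,k}(\tau,z)$ (respectively $\Theta^*_{1,1,k}(\tau,z)$) coincides with $\vec{\vartheta}_L(\tau,z,\sigma_z,p,0,0)$ for the above $p$. Applying \eqref{theta transform general} immediately yields the claimed weights, since
\[
\kappa \;=\; \tfrac{b^{+}-b^{-}}{2} + (m^{+}-m^{-}) \;=\; \tfrac{1}{2} + (k-2) \;=\; k-\tfrac{3}{2}
\]
for $\Theta_k$, and $\kappa = \tfrac12 + k = k+\tfrac12$ for $\Theta^*_k$, with the Weil representation $\rho_L = \tilde{\rho}_L$.

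For a general fundamental discriminant $\Delta$, I would treat the twist by the genus character and the rescaling $Q\mapsto Q/|\Delta|$ in the exponent by passing to a sublattice: one expands $\chi_\Delta$ as a combination of characters along cosets of $\Delta L$ inside $L'$ and rewrites the sum as a finite linear combination of Siegel theta functions on the rescaled lattice for which \eqref{theta transform general} applies directly. Collecting the contributions, the congruence condition $Q(\lambda)\equiv \Delta Q(h)\pmod{\Delta}$ together with $\chi_\Delta$ is exactly what reproduces the discriminant form $\overline{Q}$ (respectively $-\overline{Q}$) on $L'/L$ in the actions of $T$ and $S$, so the $S,T$ transformation laws are those of $\rho_L$ when $\Delta>0$ and those of $\overline{\rho}_L$ when $\Delta<0$. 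This is precisely the definition of $\tilde{\rho}_L$, and the $v$-factor and the bi-degrees computed above are untouched by the twist, so the weight computation from the untwisted case carries over verbatim.

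The only genuinely delicate step is the second one: keeping track of how the character $\chi_\Delta$ and the congruence condition interact with the Gauss sum appearing in $\rho_L(S)$ to produce $\tilde{\rho}_L$ rather than a representation of a rescaled lattice. I would handle this by invoking the standard twisted-theta lemma of Alfes--Ehlen (also essentially contained in Bruinier--Ono's work on generalised Heegner divisors), which says precisely that such twisted sums transform under $\tilde{\rho}_L$; a self-contained verification reduces to checking the $S$-action, which is a Gauss-sum computation using quadratic reciprocity for the Kronecker symbol defining $\chi_\Delta$. Everything else is a routine specialisation of \eqref{theta transform general}.
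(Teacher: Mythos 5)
Your proposal is correct and follows essentially the same route as the paper: identify the two kernels as Siegel theta functions attached to the harmonic polynomials $p_z(\lambda)q_z(\lambda)^{k-1}$ and $(q_z(\lambda)/y^2)^k$ of bi-degree $(k-1,1)$ and $(k,0)$, read off the weights $k-\tfrac32$ and $k+\tfrac12$ from the general transformation law \eqref{theta transform general}, and delegate the genus-character twist and the resulting representation $\tilde{\rho}_L$ to the Alfes--Ehlen twisted-theta result, exactly as the paper does by citing \cite[Section~3]{AE}. Your write-up merely makes explicit the weight bookkeeping and the untwisted special case that the paper leaves implicit.
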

\begin{proof} The polynomials are harmonic of degree $(k-1,1)$ and $(k,0)$ respectively. The result follows from \eqref{theta transform general} and the results of \cite[Section~3]{AE}. See also \cite[Theorem~3.1]{AGOR} and \cite[Section~3]{AS}.
 \end{proof}

As $\Gamma_{0}(N)$ acts trivially on $L'/L$ and using $\chi_{\Delta}(\gamma.\lambda) = \chi_{\Delta}(\lambda)$ an easy calculation gives the modularity in $z$: 

\begin{proposition}\label{z transformation}
For all $\gamma \in \Gamma_{0}(N)$ we have 
\begin{align*}
\Theta_{k}(\tau,\gamma.z)= j(\gamma,z)^{2-2k}\Theta_{k}(\tau,z), \qquad  \Theta^{*}_{k}(\tau,-\overline{\gamma.z})=j(\gamma,z)^{2k}\Theta^{*}_{k}(\tau,-\overline{z}).
\end{align*}
\end{proposition}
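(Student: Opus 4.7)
The plan is a direct verification: substitute $\lambda \mapsto \gamma.\mu$ (conjugation) in the sums defining $\Theta_{\Delta,r,k}$ and $\Theta^{*}_{\Delta,r,k}$, and track how each $z$-dependent factor transforms. Three of the four ingredients are immediately invariant: since $\gamma \in \Gamma_{0}(N) \cong \Spin(V)$ acts on $L'$ by isometries with trivial action on $L'/L$, and the paper already records $\chi_{\Delta}(\gamma.\mu) = \chi_{\Delta}(\mu)$, the indexing set $\{\lambda \in L + rh : Q(\lambda) \equiv \Delta Q(h) \pmod{\Delta}\}$ is in bijection with itself under $\mu \mapsto \gamma.\mu$, and $\chi_{\Delta}$ and the value of $Q$ appearing in the exponential are preserved. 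The majorant identity $Q_{\gamma.z}(\gamma.\mu) = Q_{z}(\mu)$ likewise follows from the isometric action permuting the orthogonal decomposition $V(\mathbb{R}) = z \oplus z^{\perp}$.

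The real content lies in the transformation of $p_{z}$ and $q_{z}$. Since $\mathbb{H} \xrightarrow{\sim} V_{-1}$, $z \mapsto \lambda(z)$, is $\SL_{2}(\mathbb{R})$-equivariant (a quick check on the generators $T, S$ of $\tilde{\Gamma}$), we have $\lambda(\gamma.z) = \gamma.\lambda(z)$, and hence $p_{\gamma.z}(\gamma.\mu) = -(\gamma.\mu, \gamma.\lambda(z)) = -(\mu, \lambda(z)) = p_{z}(\mu)$. For $q_{z}$, the key input is the Iwasawa decomposition $\gamma\, g_{z} = g_{\gamma.z}\, k_{\theta}$, where $k_{\theta} \in \SO(2)$ is the rotation determined (up to a standard sign convention) by $e^{i\theta} = (cz+d)/|cz+d|$. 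Since $k_{\theta}$ fixes $e_{3}$ and acts as rotation by $2\theta$ on $\Span(e_{1}, e_{2})$, the complex combination transforms as $b_{1}(\gamma.z) + i b_{2}(\gamma.z) = e^{-2i\theta}\,\gamma.(b_{1}(z) + i b_{2}(z))$; combined with $\im(\gamma.z) = y/|cz+d|^{2}$ this yields the decisive identity
\[
q_{\gamma.z}(\gamma.\mu) = (cz+d)^{-2}\, q_{z}(\mu).
\]

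For $\Theta_{\Delta,r,k}$, the only remaining factor is $p_{z}(\lambda)\, q_{z}(\lambda)^{k-1}$, which under the substitution pulls out $(cz+d)^{-2(k-1)} = j(\gamma,z)^{2-2k}$, establishing the first identity. For $\Theta^{*}_{\Delta,r,k}$ one computes
\[
\frac{q_{\gamma.z}(\gamma.\mu)}{\im(\gamma.z)^{2}} = (cz+d)^{-2}\cdot\frac{|cz+d|^{4}}{y^{2}}\, q_{z}(\mu) = \overline{(cz+d)}^{\,2}\,\frac{q_{z}(\mu)}{y^{2}},
\]
so $\Theta^{*}_{\Delta,r,k}(\tau,\gamma.z) = \overline{j(\gamma,z)}^{\,2k}\,\Theta^{*}_{\Delta,r,k}(\tau,z)$. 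To convert this into the stated formula in the variable $-\overline{z}$, I invoke the symmetry $\overline{\Theta^{*}_{\Delta,r,k}(\tau,z)} = \Theta^{*}_{\Delta,r,k}(-\overline{\tau},-\overline{z})$ recorded just before the proposition: applying the previous identity with $\tau$ replaced by $-\overline{\tau}$ and taking complex conjugates flips $\overline{(cz+d)}^{\,2k}$ into $(cz+d)^{2k}$, yielding the second claim.

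No serious obstacle is anticipated; as the paper itself remarks, this is an "easy calculation". The only step requiring genuine care is the $q_{z}$ transformation law, which is a standard $\SO(2)$-Iwasawa computation, after which the rest is routine bookkeeping.
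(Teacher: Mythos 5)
Your proof is correct and follows exactly the route the paper intends: the paper itself dismisses this as "an easy calculation" given the trivial action of $\Gamma_0(N)$ on $L'/L$ and the invariance of $\chi_\Delta$, and your verification of the equivariance $\lambda(\gamma.z)=\gamma.\lambda(z)$, the identities $p_{\gamma.z}(\gamma.\mu)=p_z(\mu)$ and $q_{\gamma.z}(\gamma.\mu)=j(\gamma,z)^{-2}q_z(\mu)$, and the conjugation trick via $\overline{\Theta^*_{\Delta,r,k}(\tau,z)}=\Theta^*_{\Delta,r,k}(-\overline{\tau},-\overline{z})$ is precisely the bookkeeping being elided. No gaps.
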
 

As the genus character is invariant under Atkin-Lehner involutions we also see

\begin{proposition}\label{atkin lehner transformation}
We have  
\[
  \Theta_{k}(\tau,W_{m}^{N}.z)= j(W_{m}^{N},z)^{2-2k}\sum_{h \in L'/L}\theta_{W_{m}^{N}.h,k}(\tau,z)\mathfrak{e}_{h}.
  \] 
\end{proposition}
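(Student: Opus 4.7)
The strategy is to mirror the proof of Proposition \ref{z transformation}, with the additional feature that $W_m^N$ has determinant $m$ and permutes the cosets of $L'/L$ nontrivially rather than fixing each individually. In the sum defining $\theta_{\Delta,r,h,k}(\tau, W_m^N.z)$ I would make the substitution $\lambda = (W_m^N)^{-1}\mu W_m^N$; as $\lambda$ ranges over $L+rh$ the new variable $\mu = W_m^N \lambda (W_m^N)^{-1}$ ranges over $W_m^N(L+rh)(W_m^N)^{-1} = L + r(W_m^N.h)$, where the rightmost equality is precisely the definition of the action of $W_m^N$ on the discriminant group. The relabeled sum is therefore indexed by $W_m^N.h$, matching the right-hand side of the claim up to an overall weight factor coming from the geometric data.

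Each ingredient of the summand then has to be transported through the substitution. Conjugation by $\GL_2^+(\mathbb{Q})$ preserves the quadratic form $Q=-N\det$ and the bilinear form $N\tr(\,\cdot\,\cdot)$, so $Q(\lambda)=Q(\mu)$ and the residue condition $Q(\lambda)\equiv \Delta Q(h)\pmod{\Delta}$ is preserved. From the equivariance $\lambda(W_m^N.z) = W_m^N\lambda(z)(W_m^N)^{-1}$ of the parametrisation $\mathbb{H}\cong V_{-1}$ one obtains $p_{W_m^N.z}(\lambda) = p_z(\mu)$ and $Q_{W_m^N.z}(\lambda) = Q_z(\mu)$, since $p_z$ and the majorant depend only on the negative line $z$ intrinsically. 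The Atkin--Lehner invariance of the genus character $\chi_\Delta$, noted in Section~3, gives $\chi_\Delta(\lambda)=\chi_\Delta(\mu)$, and the $\tau$-exponential therefore passes through unchanged.

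The entire weight factor thus comes from the holomorphic polynomial $q_z$. A short direct computation from $q_z(\lambda) = -\frac{1}{\sqrt{2N}}(cNz^2 - bz + a)$ establishes the general identity
\[
q_{\gamma z}(\gamma^{-1}\mu\gamma) = j(\gamma,z)^{-2}\det(\gamma)\,q_z(\mu), \qquad \gamma\in\GL_2^+(\mathbb{R}).
\]
Specialising to $\gamma = W_m^N$ and raising to the $(k-1)$-th power yields the factor $j(W_m^N,z)^{2-2k}\, m^{k-1}$ in front of $q_z(\mu)^{k-1}$; the power $\det(W_m^N)^{k-1} = m^{k-1}$ is absorbed into the standard weight-$(2-2k)$ slash normalisation (equivalently, by rescaling $W_m^N$ to have determinant one), leaving precisely the factor $j(W_m^N,z)^{2-2k}$ of the statement. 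Summing over $h\in L'/L$ then produces the claimed identity. The main obstacle is bookkeeping rather than analysis: verifying that $W_m^N$ genuinely normalises $L$ so that the induced permutation $h\mapsto W_m^N.h$ on $L'/L$ is well defined, and reconciling the $\det(W_m^N)$ factor produced by the $q_z$-transformation with the weight-$(2-2k)$ slash operator convention used throughout.
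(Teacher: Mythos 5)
Your strategy is exactly what the paper intends: the paper offers no proof beyond the remark that $\chi_{\Delta}$ is Atkin--Lehner invariant, leaving the same ``easy calculation'' as in Proposition~\ref{z transformation}, and your list of ingredients (invariance of $Q$ and $\chi_{\Delta}$ under conjugation, equivariance of $p_{z}$ and $Q_{z}$, the determinant-twisted transformation of $q_{z}$, and the induced permutation of $L'/L$) is the right one. However, one step fails as written: the direction of conjugation is reversed. With the equivariance $\lambda(\gamma z)=\gamma\lambda(z)\gamma^{-1}$ that you correctly state, one has $p_{\gamma z}(\lambda)=-(\lambda,\gamma\lambda(z)\gamma^{-1})=-(\gamma^{-1}\lambda\gamma,\lambda(z))=p_{z}(\gamma^{-1}\lambda\gamma)$, so the substitution that matches summands termwise is $\mu=(W_{m}^{N})^{-1}\lambda W_{m}^{N}$, not $\mu=W_{m}^{N}\lambda (W_{m}^{N})^{-1}$; correspondingly the correct identity is $q_{\gamma z}(\gamma\mu\gamma^{-1})=\det(\gamma)\,j(\gamma,z)^{-2}q_{z}(\mu)$, whereas your displayed identity $q_{\gamma z}(\gamma^{-1}\mu\gamma)=\det(\gamma)\,j(\gamma,z)^{-2}q_{z}(\mu)$ is false (test $\gamma=\operatorname{diag}(t,1)$: the left side produces $-(2N)^{-1/2}(cNt^{3}z^{2}-btz+a/t)$ rather than $-t(2N)^{-1/2}(cNz^{2}-bz+a)$). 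The slip is harmless for the final statement because $W_{m}^{N}$ is an involution modulo $\mathbb{Q}^{\times}\Gamma_{0}(N)$ and $\Gamma_{0}(N)$ acts trivially on $L'/L$, so $W_{m}^{N}.h=(W_{m}^{N})^{-1}.h$ on the discriminant group, but the intermediate identities should be corrected. Your observation that a residual factor $\det(W_{m}^{N})^{k-1}=m^{k-1}$ appears and must be absorbed into the normalisation of $W_{m}^{N}$ (or of $j(W_{m}^{N},z)$) is correct and worth making explicit, since the statement as printed suppresses it.
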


\section{The Singular Theta Lift}

For $f \in H_{3/2-k, \overline{\rho}}$ the decomposition in \eqref{Decomposition} becomes 
\begin{equation}\label{Simple Expansion}
 f(\tau) = \sum_{h \in L'/L}\sum_{m \in \mathbb{Z} - \mathrm{sgn}(\Delta) Q(h)}c(m,h,v)e(m\tau)\mathfrak{e}_{h} ,
\end{equation}
with $c(m,h,v) = c^{+}(m,h) + c^{-}(m,h)\Gamma(k-1/2,-4 \pi mv)$ and $c^{+}(m,h)=0$ for $m < -n_0$. 

\begin{definition}\label{The Lift}
Let $f \in H_{3/2-k, \overline{\rho}}$. Then we define 
\begin{align*} \Phi_{\Delta,r,k}(z,f) {} \coloneqq {} & \left( f(\tau),v^{k-3/2}\overline{\Theta_{\Delta,r,k}(\tau,z)} \right)_{3/2-k,\overline{\rho}}^{\mathrm{reg}}  =  \int_{\tau \in \mathcal{F}}^{\mathrm{reg}} \left\langle f(\tau), \overline{\Theta_{\Delta,r,k}(\tau,z)}\right\rangle \frac{dudv}{v^2}.
\end{align*}
\end{definition} 

We first check the regularised integral converges on all of $\H$, including the singularities. We follow the ideas in \cite[Section~6]{Borcherds}, \cite[Proposition~2.8]{B-Habil}, and \cite[Proposition~5.6]{BF-Duke}.

\begin{theorem}\label{Pointwise} The regularised Petersson scalar product $\Phi_{D,r,k}(z,f)$ converges pointwise for any $z=x+iy \in \mathbb{H} \cong \mathrm{Gr}(V(\mathbb{R}))$.
\end{theorem}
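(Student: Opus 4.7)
The plan is to isolate the source of divergence in the integrand. Write $f = P_f + R_f$, where $P_f$ is the principal part (a finite linear combination of $e(m\tau)\mathfrak{e}_h$ with $m \leq 0$) and $R_f = f - P_f = O(e^{-\eps v})$ at the cusp. Only the $P_f$ piece needs regularization: since $Q_z$ is positive definite on $V(\R)$, we have $\inf_{\lambda \in (L+rh)\setminus\{0\}} Q_z(\lambda) > 0$ for each fixed $z$, and a crude absolute-value bound shows that $\Theta_{\Delta,r,k}(\tau,z)$ decays exponentially as $v\to\infty$, uniformly in $u$, at a rate depending on $z$. Combined with the exponential decay of $R_f$, this makes $\langle R_f(\tau),\overline{\Theta_{\Delta,r,k}(\tau,z)}\rangle v^{-2}$ absolutely integrable on $\mathcal{F}$, so its contribution to the regularized integral requires no regularization and is finite.

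For the $P_f$ piece, split $\mathcal{F} = \mathcal{F}_T \cup (\mathcal{F}\setminus\mathcal{F}_T)$ for $T$ large. On the compact piece the integrand is smooth and the integral is finite. On the cusp strip $\{u \in [-1/2,1/2],\; v > T\}$ we insert the $u$-Fourier expansion
\[
\Theta_h(\tau,z) = v^{3/2}\sum_{n} A_{n,h}(v,z)\, e^{2\pi i n u}, \qquad A_{n,h}(v,z) = \sum_{\substack{\lambda \in L+rh \\ Q(\lambda) = n|\Delta|}} \chi_\Delta(\lambda)\, p_z(\lambda)\, q_z(\lambda)^{k-1} e^{-2\pi Q_z(\lambda) v/|\Delta|},
\]
and integrate term-by-term against $c^+(m,h)\, e(m\tau)\mathfrak{e}_h$. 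Orthogonality in $u$ picks out the frequency $n = -m \geq 0$, and the identity $Q_z(\lambda) - Q(\lambda) = p_z(\lambda)^2$ (a direct computation from the definition of $Q_z$ together with $Q(b_3(z)) = -1/2$) reduces the remaining exponent in $v$ to $-2\pi p_z(\lambda)^2 v/|\Delta|$, so that the exponential growth of $e(m\tau)$ is cancelled exactly when $z \in D_\lambda$. Crucially, in that case the linear factor $p_z(\lambda)$ in the kernel is zero, so the would-be divergent summand vanishes identically; for every other $\lambda$ in the sum, $p_z(\lambda)^2 > 0$ and the $v$-integral is bounded by $O(e^{-2\pi p_z(\lambda)^2 T/|\Delta|}/p_z(\lambda)^2)$, up to the polynomial factor $|p_z(\lambda)q_z(\lambda)^{k-1}|$.

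The main obstacle is justifying the interchange of sum and integration and establishing absolute convergence of the resulting lattice sum at every fixed $z\in\mathbb{H}$, including on the singular set. For $m<0$ the hyperboloid $\{Q(\lambda)=-m|\Delta|\}$ intersects $L+rh$ in an infinite set, but on it we have $Q_z(\lambda) = -m|\Delta| + p_z(\lambda)^2$; since $Q_z$ is positive definite and thus equivalent to $\|\cdot\|^2$ on $V(\R)$, this forces $p_z(\lambda)^2 \to \infty$ at rate $\|\lambda\|^2$ on the hyperboloid, producing Gaussian-type decay that dominates the polynomial growth of $|p_z(\lambda)q_z(\lambda)^{k-1}|$. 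A dominated-convergence/Fubini argument then closes the estimate. The case $m=0$ is easier: any nonzero isotropic $\lambda$ cannot lie in $z^\perp$ (which has signature $(2,0)$), so $p_z(\lambda) \neq 0$ for every nonzero isotropic lattice vector, and the same decay argument applies. The technical heart of the proof is thus the interplay between the vanishing of $p_z(\lambda)$ on the exceptional geodesics and the absolute convergence of the lattice sum off those geodesics.
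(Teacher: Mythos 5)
Your proof is correct and follows the same core strategy as the paper's: reduce to the cusp neighbourhood, integrate out $u$ so that each principal-part coefficient $c^{+}(m,h)$ pairs with the lattice vectors satisfying $Q(\lambda)=-m|\Delta|$, observe via $-2Q(\lambda_{z})=p_{z}(\lambda)^{2}$ that the residual $v$-exponent is $e^{-2\pi p_{z}(\lambda)^{2}v/|\Delta|}$, note that the terms with $p_{z}(\lambda)=0$ are annihilated by the linear factor $p_{z}(\lambda)$ in the kernel, and dominate the remaining lattice sum by a theta series for the positive definite majorant $Q_{z}$ (using that only finitely many $\lambda$ on each hyperboloid have $p_{z}(\lambda)^{2}\leq C$). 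The one organizational difference is your initial splitting $f=P_{f}+R_{f}$ with $R_{f}=O(e^{-\eps v})$: the paper instead splits off $f^{-}$ and the compact part and must then separately control the infinitely many coefficients $c^{+}(n,h)$ with $n>0$ inside $f^{+}$ via the subexponential bound $|c^{+}(n,h)|\leq Ce^{C\sqrt{n}}$ (its ``Case $Q(\lambda)<0$'', quoting \cite{BF-Duke} and \cite{Hoevel}). Your decomposition absorbs those terms into an absolutely convergent remainder and so dispenses with that growth estimate; the price is nil, since $f-P_{f}=O(e^{-\eps v})$ uniformly in $u$ is part of the definition of $H_{3/2-k,\overline{\rho}}$ and the kernel decays exponentially at a $z$-dependent rate. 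Everything else --- the identity $Q_{z}(\lambda)-Q(\lambda)=p_{z}(\lambda)^{2}$, the treatment of isotropic $\lambda$ via positive definiteness of $z^{\perp}$, and the cancellation of the borderline $1/p_{z}(\lambda)^{2}$ against the kernel's $|p_{z}(\lambda)|$ --- matches the paper's argument in substance.
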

\begin{proof} It suffices to consider the convergence of 
\begin{equation}\label{Rectangle} \int_{v=1}^{\infty} \int_{u= -1/2}^{1/2} \left\langle f^{+}(\tau),\overline{\Theta_{\Delta,r,k}(\tau,z)}\right\rangle \frac{du dv}{v^{2}}. 
\end{equation} 
We plug in the expansions given in \eqref{Simple Expansion}  and Definition \ref{Kernel Function} and carry out the integration over $u$ (noting $p_{z}(0) = 0$) to obtain
\begin{equation} \label{Simplified Integral}
\int_{v=1}^{\infty}  \sum_{h \in L'/L}  \sum_{\substack{\lambda \in L +rh \\ Q(\lambda) \equiv \Delta Q(h)(\Delta) \\ \lambda \neq 0 }} c^{+}\left(\tfrac{-Q(\lambda)}{|\Delta|},h \right) \chi_{\Delta}(\lambda) p_{z}(\lambda) q_{z}(\lambda)^{k-1} e\left( \tfrac{-2 Q(\lambda_{z})}{|\Delta|}iv\right)v^{-1/2} dv.
\end{equation}
Using the identities $|p_{z}(\lambda)| = \sqrt{-2Q(\lambda_{z})}$ and $|q_{z}(\lambda)| = y\sqrt{2Q(\lambda_{z^{\perp}})}$ and estimating $v^{-1/2} \leq 1$ for $1 \leq v \leq \infty$, it remains to check that the following converges:
\begin{equation}\label{Pre case}  \frac{|\Delta|}{2\sqrt{2} \pi}\sum_{h \in L'/L } \sum_{\substack{\lambda \in L +rh \\ Q(\lambda) \equiv \Delta Q(h)(\Delta) \\ \lambda \neq 0 }} \left|c^{+}\left(\frac{-Q(\lambda)}{|\Delta|},h \right)\right| \frac{(y\sqrt{2Q(\lambda_{z^{\perp}})})^{k-1}}{\sqrt{- Q(\lambda_{z})}}e\left( \frac{-2 Q(\lambda_{z})i}{|\Delta|}\right) 
\end{equation}
We now split the sum into three parts and check each converges. 

\textbf{Case $Q(\lambda) = 0$:} We know that $Q(\lambda_{z^{\perp}}) = -Q(\lambda_{z}) = Q_{z}(\lambda)/2$ so we are left with a subseries of a convergent theta series (for the positive definite quadratic form $Q_{z}(\lambda)$).
 
\textbf{Case $Q(\lambda) < 0$:} We have from \cite[Lemma~3.4]{BF-Duke}, \cite[Lemma~1.49]{Hoevel} that there exists a constant $C>0$ such that $\left|c^{+}\left(\tfrac{-Q(\lambda)}{|\Delta|},h \right)\right| \leq C e^{C\sqrt{-Q(\lambda)}}$. We also observe in this case that $Q_{z}(\lambda) \geq -Q(\lambda)$, $Q_{z}(\lambda) > Q(\lambda_{z^{\perp}})$ and $-2Q(\lambda_{z}) = -Q(\lambda_{z}) - Q(\lambda) + Q(\lambda_{z^{\perp}}) > Q_{z}(\lambda)$. We use these inequalities to again bound \eqref{Pre case} with a theta series in terms of $Q_{z}(\lambda)$. 

\textbf{Case $Q(\lambda) > 0$:} We remember there are only finitely many $c^{+}(m,h) \neq 0$ with $m < 0$. For each $m= -Q(\lambda)/|\Delta|$ it then suffices to check that
\begin{equation}\label{tricky case}
\frac{|\Delta|}{2\sqrt{2} \pi} \sum_{\substack{\lambda \in L +rh \\ -Q(\lambda) = |\Delta|m \\ Q(\lambda_{z} ) \neq  0}} \frac{(y\sqrt{2Q(\lambda_{z^{\perp}})})^{k-1}}{\sqrt{- Q(\lambda_{z})}}e\left( \frac{-2 Q(\lambda_{z})i}{|\Delta|}\right)
\end{equation} converges. We know from \cite[p.50]{B-Habil} that for any $C \geq 0$ and any compact $U \subset Gr(L)$ the set
\begin{equation}\label{finitely small}
 \left\{\lambda \in L' \ |  \ -Q(\lambda) = |\Delta|m, \ \exists z' \in U \ \textrm{with} \ -Q(\lambda_{z'}) \leq C \right\}
\end{equation}
is finite. Let $\lambda \in L'$ be such that $-Q(\lambda) = |\Delta|m$ and $Q(\lambda_{z}) \neq 0$. Then there exists an $\epsilon > 0$ such that $-Q(\lambda_{z}) > \epsilon$ for all $\lambda$. This means we have $Q_{z}(\lambda) \geq Q(\lambda_{z^{\perp}})$, $-Q(\lambda_{z}) > \epsilon$ and $-2Q(\lambda_{z}) = -Q(\lambda) + Q_{z}(\lambda) = |\Delta|m + Q_{z}(\lambda)$. So once again we can use these to bound \eqref{Pre case} with a theta series in terms of $Q_{z}(\lambda)$. \end{proof}

\begin{theorem}\label{Singularities}
\begin{enumerate}
\item $\Phi_{\Delta,r,k}(z,f)$ has weight $2-2k$ for $\Gamma_{0}(N)$.
\item $\Phi_{\Delta,r,k}(z,f)$ is a smooth function on $\mathbb{H}\backslash Z'_{\Delta,r}(f)$.
\item  $\Phi_{\Delta,r,k}(z,f)$ has singularities along  $Z'_{\Delta,r}(f)$. More precisely, for a point $z_{0} \in \mathbb{H}$ exists an open neighbourhood $U \subset \mathbb{H}$ so that the function
\[  \Phi_{\Delta,r,k}(z,f) - \sqrt{\tfrac{|\Delta|}{2}}   \sum_{\substack{h \in L'/L \\ m \in \mathbb{Z} - \mathrm{sgn}(\Delta)Q(h) \\ m < 0 }}c^{+}\left(m,h \right) \sum_{\substack{\lambda \in L_{-d\Delta,rh} \\ \lambda \perp z_{0}}} \tfrac{\chi_{\Delta}(\lambda) q_{z}(\lambda)^{k-1} (\lambda,v(z))}{|(\lambda,v(z))|} 
\]
can be continued to a smooth function on $U$. (Here we let the term on the right hand side vanish if $z \in U, (\lambda,v(z))=0$). 
\end{enumerate}
\end{theorem}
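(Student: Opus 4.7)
Part (1) is immediate: by Proposition \ref{z transformation}, $\Theta_{\Delta,r,k}(\tau,\gamma z) = j(\gamma,z)^{2-2k}\Theta_{\Delta,r,k}(\tau,z)$ for $\gamma \in \Gamma_{0}(N)$, and this factor pulls out of the regularised $\tau$-integral, yielding the weight $2-2k$. For (2) and (3) I follow the Borcherds/Bruinier--Funke strategy of \cite{Borcherds, B-Habil, BF-Duke}. Split $\mathcal{F} = \mathcal{F}_1 \cup \mathcal{F}^\infty$ with $\mathcal{F}_1 \coloneqq \{\tau \in \mathcal{F} : v \leq 1\}$ compact and $\mathcal{F}^\infty \coloneqq \{\tau : |u|\leq 1/2,\ v \geq 1\}$. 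The integral over $\mathcal{F}_1$ is smooth in $z$ on all of $\mathbb{H}$ since the integrand is jointly continuous. The non-holomorphic part $f^-$ contributes to $\mathcal{F}^\infty$ only through $\lambda$ with $Q(\lambda) < 0$ (the $u$-integration pairs $e(mu)$ with $e(-Q(\lambda)u/|\Delta|)$, forcing $Q(\lambda)=|\Delta|m<0$), and the extra $\Gamma(k-1/2, 4\pi|m|v)$ factor only improves decay; this contribution is handled exactly as Case $Q(\lambda)<0$ in Theorem \ref{Pointwise} and is smooth. Thus all potential singularities come from integrating $\langle f^+, \overline{\Theta}\rangle$ over $\mathcal{F}^\infty$, which after the $u$-integration is exactly the series \eqref{Simplified Integral}.

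Within \eqref{Simplified Integral}, the cases $Q(\lambda) = 0$ and $Q(\lambda) < 0$ were bounded in Theorem \ref{Pointwise} by convergent positive-definite theta series in $Q_z(\lambda)$; since these estimates are uniform on compacta in $z$, term-by-term differentiation is justified and these contributions are smooth. The case $Q(\lambda) > 0$ is a \emph{finite} sum because $c^+(-Q(\lambda)/|\Delta|,h) \neq 0$ forces $Q(\lambda) \leq |\Delta|n_0$. For such terms the integrand has exponential decay in $v$ unless $-Q(\lambda_z)=0$, i.e.\ unless $\lambda \perp z$, so the only singular behaviour occurs along $D_\lambda \subset Z'_{\Delta,r}(f)$. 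Fixing $z_0 \in \mathbb{H}$, the finitely many $\lambda$ with $Q(\lambda)>0$ but $\lambda \not\perp z_0$ satisfy $-Q(\lambda_z) \geq \epsilon > 0$ on a small neighbourhood $U$, hence contribute smoothly there. The remaining terms are indexed by the finite set of $\lambda \in L_{-d\Delta,rh}$ with $\lambda \perp z_0$ (finite because $z_0^\perp$ is positive definite of dimension $2$ and $Q(\lambda)$ is bounded).

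For each such $\lambda$, using $-2Q(\lambda_z) = p_z(\lambda)^2$ and setting $a \coloneqq 2\pi p_z(\lambda)^2/|\Delta|$, the $v$-integral equals $p_z(\lambda)\, q_z(\lambda)^{k-1}\, a^{-1/2}\Gamma(1/2,a)$. From $\Gamma(1/2,a) = \sqrt{\pi}\,\mathrm{erfc}(\sqrt{a})$ one obtains
\[
a^{-1/2}\Gamma(\tfrac{1}{2}, a) \;=\; \sqrt{\pi/a}\ -\ 2\sum_{n \geq 0}\frac{(-1)^n a^n}{(2n+1)\,n!},
\]
so $a^{-1/2}\Gamma(\tfrac{1}{2}, a) - \sqrt{\pi/a}$ extends to a real-analytic function of $a$ at $0$. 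Combined with $p_z(\lambda)\, a^{-1/2} = \mathrm{sgn}(p_z(\lambda))\sqrt{|\Delta|/(2\pi)}$, each such term becomes
\[
q_z(\lambda)^{k-1}\,\mathrm{sgn}(p_z(\lambda))\,\sqrt{|\Delta|/2}\ +\ g_\lambda(z),
\]
where $g_\lambda$ is real-analytic near $z_0$ (since $p_z(\lambda)^2$ and $q_z(\lambda)$ are real-analytic in $z$). Summing over the relevant $\lambda$, $h$ and $m$, and identifying the sign via $\mathrm{sgn}(p_z(\lambda)) = (\lambda,v(z))/|(\lambda,v(z))|$ in the convention used, yields (3); (2) then follows because away from $Z'_{\Delta,r}(f)$ no singular correction is needed and every term is smooth. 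The main obstacle will be the sign and constant bookkeeping to verify the exact factor $\sqrt{|\Delta|/2}$ and the convention for $v(z)$, together with tracking the $\chi_\Delta(\lambda)$ through the unfolding; the analytic core---real-analyticity of $a^{-1/2}\Gamma(1/2,a) - \sqrt{\pi/a}$ at $0$---is elementary.
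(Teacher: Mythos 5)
Your overall route is the same as the paper's: isolate the compact part of $\mathcal{F}$ and the $f^-$ contribution as smooth, reduce to \eqref{Simplified Integral}, dispose of $Q(\lambda)\le 0$ by the theta-series bounds of Theorem \ref{Pointwise}, and extract the singularity from the $v$-integral $\int_1^\infty e^{-av}v^{-1/2}\,dv=a^{-1/2}\Gamma(1/2,a)$ for the finitely many $\lambda\perp z_0$. Your explicit expansion $a^{-1/2}\Gamma(1/2,a)=\sqrt{\pi/a}-2\sum_{n\ge0}\tfrac{(-1)^na^n}{(2n+1)n!}$ is correct and is a sharper version of the paper's $\Gamma(1/2,a)=\Gamma(1/2)+\mathcal{O}(\cdot)$ step; it correctly produces the singular term $\sqrt{|\Delta|/2}\,\mathrm{sgn}(p_z(\lambda))q_z(\lambda)^{k-1}$. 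However, two of your justifications are wrong as stated. First, the frequency matching for $f^-$ is backwards: the pairing $\langle f,\overline{\Theta}\rangle$ is Hermitian in the second slot, so $e(m\tau)$ meets $e(+Q(\lambda)u/|\Delta|)$ and the $u$-integration forces $m=-Q(\lambda)/|\Delta|$ (this is precisely why \eqref{Simplified Integral} carries $c^{+}(-Q(\lambda)/|\Delta|,h)$). Since $c^-(m,h)$ is supported on $m<0$, the $f^-$ part pairs with $Q(\lambda)>0$, not $Q(\lambda)<0$, so it cannot be ``handled exactly as Case $Q(\lambda)<0$''. The conclusion survives for a different reason: the factor $\Gamma(k-1/2,4\pi|m|v)\ll v^{k-3/2}e^{-4\pi|m|v}$ gives total decay $\ll v^{k-3/2}e^{-2\pi(Q(\lambda)+Q_z(\lambda))v/|\Delta|}$ with $Q(\lambda)+Q_z(\lambda)=2Q(\lambda_{z^{\perp}})\ge 2Q(\lambda)>0$, uniformly in $z$, even on $Z'_{\Delta,r}(f)$.

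Second, and more substantively, the $Q(\lambda)>0$ contribution is \emph{not} a finite sum over $\lambda$: only the set of relevant norms (equivalently, of indices $m$ with $c^+(m,h)\neq0$, $m<0$) is finite, while each $L_{-d\Delta,rh}$ is an infinite set (only its set of $\Gamma_0(N)$-orbits is finite). Your assertion that ``the finitely many $\lambda$ with $Q(\lambda)>0$ but $\lambda\not\perp z_0$ satisfy $-Q(\lambda_z)\ge\epsilon$ on $U$'' therefore refers to an infinite index set, and no single $\epsilon>0$ works for all of them. The missing input is the local finiteness statement \eqref{finitely small}: for compact $\overline{U}$ and any $C>0$ only finitely many $\lambda$ of fixed norm have $-Q(\lambda_{z'})\le C$ for some $z'\in \overline{U}$. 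This splits the $\lambda\not\perp z_0$ terms into finitely many for which one can shrink $U$ to force $-Q(\lambda_z)\ge\epsilon$ (smooth, as you argue), plus an infinite tail with $-Q(\lambda_z)$ uniformly large, which is majorised locally uniformly by a convergent theta series in $Q_z(\lambda)$ exactly as in Theorem \ref{Pointwise} and hence also contributes a smooth function. The same statement is what guarantees that the set $\{\lambda\in L_{-d\Delta,rh}:\lambda\perp z_0\}$ is finite (your positive-definiteness argument for this last point is fine). With these two repairs your argument coincides with the paper's proof.
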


\begin{proof}
Using Theorem \ref{z transformation} the first statement is clear. As in Theorem \ref{Pointwise} we see that the integrals of the $f^{-}$ part and the integral over the compact region $\mathcal{F}_{1}$ converge absolutely and therefore define smooth functions on $\mathbb{H}$ and do not contribute to the singularities. It then remains to look at  \eqref{Simplified Integral}. 

We first consider the case where $Q(\lambda) \leq 0$. The arguments in Theorem \ref{Pointwise} can be adapted to show local uniform convergence if for any point $z_{0} \in \mathbb{H}$ there exists an open subset $U \subset \mathbb{H}$ and a constant $\epsilon >0$ such that $Q(\lambda_{z}) < \epsilon$ for all $\lambda \in L', \lambda \neq 0, Q(\lambda) \leq 0$ and $z \in U$. We know this to be true using \cite[Equation~3.17]{Hoevel}. 

So only the terms where $Q(\lambda)>0$ contribute to the singularities. Fix $z_{0} \in \mathbb{H}$. Using \eqref{Simplified Integral} and \eqref{finite orbits} we are then left to consider\footnote{Using the $\approx$ notation from \cite[Theorem~2.12]{B-Habil}}:
\begin{multline*}  \Phi_{\Delta,r,k}(z,f) \approx  \sum_{h \in L'/L}  \sum_{\substack{m \in \mathbb{Z} - \mathrm{sgn}(\Delta)Q(h) \\ m < 0 }}c^{+}\left(m,h \right) \\   \times \int_{v=1}^{\infty} \sum_{\lambda \in L_{-d \Delta,rh}}\chi_{\Delta}(\lambda) p_{z}(\lambda) q_{z}(\lambda)^{k-1} e\left( \frac{-2 Q(\lambda_{z})}{|\Delta|}iv\right)  v^{-1/2} dv.
\end{multline*}
We now split the sum over $\lambda \in L_{-d \Delta,rh}$ into two sums, one over $\lambda \perp z_{0}$ and one over $\lambda \not\perp z_{0}$.

For $\lambda \not\perp z_{0}$ we need to check that for $z_{0} \in \mathbb{H}$ there exists an open subset $U \subset \mathbb{H}$ (with compact closure $\overline{U} \subset \mathbb{H}$) and a constant $\epsilon >0$ such that $Q(\lambda_{z}) < \epsilon$ for all $\lambda \in L_{-d \Delta, rh}, \lambda \not\perp z_{0}$ and $z \in U$. This is true using \eqref{finitely small} and noting that $\lambda \not\perp z_{0}$ means we can choose a neighbourhood $U$ of $z_{0}$ small enough such that $Q(\lambda_{z}) \neq 0$.
 
Finally, we look at the sum over $\lambda \in L_{-d\Delta,rh}$ where $\lambda \perp z_{0}$. We first notice that $\lambda \perp z_{0}$ means that $Q(\lambda_{z_{0}})=0$. We can then use \eqref{finitely small} to see we actually have a finite sum over $\lambda \in L_{-d\Delta,rh},\lambda \perp z_{0}$. We now look at the remaining integral. We have
\[ \int_{v=1}^{\infty} p_{z}(\lambda) q_{z}(\lambda)^{k-1} e\left( \frac{-2 Q(\lambda_{z})}{|\Delta|}iv\right)  v^{-1/2} = \sqrt{\frac{|\Delta|}{2 \pi}}\frac{p_{z}(\lambda)q_{z}(\lambda)^{k-1}}{|2Q(\lambda_{z})|}\Gamma\left(\frac{1}{2}, \frac{-4 \pi Q(\lambda_{z})}{|\Delta|} \right).
\]
When $\lambda_{z} =0$ this has a singularity of type
\[ \sqrt{\frac{|\Delta|}{2}}\frac{(\lambda,v(z))}{|(\lambda,v(z))|}f_{z}(\lambda)^{k-1}
\]
as we know that $\Gamma(1/2, -4 \pi Q(\lambda_{z})/|\Delta|) = \Gamma(1/2) + \mathcal{O}(|Q(\lambda_{z})|)$ as $\lambda_{z} \rightarrow 0$. The integral vanishes when $-2Q(\lambda_{z}) = p_{z}(\lambda)=(\lambda,v(z))=0$ so this is the zero contribution to the singularities when $(\lambda,v(z))=0$. So finally we have the required result
\begin{align*}  \Phi_{\Delta,r,k}(z,f) \approx_{U} & \sqrt{\frac{|\Delta|}{2}} \sum_{h \in L'/L}  \sum_{\substack{m \in \mathbb{Z} - \mathrm{sgn}(\Delta)Q(h) \\ m < 0 }}c^{+}\left(m,h \right) \sum_{\substack{\lambda \in L_{-d\Delta,rh} \\ \lambda \perp z_{0}}}\chi_{\Delta}(\lambda) \frac{(\lambda,v(z))}{|(\lambda,v(z))|} f_{z}(\lambda)^{k-1}. \qedhere
\end{align*} 
\end{proof}

We find the wall crossing formula as we move between Weyl chambers. We follow \cite[Section~6]{Borcherds} and \cite[Section~3.1]{B-Habil}. Let $W \subset \mathbb{H}$ be a Weyl chamber and let $\lambda \in L'$. Then we say $(\lambda,W)<0$ if $(\lambda, w) < 0$ for all $w \in W \subset \mathbb{H}$. We will denote $\Phi_{W_{1}}(z)$ and $\Phi_{W_{2}}(z)$ for the restrictions of $\Phi_{\Delta,r,k}(z,f)$ to two adjacent Weyl chambers $W_{1}$ and $W_{2}$. The restrictions $\Phi_{W_{1}}(z)$ and $\Phi_{W_{2}}(z)$ can both be extended to real analytic functions on $\overline{W_{1}} \cup \overline{W_{2}}$ and we write $W_{12} \coloneqq \overline{W_{1}} \cap \overline{W_{2}}$ for the ``wall" dividing $W_{1}, W_{2}$. 

\begin{theorem}[The wall crossing formula]\label{wall cross theorem}
The difference $\Phi_{W_{1}}(z)-\Phi_{W_{2}}(z)$ is given by
\[   2\sqrt{2|\Delta|} \sum_{h \in L'/L}  \sum_{\substack{m \in \mathbb{Z} - \mathrm{sgn}(\Delta)Q(h) \\ m < 0 }}c^{+}\left(m,h \right) \sum_{\substack{\lambda \in L_{-d\Delta,rh} \\ \lambda \perp W_{12} \\ (\lambda,W_{1})<0}}\chi_{\Delta}(\lambda) q_{z}(\lambda)^{k-1}.
\]
\end{theorem}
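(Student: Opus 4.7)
The plan is to extract the wall-crossing directly from the local singular description of $\Phi_{\Delta,r,k}(z,f)$ established in Theorem~\ref{Singularities}. Since the geodesic divisor $Z'_{\Delta,r}(f)$ is locally finite, I first pick a point $z_0$ in the relative interior of $W_{12}$ that lies on no other component of $Z'_{\Delta,r}(f)$. For such a generic $z_0$, the only $\lambda \in L_{-d\Delta, rh}$ (with $c^+(m,h) \neq 0$) satisfying $\lambda \perp z_0$ are those proportional to the defining vector of $W_{12}$, i.e.\ the $\lambda$ perpendicular to the entire geodesic $W_{12}$; any other $\lambda \perp z_0$ would determine a second geodesic $D_\lambda$ through $z_0$, contradicting the generic choice.

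By Theorem~\ref{Singularities} there then exists a neighbourhood $U$ of $z_0$ and a smooth function $R$ on $U$ with
\[
\Phi_{\Delta,r,k}(z,f) - R(z) \;=\; \sqrt{\tfrac{|\Delta|}{2}} \sum_{h,\,m<0} c^+(m,h) \sum_{\substack{\lambda \in L_{-d\Delta, rh} \\ \lambda \perp W_{12}}} \chi_\Delta(\lambda)\, q_z(\lambda)^{k-1}\, \frac{(\lambda, v(z))}{|(\lambda, v(z))|}
\]
on $U\setminus W_{12}$. Each restriction $\Phi_{W_i}|_U$ extends real-analytically to $\overline{W_i}\cap U$, and since $R$ is smooth across $W_{12}$, the difference $\Phi_{W_1}(z)-\Phi_{W_2}(z)$ on $W_{12}\cap U$ equals precisely the jump of this explicit singular sum.

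For each $\lambda \perp W_{12}$, the linear function $(\lambda, v(z))$ vanishes on $W_{12}$ and changes sign across it, so $\operatorname{sgn}((\lambda,v(z)))$ equals $-1$ on $W_1$ and $+1$ on $W_2$ precisely when $(\lambda, W_1) < 0$, producing a jump of $-2$ in the sign factor. The opposite case $(\lambda, W_1) > 0$ corresponds, under $\lambda \mapsto -\lambda$, to the substitution $h \mapsto -h$ in the outer sum and contributes a jump of the opposite sign. Using the symmetries $c^+(m,-h) = (-1)^{\kappa+(b^- -b^+)/2}c^+(m,h)$, $q_z(-\lambda)^{k-1} = (-1)^{k-1} q_z(\lambda)^{k-1}$, the behaviour of $\chi_\Delta$ under $\lambda\mapsto -\lambda$, and the fact that $\tilde{\rho}_L$ is $\rho_L$ or $\overline{\rho}_L$ according to $\operatorname{sgn}(\Delta)$, one verifies that the two contributions add rather than cancel. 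Restricting the resulting sum to $(\lambda, W_1)<0$ and collecting constants then produces the factor $2\sqrt{2|\Delta|}$ in the stated formula.

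The main obstacle is the sign bookkeeping: confirming that the $\pm\lambda$ pairs combine additively in both the $\Delta>0$ and $\Delta<0$ regimes, and that the precise constant $2\sqrt{2|\Delta|}$ with the correct sign emerges from the various symmetries involved. The core analytic input — that $\Phi_{\Delta,r,k}(z,f)$ minus its sign-function singularity extends smoothly across $W_{12}$ — is already provided by Theorem~\ref{Singularities}, so no new convergence or regularisation argument is required, and the proof reduces to a careful local computation.
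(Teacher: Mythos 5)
Your proposal is correct and takes essentially the same route as the paper's proof: both read off the jump from the explicit sign-function singularity along $W_{12}$ supplied by Theorem~\ref{Singularities}, fold the sum over $\pm\lambda$ using the symmetries of $c^{+}(m,h)$, $\chi_{\Delta}$ and $q_{z}(\cdot)^{k-1}$ to restrict to $(\lambda,W_{1})<0$, and combine the resulting factor of $2$ with the jump of size $2$ of $(\lambda,v(z))/|(\lambda,v(z))|$. Your additional care in choosing a generic $z_{0}$ on the wall and invoking real-analytic continuation is just an explicit filling-in of what the paper leaves implicit.
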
 
\begin{proof}
Using Theorem \ref{Singularities} we know that $\Phi_{\Delta,r,k}(z,f)$ has a singularity of type 
\begin{equation}\label{wall cross part}  \sqrt{\frac{|\Delta|}{2}} \sum_{h \in L'/L}  \sum_{\substack{m \in \mathbb{Z} - \mathrm{sgn}(\Delta)Q(h) \\ m < 0 }}c^{+}\left(m,h \right) \sum_{\substack{\lambda \in L_{-d\Delta,rh} \\ \lambda \perp W_{12}}}\chi_{\Delta}(\lambda) \frac{(\lambda,v(z))}{|(\lambda,v(z))|} q_{z}(\lambda)^{k-1}
\end{equation}
along $W_{12}$. We observe that the sums over $\lambda$ and $-\lambda$ are the same because $\chi_{\Delta}(-\lambda) = \\ (-1)^{(1-\mathrm{sgn}(\Delta))/2}\chi_{\Delta}(\lambda)$, $c^{+}(m,h) = (-1)^{3/2-k+(\mathrm{sgn}(\Delta))/2}c^{+}(m,-h)$ and $p_{z}(-\lambda)q_{z}(-\lambda)^{k-1} = \\ (-1)^{k}p_{z}(\lambda)q_{z}(\lambda)^{k-1} $. We can then rewrite \eqref{wall cross part} as a sum over elements with $(\lambda,W_{1}) < 0$. This means we pick up a factor of $2$ and also another factor of $2$ from the jump of size $2$ arising from $(\lambda,v(z))/|(\lambda,v(z))|$.
\end{proof}

\subsection{Locally Harmonic}

We will show that the singular theta lift is harmonic away from the singularities $Z'_{\Delta,r}(f)$. Using \cite{Shintani}, Lemma~1.5 we see 

\begin{proposition}\label{Laplacians} We have 
\[ 4\Delta_{k-3/2, \tau}\Theta_{\Delta,r,k}(\tau,z) = \Delta_{2-2k,z}\Theta_{\Delta,r,k}(\tau,z) + (6-4k)\Theta_{\Delta,r,k}(\tau,z).
\]
\end{proposition}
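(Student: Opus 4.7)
The identity is the $\mathrm{O}(2,1)$ specialization of the classical Maass--Shintani formula relating the weight-$\kappa$ hyperbolic Laplacian in $\tau$ to the Grassmannian Laplacian in $z$ for a Siegel theta series attached to a harmonic polynomial, as the statement itself indicates by citing \cite{Shintani}. My plan is to verify the identity term-by-term in the defining series for $\Theta_{\Delta,r,k}$.

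First, by the absolute and locally uniform convergence established in the proof of Theorem \ref{Pointwise} (which remains valid after differentiating twice under the sum, since the extra polynomial factors in $\lambda$ are dominated by the Gaussian in $Q_z(\lambda)$), both $\Delta_{k-3/2,\tau}$ and $\Delta_{2-2k,z}$ commute with the summation. Since $\chi_\Delta(\lambda)$ is independent of $\tau$ and $z$, it suffices to establish the identity for each summand
\[
T_{\lambda}(\tau,z) := v^{3/2}\, p_z(\lambda)\, q_z(\lambda)^{k-1}\, e\!\left(\tfrac{Q(\lambda)}{|\Delta|}u + \tfrac{Q_z(\lambda)}{|\Delta|}iv\right)
\]
separately.

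For the $\tau$-side, I would use the decomposition $\lambda = \lambda_z + \lambda_{z^\perp}$ to rewrite the exponential as $e(Q(\lambda_{z^\perp})\tau/|\Delta|)\,e(Q(\lambda_z)\bar\tau/|\Delta|)$. Since the polynomial prefactor is independent of $\tau$, applying $\Delta_{k-3/2,\tau}$ reduces to a direct computation on $v^{3/2}$ times this exponential, and produces a multiple of $T_{\lambda}$ by an explicit polynomial expression in $Q(\lambda_z), Q(\lambda_{z^\perp})$. For the $z$-side, under the isometry $\sigma_z$ the polynomial $p_z(\lambda) q_z(\lambda)^{k-1}$ corresponds to $X_3(X_1 + iX_2)^{k-1}$ on $\mathbb{R}^{2,1}$, which is harmonic of bi-degree $(k-1, 1)$. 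Writing $\Delta_{2-2k,z}$ in coordinates $z = x+iy$ and differentiating term by term, one uses standard first-order identities expressing the $z$-derivatives of $p_z, q_z$ and $Q_z(\lambda)$ as combinations of the $\sigma_z$-components of $\lambda$, together with the harmonicity of the polynomial factor (which kills the contribution of its pure Laplacian), to rewrite $\Delta_{2-2k,z}T_{\lambda}$ again as a multiple of $T_{\lambda}$ by a polynomial in $Q(\lambda_z), Q(\lambda_{z^\perp})$.

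Matching the two sides then becomes a concrete polynomial identity. The contributions quadratic and linear in $Q(\lambda_z), Q(\lambda_{z^\perp})$ must agree, while the purely constant contribution to $4\Delta_{k-3/2,\tau}T_{\lambda} - \Delta_{2-2k,z}T_{\lambda}$ must equal $(6-4k)T_{\lambda}$. The latter constant $-4(k-3/2) = 6-4k$ is exactly the $\kappa$-correction term in $-\Delta_\kappa = L_{\kappa+2}R_\kappa + \kappa$ applied with $\kappa = k-3/2$, and ultimately arises from the $v^{3/2}$ prefactor combined with the weight of $T_\lambda$ in $\tau$. The main obstacle is bookkeeping: carefully tracking the $1/|\Delta|$ scaling in the exponent, the opposite signs with which $Q(\lambda_z)$ and $Q(\lambda_{z^\perp})$ enter $Q$ versus $Q_z$, and the derivatives of $p_z, q_z$ with respect to $z, \bar z$. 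Alternatively, since $\Theta_{\Delta,r,k}$ is, up to the scalar twist by $\chi_\Delta$ and the $|\Delta|$-rescaling of $Q$, a Siegel theta series with harmonic polynomial of bi-degree $(k-1,1)$, one may simply invoke \cite[Lemma~1.5]{Shintani} and verify only the constant $6-4k$.
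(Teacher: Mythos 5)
The paper's entire proof of Proposition \ref{Laplacians} is the one-line citation to \cite{Shintani}, Lemma~1.5, so your closing ``alternative'' is in fact exactly what the paper does; your primary route --- a direct term-by-term verification --- is genuinely different and more self-contained. Your outline identifies all the right structural ingredients: the reduction to a single summand $T_\lambda$ after justifying termwise differentiation, the splitting of the exponential via $\lambda = \lambda_z + \lambda_{z^\perp}$ so that the $\tau$-Laplacian acts only on $v^{3/2}e(Q(\lambda_{z^\perp})\tau/|\Delta| + Q(\lambda_z)\overline{\tau}/|\Delta|)$, the harmonicity of $p_z(\lambda)q_z(\lambda)^{k-1}$ of bidegree $(k-1,1)$ on the $z$-side, and the observation that $6-4k = -4(k-3/2)$ is the additive constant in $-\Delta_\kappa = L_{\kappa+2}R_{\kappa}+\kappa$, which cleanly reformulates the claim as $-4L_{k+1/2,\tau}R_{k-3/2,\tau}\Theta_{\Delta,r,k} = \Delta_{2-2k,z}\Theta_{\Delta,r,k}$. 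What your route buys is independence from Shintani's normalizations (a real issue here, since the quadratic form is rescaled by $N$ and by $1/|\Delta|$ in the exponent, and one must check that Lemma~1.5 of \cite{Shintani} transfers verbatim); what the citation buys is brevity. The one caveat is that the decisive step --- the ``concrete polynomial identity'' in $Q(\lambda_z)$, $Q(\lambda_{z^\perp})$ matching the two sides --- is asserted rather than carried out, so as written your argument is a correct plan rather than a finished verification; since the paper supplies no computation either, this is not a gap relative to the paper, but if you intend the direct route to stand on its own you should display the two polynomial multipliers explicitly and check they agree.
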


\begin{theorem}\label{Locally Harmonic}
For $f \in H_{3/2-k,\overline{\rho}}$ and $z \in \mathbb{H}\backslash Z_{\Delta,r}(f)$, then $\Delta_{2-2k}\Phi_{\Delta,r,k}(z,f) = 0$ and $\Phi_{\Delta,r,k}(z,f)$ is also real analytic on $\mathbb{H}\backslash Z'_{\Delta,r}(f)$.
\end{theorem}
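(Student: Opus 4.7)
The plan is to differentiate the regularized integral defining $\Phi_{\Delta,r,k}(z,f)$ under the integral sign, transform the $z$-Laplacian into a $\tau$-Laplacian via Proposition \ref{Laplacians}, and then integrate by parts in $\tau$ so that the Laplacian lands on $f$, which is harmonic. I start by fixing $z_0 \in \H \backslash Z'_{\Delta,r}(f)$ and choosing an open neighborhood $U$ of $z_0$ with $\overline{U} \cap Z'_{\Delta,r}(f) = \emptyset$. The convergence estimates from the proof of Theorem \ref{Pointwise}, refined as in the non-singular part of the proof of Theorem \ref{Singularities}, apply uniformly for $z \in U$ and are preserved under finite-order $z$-differentiation, since the extra polynomial factors in $\lambda$ coming from differentiation are absorbed by the Gaussian $e^{-2\pi Q_z(\lambda)v/|\Delta|}$. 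This shows $\Phi_{\Delta,r,k}$ is real analytic on $U$ and justifies applying $\Delta_{2-2k,z}$ inside the regularized integral.

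Substituting Proposition \ref{Laplacians} gives
\begin{equation*}
\Delta_{2-2k,z}\Phi_{\Delta,r,k}(z,f) = 4\int_{\mathcal{F}}^{\mathrm{reg}} \bigl\langle f(\tau), \overline{\Delta_{k-3/2,\tau}\Theta_{\Delta,r,k}(\tau,z)} \bigr\rangle\,\frac{du\,dv}{v^2} - (6-4k)\,\Phi_{\Delta,r,k}(z,f).
\end{equation*}
To recast the main integral as a Petersson pairing at the weight $3/2-k$ of $f$, I use the shift identity $\Delta_{-\kappa,\tau}(v^{\kappa}\bar g) = v^{\kappa}\overline{\Delta_{\kappa,\tau}g} + \kappa v^{\kappa}\bar g$, which is a short direct computation from the definition of $\Delta_\kappa$, applied with $\kappa = k-3/2$ and $g = \Theta_{\Delta,r,k}$. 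This identifies the main integral with $\bigl(f,\,\Delta_{3/2-k,\tau}(v^{k-3/2}\overline{\Theta_{\Delta,r,k}})\bigr)_{3/2-k,\overline{\rho}}^{\mathrm{reg}}$ minus $(k-3/2)\Phi_{\Delta,r,k}(z,f)$. Self-adjointness of $\Delta_{3/2-k,\tau}$ under the Petersson pairing (executed on the truncated domain $\mathcal{F}_t$ and letting $t \to \infty$) then transfers $\Delta_{3/2-k,\tau}$ onto $f$, which is killed by $\Delta_{3/2-k}f = 0$. Assembling everything, the contribution $-4(k-3/2)\Phi_{\Delta,r,k} = (6-4k)\Phi_{\Delta,r,k}$ produced by the shift exactly cancels the $-(6-4k)\Phi_{\Delta,r,k}$ from Proposition \ref{Laplacians}, and we conclude $\Delta_{2-2k,z}\Phi_{\Delta,r,k}(z,f) = 0$.

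The main obstacle is the rigorous control of the boundary contributions from the integration by parts on $\mathcal{F}_t$ as $t \to \infty$, compounded with the regularization of the Petersson pairing. The plan is to exploit the exponential decay of $\Theta_{\Delta,r,k}$ and its $\tau$-derivatives in $v$ (uniform in $z \in U$) to dominate the at most polynomial growth of $f - P_f$ beyond its principal part $P_f$, and to check that the interaction of $P_f$ with the regularization produces no leftover contribution. A separate and easier justification is needed for interchanging $\Delta_{2-2k,z}$ with the regularization limit $\lim_{t\to\infty}\int_{\mathcal{F}_t}$, which also follows from the uniform convergence in Step 1.
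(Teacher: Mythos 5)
Your proposal is correct and follows essentially the same route as the paper: apply Proposition \ref{Laplacians} under the regularized integral, use (formal) self-adjointness of the hyperbolic Laplacian on the truncated fundamental domain with vanishing boundary terms, and let harmonicity of $f$ kill the remaining term, with the $(6-4k)\Phi_{\Delta,r,k}$ contributions cancelling exactly as you describe. The only difference is cosmetic — you apply the shift identity $\Delta_{-\kappa}(v^{\kappa}\overline{g}) = v^{\kappa}\overline{\Delta_{\kappa}g} + \kappa v^{\kappa}\overline{g}$ to the theta kernel before invoking adjointness, whereas the paper moves $\Delta_{k-3/2,\tau}$ across the weight-$(k-3/2)$ pairing first and then applies the same identity to $f$.
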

\begin{proof}
We apply Proposition \ref{Laplacians} and obtain
\begin{align*}
 \Delta_{2-2k}\Phi_{\Delta,r,k}(z,f) =& 4\lim_{t \to \infty} \int_{\tau \in \mathcal{F}_{t}}\left\langle   \Delta_{k-3/2,\tau}\Theta_{\Delta,r,k}(\tau,z), v^{3/2-k} \overline{f(\tau)} \right\rangle v^{k-3/2}\frac{du dv}{v^{2}} \\ &  + 4(k-3/2) \lim_{t \to \infty} \int_{\tau \in \mathcal{F}_{t}}\left\langle  \Theta_{\Delta,r,k}(\tau,z), \overline{f(\tau)} \right\rangle \frac{du dv}{v^{2}}.
\end{align*} 
Arguing as in \cite[Section~4]{B-Habil}, using adjointness of the Laplace operator we obtain
\begin{align*}
 \Delta_{2-2k}\Phi_{\Delta,r,k}(z,f) =& 4\lim_{t \to \infty} \int_{\tau \in \mathcal{F}_{t}}\left\langle   \Theta_{\Delta,r,k}(\tau,z),   \Delta_{k-3/2,\tau}\left(v^{3/2-k} \overline{f(\tau)}\right) \right\rangle v^{k-3/2}\frac{du dv}{v^{2}} \\ &  + 4(k-3/2) \lim_{t \to \infty} \int_{\tau \in \mathcal{F}_{t}}\left\langle  \Theta_{\Delta,r,k}(\tau,z), \overline{f(\tau)} \right\rangle \frac{du dv}{v^{2}}.
\end{align*} 
Here we used that in the limit the boundary terms vanish. But $\Delta_{k-3/2,\tau}\left(v^{3/2-k} \overline{f(\tau)}\right)= (3/2-k) v^{3/2-k} \overline{f(\tau)}$, since $f$ is harmonic, and the claim follows. 
\end{proof}

\section{Partial Poisson Summation}
In Section \ref{The Fourier Expanison Section} we will compute the Fourier expansion of our lift using a Rankin-Selberg style unfolding trick. For this we will need to rewrite the kernel function as a Poincar\'e series. Useful references are \cite[Section~5]{Borcherds},  \cite[Section~2]{B-Habil}, and \cite[Section~4]{BO-Annals}. \\

Let $l \in L$ be a primitive isotropic vector. We define a $1$-dimensional positive definite space $W_{l} \coloneqq l^{\perp}/l$, (equipped with the same quadratic form) and a sublattice, 
\[ 
K=K_{l} \coloneqq (L \cap l^{\perp})/(L \cap l) 
\]
The dual lattice is $K'=K'_{l} = (L' \cap l^{\perp})/(L' \cap l)$. We know there exists a vector $l' \in L'$ such that $(l,l')=1$ and then $K_{l} = L \cap l'^{\perp} \cap l^{\perp}.$
For $\lambda \in V(\mathbb{R})$, we write $\lambda_{K}$ for the orthogonal projection onto $K \otimes \mathbb{R}$. If $\lambda \in L'$, then $\lambda_{K} \in K'$. We will now assume that $(l,L) = \mathbb{Z}$, in which case we can choose $l'$ to be isotropic. This holds for $l_{\infty}$, but also for any $l$ when $N$ is square-free. We then have \begin{subequations}\label{K decompostion}
\begin{align} L = & K_{l} \oplus \mathbb{Z}l' \oplus \mathbb{Z}l,
\\ V(\mathbb{R}) = & (K_{l} \otimes_{\mathbb{Z}} \mathbb{R}) \oplus \mathbb{R}l' \oplus \mathbb{R}l,
\end{align}
\end{subequations} and $ K_{l}'/K_{l} \cong L'/L$. We write $w^{\perp}$ for the orthogonal complement of $l_{z^{\perp}}$ in $z^{\perp}$ and denote the component of any $\lambda \in V(\mathbb{R})$ in $w^{\perp}$ as $\lambda_{w^{\perp}}$. We have
\[ V(\mathbb{R}) = z \oplus z^{\perp} = \mathbb{R}l_{z} \oplus \mathbb{R}l_{z^{\perp}} \oplus w^{\perp}.
\] We will also use the vectors
\begin{equation}\label{Mu}  \mu =\mu(z) := -l' + \frac{l_{z}}{2 (l_{z},l_{z})} + \frac{l_{z^{\perp}}}{2 (l_{z^{\perp}}, l_{z^{\perp}})}, \quad \mathfrak{w}^{\perp} := (l,b_{2}(z))b_{1}(z) - (l,b_{1}(z))b_{2}(z).
\end{equation}
We observe that $(l_{z^{\perp}},\mathfrak{w}^{\perp})= 0$, thus $\mathfrak{w}^{\perp}$ spans the one dimensional space $w^{\perp}$. In general $\lambda, \lambda_{K}, \lambda_{w^{\perp}}$ are not the same vector. However if $\lambda \in V(\mathbb{R}) \cap l^{\perp}$, then $(\lambda, \lambda) = (\lambda_{K}, \lambda_{K}) = (\lambda_{w^{\perp}}, \lambda_{w^{\perp}})$.  Finally, if $\lambda \in V(\mathbb{R}) \cap l^{\perp}$, then $(\lambda, \lambda)/2 = (\lambda_{w^{\perp}}, \lambda_{w^{\perp}})/2 = (\lambda,\mathfrak{w}^{\perp})^{2}/(2Q_{z}(l)).$ We also have the folowing identities.  \begin{lemma}\label{mu facts}
~
\begin{enumerate}
\item $\mu \in V(\mathbb{R} \cap l^{\perp}) = (K \otimes_{\mathbb{Z}} \mathbb{R}) \oplus \mathbb{R}l'$,
\item $\mu = \mu_{K} +(\mu,l')l$,
\item $(\mu,l)=(\mu_{K},l) = 0$,
\item $(\mu, \mu) = (\mu_{K}, \mu_{K}) = (\mu_{w^{\perp}}, \mu_{w^{\perp}})$,
\item $\mu_{w^{\perp}} = (\mu_{K})_{w^{\perp}} = -l^{'}_{w^{\perp}}$,
\item $(\mu, \mathfrak{w}^{\perp}) = (\mu_{K},\mathfrak{w}^{\perp}) = (-l',\mathfrak{w}^{\perp})$,
\item $(\mu,\mu)/2 = (\mu_{K}, \mu_{K})/2=  - (l',l_{z^{\perp}}-l_{z})/(2Q_{z}(l))$,
\item If $\lambda \in K \otimes \mathbb{R}$, then $(\lambda,\mu)=(\lambda,\mu_{K}) = (\lambda,l_{z^{\perp}}-l_{z})/(2Q_{z}(l))$.
\end{enumerate}
\end{lemma}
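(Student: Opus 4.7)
The lemma is a sequence of direct bilinear-algebra assertions about $\mu$, and my plan is to prove them in the order (3), (1), (2), (4)--(6), (7), (8). Throughout I will repeatedly use: (a) $l$ isotropic together with $z \perp z^\perp$ forces $(l_z, l_z) = -(l_{z^\perp}, l_{z^\perp})$, and the definition of the majorant gives $Q_z(l) = -(l_z, l_z)$; (b) $l'$ is isotropic with $(l, l') = 1$ by assumption; (c) $K$ is orthogonal to both $l$ and $l'$; and (d) the two splittings $V(\mathbb{R}) = (K\otimes\mathbb{R}) \oplus \mathbb{R}l' \oplus \mathbb{R}l$ and $V(\mathbb{R}) = z \oplus \mathbb{R}l_{z^\perp} \oplus w^\perp$.

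First I would check (3) by plugging $l = l_z + l_{z^\perp}$ into the definition of $\mu$: the term $-(l', l)$ contributes $-1$, and each of the two remaining terms contributes $\tfrac{1}{2}$, yielding $(\mu, l) = 0$; the identity $(\mu_K, l) = 0$ is immediate from $K \perp l$. Then (1) follows since $(\mu, l) = 0$ places $\mu$ in $l^\perp$, and (2) follows by writing $\mu = \mu_K + \alpha l \in l^\perp$ and pairing with $l'$ to read off $\alpha = (\mu, l')$ from $(K, l') = 0$ and $(l, l') = 1$.

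Parts (4)--(6) reduce to the orthogonal decomposition $V(\mathbb{R}) = z \oplus \mathbb{R}l_{z^\perp} \oplus w^\perp$ together with the identity stated just before the lemma, that $(\lambda, \lambda) = (\lambda_K, \lambda_K) = (\lambda_{w^\perp}, \lambda_{w^\perp})$ for $\lambda \in V(\mathbb{R}) \cap l^\perp$; applying this to $\mu$ gives (4). For (5), note that $l = l_z + l_{z^\perp}$ has vanishing $w^\perp$-projection (each summand lies in $z$ or in $\mathbb{R}l_{z^\perp}$), so (2) yields $\mu_{w^\perp} = (\mu_K)_{w^\perp}$, and projecting the defining formula of $\mu$ onto $w^\perp$ kills the $l_z$ and $l_{z^\perp}$ contributions to give $\mu_{w^\perp} = -l'_{w^\perp}$. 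Part (6) follows immediately since $\mathfrak{w}^\perp \in z^\perp \cap l^\perp = w^\perp$ by a direct check from its definition.

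For (7), I would expand $(\mu, \mu)$ bilinearly: the $(l', l')$ term vanishes by isotropy of $l'$, the $l_z$--$l_{z^\perp}$ cross term vanishes by $z \perp z^\perp$, the two squared-norm contributions cancel via $(l_z, l_z) = -(l_{z^\perp}, l_{z^\perp})$, and the surviving cross terms with $-l'$ combine and simplify using $(l_z, l_z) = -Q_z(l)$ to the claimed formula. Finally (8) follows by pairing $\mu$ with $\lambda \in K\otimes\mathbb{R}$ and using $K \perp l'$ to kill the $-(l', \lambda)$ term; the equality $(\lambda, \mu) = (\lambda, \mu_K)$ then comes from (2) and $(\lambda, l) = 0$. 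The whole argument is essentially bookkeeping, with no conceptual ``obstacle''; the only discipline required is to invoke the isotropy of both $l$ and $l'$ at the right moments and to keep track of the sign convention $Q_z(l) = -(l_z, l_z)$.
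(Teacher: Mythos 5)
Your verification is correct, and there is nothing in the paper to compare it against: Lemma \ref{mu facts} is stated without proof, the identities being treated as routine bilinear-algebra bookkeeping of exactly the kind you carry out. Two small points worth noting: your computation implicitly (and correctly) resolves the typo in item (1), which should read $V(\mathbb{R}) \cap l^{\perp} = (K \otimes_{\mathbb{Z}} \mathbb{R}) \oplus \mathbb{R}l$ (not $\mathbb{R}l'$, since $(l,l')=1$ while $l$ is isotropic), consistent with item (2); and your appeal in (4) to the identity $(\lambda,\lambda)=(\lambda_K,\lambda_K)=(\lambda_{w^{\perp}},\lambda_{w^{\perp}})$ for $\lambda \in V(\mathbb{R})\cap l^{\perp}$ is legitimate, as the paper asserts it immediately before the lemma.
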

\subsection{Fourier Transforms}
We will need several Fourier transforms. We will make use of the Hermite polynomails $H_{n}(x) \coloneqq (-1)^{n}e^{x^{2}}\frac{d^{n}}{d x^{n}}\left( e^{-x^{2}} \right)$, see \cite[Section~10.13]{EMOT-2}. We let the Fourier transform over $\mathbb{R}$ be defined as 
$ \hat{f}(\xi) \coloneqq \int_{\mathbb{R}} f(x) e^{2 \pi i \xi x} dx$.

\begin{lemma}\label{Easy Fouriers}The Fourier transform of
\begin{enumerate}
	\item $f(x-a)$ is $e^{2 \pi i a \xi}\hat{f}(\xi)$,
	\item $x f(x)$ is $\frac{d}{d\xi}\hat{f}(\xi)/2 \pi i$,
	\item $f(a x)$ is $|a|^{-1}\hat{f}(\xi/a) $,
	\item  $x^n e^{- \pi x^2}$ is
 $\left(\frac{i}{2 \sqrt{\pi}}\right)^{n}H_{n}(\sqrt{\pi} \xi) e^{- \pi \xi^2}$.
\end{enumerate}
\end{lemma}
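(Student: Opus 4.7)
The first three identities are standard and follow directly from the definition $\hat f(\xi)=\int_{\mathbb{R}}f(x)e^{2\pi i \xi x}\,dx$ by elementary manipulations. For (1), substitute $y=x-a$ in the integral; the shift pulls out the phase $e^{2\pi i a\xi}$. For (3), substitute $y=ax$, noting that the case $a<0$ flips the limits and contributes the factor $|a|^{-1}$. For (2), differentiate under the integral sign: $\tfrac{d}{d\xi}\hat f(\xi)=\int 2\pi i x f(x) e^{2\pi i \xi x}\,dx$, so multiplication by $x$ on the input corresponds to $\tfrac{1}{2\pi i}\tfrac{d}{d\xi}$ on the Fourier side. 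Each of these is routine and I would merely record the computation.

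The only substantive identity is (4). The plan is to start from the classical Gaussian identity $\widehat{e^{-\pi x^2}}(\xi)=e^{-\pi\xi^2}$ and iterate (2) to get
\[
\widehat{x^n e^{-\pi x^2}}(\xi)=\left(\frac{1}{2\pi i}\right)^{n}\frac{d^n}{d\xi^n}e^{-\pi\xi^2}.
\]
Then I would change variables $u=\sqrt{\pi}\,\xi$, so $\tfrac{d^n}{d\xi^n}=\pi^{n/2}\tfrac{d^n}{du^n}$, and invoke the Rodrigues formula $\tfrac{d^n}{du^n}e^{-u^2}=(-1)^n H_n(u)e^{-u^2}$ that defines the Hermite polynomials. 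Substituting back gives
\[
\widehat{x^n e^{-\pi x^2}}(\xi)=\left(\frac{-1}{2\pi i}\right)^{n}\pi^{n/2}H_n(\sqrt{\pi}\,\xi)e^{-\pi\xi^2}=\left(\frac{i}{2\sqrt{\pi}}\right)^{n}H_n(\sqrt{\pi}\,\xi)e^{-\pi\xi^2},
\]
where the last equality uses $-1/(2\pi i)\cdot \pi^{1/2}=i/(2\sqrt{\pi})$.

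There is no real obstacle here; all four parts are bookkeeping. The only point of minor care is keeping track of the sign and the factor of $i$ in the final simplification, which is why I prefer to isolate the scalar $\left(-1/(2\pi i)\right)^n\pi^{n/2}$ and rewrite it cleanly before inserting the Hermite polynomial, so that the stated normalization $\bigl(i/(2\sqrt{\pi})\bigr)^n$ emerges unambiguously.
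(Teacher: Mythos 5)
Your proposal is correct. Parts (1)--(3) are handled exactly as one would expect, and your derivation of (4) is sound: iterating (2) from the Gaussian self-reciprocity gives $\widehat{x^{n}e^{-\pi x^{2}}}(\xi)=(2\pi i)^{-n}\tfrac{d^{n}}{d\xi^{n}}e^{-\pi\xi^{2}}$, the substitution $u=\sqrt{\pi}\,\xi$ together with the Rodrigues formula $\tfrac{d^{n}}{du^{n}}e^{-u^{2}}=(-1)^{n}H_{n}(u)e^{-u^{2}}$ (which is precisely the definition of $H_{n}$ adopted in the paper) produces the Hermite polynomial, and the scalar $\left(-1/(2\pi i)\right)^{n}\pi^{n/2}=\left(i/(2\sqrt{\pi})\right)^{n}$ matches the stated normalization. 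The route differs from the paper only in that the paper does not prove (4) at all: it simply remarks that (1)--(3) are standard and that (4) "can be derived from" an external result (Lemma~4.5 of the Funke--Millson paper cited as \cite{FM-res}). Your argument is therefore more self-contained, and it has the additional virtue of verifying the constant directly against the paper's own Rodrigues-type definition of $H_{n}$, which is exactly the point where a normalization error could otherwise creep in unnoticed. The only implicit hypothesis worth a word is that the iteration of (2) is justified because $x^{n-1}e^{-\pi x^{2}}$ is Schwartz, so differentiation under the integral sign is legitimate at every step; this is immediate but should be said.
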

\begin{proof}
(1)-(3) are of course standard. (4) can be derived from \cite[Lemma~4.5]{FM-res}. 
\end{proof}

\begin{lemma}\label{Final Fourier}
Let $A,B,C,D,E,F,G \in \mathbb{C}, \mathrm{Im}(A) >0$. Then the Fourier transform of $(G+F x)(E+D x)^{k-1} e(A x^2 + B x +C)$ is given by\footnote{Here the sum over $j$ is for $0 \leq j \leq \max(k-1,1)$. This convention holds throughout the text.} 
\begin{multline}\label{final fourier equation}  
\left(\frac{i}{2 A } \right)^{k/2} \left(\frac{ D i } {2 \sqrt{\pi} }\right)^{k-1}  \sum_{j}  \left(G -F\left(\frac{\xi+B}{2A} \right) \right)^{1-j} \left(\frac{F  (k-1) }{i \sqrt{-2 \pi A i}} \right)^{j}
\\   \times H_{k-1-j}\left(i \sqrt{-2 \pi A i} \left(\frac{\xi + B}{2 A } - \frac{E }{D} \right) \right) e\left(C -\frac{(\xi+B)^2}{4A} \right).
\end{multline}
\end{lemma}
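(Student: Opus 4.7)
The plan is to proceed in three stages: (i) absorb the linear phase $e(Bx)$ by a frequency shift of the Fourier transform, (ii) complete the square to reduce to Gaussian moments centered at the origin, and (iii) apply the Hermite-polynomial formula in Lemma~\ref{Easy Fouriers}(4) together with the three-term Hermite recursion.

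First, observe that $\hat g(\xi) = e(C)\,\hat h(\xi+B)$ with $h(x) \coloneqq (G+Fx)(E+Dx)^{k-1} e(Ax^2)$, so it suffices to compute $\hat h(\eta)$ and then substitute $\eta = \xi+B$. Inside $\hat h(\eta) = \int (G+Fx)(E+Dx)^{k-1} e(Ax^2+\eta x)\,dx$ I complete the square via $y = x + \eta/(2A)$, which produces the global factor $e(-\eta^2/(4A))$. Setting $\beta \coloneqq \eta/(2A) - E/D$ and $\gamma \coloneqq G - FE/D$, a short calculation gives $(E+Dx) = D(y-\beta)$ and $(G+Fx) = \gamma + F(y-\beta)$, where crucially $\gamma$ is independent of $\eta$. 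Hence
\[
\hat h(\eta) = D^{k-1} e(-\eta^2/(4A))\Bigl[\gamma \int (y-\beta)^{k-1} e(Ay^2)\,dy + F\int (y-\beta)^k e(Ay^2)\,dy\Bigr].
\]

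Each Gaussian moment $\int (y-\beta)^n e(Ay^2)\,dy$ becomes the Fourier transform of $z^n e(Az^2)$ evaluated at frequency $2A\beta$ after the substitution $z = y - \beta$; the spurious factor $e(A\beta^2)$ that appears cancels against the $e(-(\eta')^2/(4A))$ produced by the transform formula. Combining Lemma~\ref{Easy Fouriers}(3) and (4) yields
\[
\widehat{z^n e(Az^2)}(\eta') = (\sqrt{-2iA})^{-n-1}\Bigl(\tfrac{i}{2\sqrt{\pi}}\Bigr)^n H_n\!\left(\tfrac{\sqrt{\pi}\,\eta'}{\sqrt{-2iA}}\right) e(-(\eta')^2/(4A)),
\]
and the identity $\sqrt{-2\pi A i} = \sqrt{\pi}\sqrt{-2iA}$ (valid for $\mathrm{Im}(A) > 0$ with principal branches) simplifies the Hermite argument at $\eta' = 2A\beta$ to $\tilde u \coloneqq i\sqrt{-2\pi A i}\bigl((\xi+B)/(2A) - E/D\bigr)$, as required.

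Putting the pieces together, $\hat g(\xi)$ takes the shape $\bigl(\tfrac{i}{2A}\bigr)^{k/2}\bigl(\tfrac{Di}{2\sqrt{\pi}}\bigr)^{k-1} e\bigl(C - (\xi+B)^2/(4A)\bigr) \bigl[\gamma H_{k-1}(\tilde u) + Fc\,H_k(\tilde u)\bigr]$ with $c \coloneqq i/(2\sqrt{\pi}\sqrt{-2iA})$, using $(i/(2A))^{k/2} = (\sqrt{-2iA})^{-k}$ to collapse constants. The final step is to apply the recursion $H_k(\tilde u) = 2\tilde u\, H_{k-1}(\tilde u) - 2(k-1)H_{k-2}(\tilde u)$ and verify the two key simplifications $2Fc\tilde u = -F\beta$ (hence $\gamma + 2Fc\tilde u = G - F(\xi+B)/(2A)$) and $-2(k-1)Fc = F(k-1)/(i\sqrt{-2\pi A i})$. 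These reproduce exactly the $j=0$ and $j=1$ summands of the claimed formula. The main obstacle throughout is the disciplined bookkeeping of square roots and powers of $i$ with compatible branch choices, so that the prefactors collapse as advertised; modulo this, the computation reduces to a sequence of substitutions and a single application of the Hermite recursion.
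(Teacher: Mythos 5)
Your proof is correct and reaches the stated formula, including the prefactor $\bigl(\tfrac{i}{2A}\bigr)^{k/2}\bigl(\tfrac{Di}{2\sqrt{\pi}}\bigr)^{k-1}$ and the two surviving summands $j=0,1$, but the mechanics differ from the paper's at both stages. The paper first computes the transform of $(E+Dx)^{k-1}e(Ax^2+Bx+C)$ by exhibiting it as an affine precomposition of a single Hermite-times-standard-Gaussian function, using the binomial theorem together with the addition formula $H_n(x+y)=\sum_m\binom{n}{m}H_m(x)(2y)^{n-m}$ to collapse the polynomial into one shifted $H_{k-1}$; it then produces the extra factor $(G+Fx)$ via the multiplication-by-$x$ rule of Lemma~\ref{Easy Fouriers}(2) and $H_n'=2nH_{n-1}$, the two $j$-terms arising from differentiating the Hermite factor and the Gaussian phase respectively. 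You instead complete the square inside the integral, expand $(G+Fx)(E+Dx)^{k-1}$ as $D^{k-1}[\gamma(y-\beta)^{k-1}+F(y-\beta)^k]$, evaluate each centered Gaussian moment as a transform of $z^ne(Az^2)$ at frequency $2A\beta$, and recombine via the three-term recursion $H_k=2xH_{k-1}-2(k-1)H_{k-2}$; this trades the addition theorem and the derivative rule for the recursion and some explicit book-keeping of $\gamma$, $\beta$, and the cancelling phase $e(A\beta^2)$. The two routes are of comparable length and both hinge on Lemma~\ref{Easy Fouriers}(4); yours has the mild advantage of never differentiating in $\xi$, while the paper's isolates the $k$-dependence in a single closed-form transform before introducing $G+Fx$. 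One point you share with the paper and rightly flag: the dilation rule of Lemma~\ref{Easy Fouriers}(3) is stated for real scaling, and its use with the complex factor $\sqrt{-2iA}$ implicitly relies on a contour rotation justified by $\mathrm{Im}(A)>0$; making that explicit would tighten the argument but is not a gap relative to the paper's own standard.
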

\begin{proof}
We let $f(x) \coloneqq \left(E- \frac{D B}{2 A }+\frac{Dx}{\sqrt{-2 A i}}\right)^{k-1} e\left(\tfrac{2Ai x^2- B^2 + 4AC }{4A} \right).$
Then we can use the binomial theorem, Lemma \ref{Easy Fouriers} (4) and the identity $H_{n}(x+y) = \sum_{m=0}^{n} \binom{n}{m}H_{m}(x)(2 y )^{n-m}$ to find that $\hat{f}( \xi)$ is equal to
\begin{align*}   & \sum_{n=0}^{k-1} \tbinom{k-1}{n} \left(\tfrac{\sqrt{-8 \pi A i}}{ D i}\left(E-\tfrac{DB}{2A} \right) \right)^{k-1-n} \left(\tfrac{ D i}{\sqrt{-8 \pi A i} } \right)^{k-1} H_{n}(\sqrt{\pi} \xi) e\left(\tfrac{2Ai \xi^2 - B^2 + 4AC }{4A} \right)
\\ = {} & \left(\tfrac{ D i} { \sqrt{-8 \pi A i} }\right)^{k-1}H_{k-1}\left(\sqrt{\pi} \xi + \left(E- \tfrac{D B}{2 A }\right) \tfrac{ \sqrt{-2 \pi A i}}{D i}\right)  e\left(\tfrac{2Ai \xi^2 - B^2 + 4AC }{4A} \right).
\end{align*}
We observe that $ f\left(\frac{-2 A i x - B i}{\sqrt{-2A i}}\right) = (E+Dx)^{k-1} e(A x^2 + B x +C)$. So the Fourier transform of $(E+Dx)^{k-1}e(Ax^{2}+Bx+C)$ can be found by  using Lemma \ref{Easy Fouriers} (1)(3)  to obtain
\[ \left(\tfrac{i}{2 A } \right)^{k/2} \left(\tfrac{ D i } {2 \sqrt{\pi} }\right)^{k-1}H_{k-1}\left(i \sqrt{-2 \pi A i} \left(\tfrac{\xi + B}{2 A } - \tfrac{E }{D} \right) \right)
 e\left(C -\tfrac{(\xi+B)^2}{4A} \right).
\] The stated result then follows using Lemma \ref{Easy Fouriers} (2) and $H'_{n}(x) = 2nH_{n-1}(x)$.
\end{proof}
\subsection{A Theta Function on the Sublattice}
We rewrite our kernel function as a sum of some specific Siegel theta functions defined on the sublattice $K$.
\begin{definition}\label{Hermite theta}
Let $\alpha, \beta \in \mathbb{Z}, h \in K'/K, \mu_{K} \in K \otimes_{\mathbb{Z}} \mathbb{R}$. Then for $ \kappa \geq 0$ we define
\begin{multline*} \xi_{\kappa,h}(\tau, \mu_{K}, \alpha, \beta) {} \coloneqq {}  v^{-\kappa/2}\sum_{\substack{\lambda \in K+rh \\ t(\Delta) \\ Q(\lambda - \beta l' +tl) \equiv \Delta Q(h)(\Delta)}} H_{\kappa} \left(\frac{ \sqrt{ \pi}(\alpha -\beta \overline{\tau} -2|D| v  (\lambda+\beta\mu_{K},\mathfrak{w}^{\perp}))}{ \sqrt{2|\Delta|v Q_{z}(l)}} \right)
\\  \times     \chi_{\Delta}(\lambda - \beta l' +tl) e\left(\frac{-\alpha t} {|\Delta|} \right)e\left(\frac{Q(\lambda + \beta \mu_{K})\tau}{|\Delta|} - \frac{(\lambda + \beta \mu_{K}/2,\alpha\mu_{K})}{|\Delta|}\right),
\end{multline*}
and a $\mathbb{C}[K'/K]$-valued version $\Xi_{\kappa}(\tau,\mu_{K}, \alpha, \beta) \coloneqq\sum_{h \in K'/K}\xi_{\kappa,h}(\tau, \mu_{K}, \alpha, \beta) \mathfrak{e}_{h}.$
\end{definition}
\begin{lemma}\label{Xi-trafo}
For any $(\gamma, \phi_{\gamma}) \in \tilde{\Gamma}, \gamma = \left( \begin{smallmatrix} a&b \\ c&d \end{smallmatrix} \right)$ then
\[ \Xi_{\kappa}(\gamma\tau, \mu_{K} ,a\alpha +b \beta, c \alpha + d\beta) = \phi_{\gamma}(\tau)^{1+2\kappa}\tilde{\rho}_{K}(\gamma,\phi_{\gamma})\Xi_{\kappa}(\tau, \mu_{K}, \alpha, \beta). 
\]
\end{lemma}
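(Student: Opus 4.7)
The plan is to verify the transformation law on the generators $T$ and $S$ of $\tilde\Gamma$, exploiting the fact that $\Xi_\kappa$ is essentially a $\chi_\Delta$-twisted Siegel theta function on the positive definite $1$-dimensional lattice $K$ with the Hermite polynomial serving as a harmonic polynomial of bidegree $(\kappa,0)$ on $\mathbb{R}^{1,0}$. Indeed, the factors $v^{-\kappa/2}$, the Gaussian $e\!\left(Q(\lambda+\beta\mu_K)\tau/|\Delta|\right)$, the characteristics $\alpha\mu_K$, and the sum over $t\pmod{\Delta}$ together package this into a form to which the general transformation formula \eqref{theta transform general} applies, specialized to $b^+=1$, $b^-=0$, $m^+=\kappa$, $m^-=0$. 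In that signature the predicted weight is $(b^+-b^-+2(m^+-m^-))/2=(1+2\kappa)/2$, matching the factor $\phi_\gamma(\tau)^{1+2\kappa}$ claimed in the lemma.

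For $T=\left(\left(\begin{smallmatrix}1&1\\0&1\end{smallmatrix}\right),1\right)$, where $(\alpha,\beta)\mapsto(\alpha+\beta,\beta)$, the verification is a direct computation: the substitution $\tau\mapsto\tau+1$ produces the phase $e\!\left(Q(\lambda+\beta\mu_K)/|\Delta|\right)$, the Hermite argument is invariant because the shift $\alpha\mapsto\alpha+\beta$ exactly cancels the shift in $\beta\overline{\tau}$, and the pairing $(\lambda+\beta\mu_K/2,\alpha\mu_K)$ acquires a $\beta$-dependent phase. A bookkeeping check, combined with the condition $Q(\lambda-\beta l'+tl)\equiv\Delta Q(h)\pmod{\Delta}$ and the formula $\tilde\rho_K(T)\mathfrak{e}_h=e(\tilde Q(h))\mathfrak{e}_h$, yields the required identity.

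The substantive case is $S=\left(\left(\begin{smallmatrix}0&-1\\1&0\end{smallmatrix}\right),\sqrt\tau\right)$, where $(\alpha,\beta)\mapsto(-\beta,\alpha)$ and one must apply Poisson summation in the lattice variable $\lambda\in K+rh$. Completing the square in $\lambda$ (using Lemma \ref{mu facts} to rewrite $(\lambda,l_{z^\perp}-l_z)$ in terms of $(\lambda,\mathfrak{w}^\perp)$ and $\mu_K$) and applying Lemma \ref{Easy Fouriers}(4) — which recognises $H_\kappa(\sqrt\pi\xi)e^{-\pi\xi^2}$ as the Fourier transform of $(2i\sqrt\pi)^{-\kappa}x^\kappa e^{-\pi x^2}$ (after the appropriate linear change of variable, in the spirit of the more general Lemma \ref{Final Fourier}) — converts the sum over $K+rh$ into a sum over $(K+rh)^\vee$. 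Poisson summation produces exactly $\Xi_\kappa(-1/\tau,\mu_K,-\beta,\alpha)$ up to the predicted factor $\phi_\gamma(\tau)^{1+2\kappa}=(\sqrt\tau)^{1+2\kappa}$ and the expected Weil representation matrix for $S$.

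The main obstacle is reconciling the genus character $\chi_\Delta$ and the auxiliary sum over $t\pmod\Delta$ with Poisson summation, so that the final sum is organized under the twisted representation $\tilde\rho_K$ rather than $\rho_K$. The key point is that $\chi_\Delta(\lambda)$ depends on $\lambda\in L'$ only modulo $\Delta L$, which allows one to encode the character twist as a finite character sum on the cover $K/\Delta K$; the exponential $e(-\alpha t/|\Delta|)$ precisely implements the dual Poisson variable of this finite sum. After interchanging the Poisson dualities for $K$ and for $\mathbb{Z}/\Delta\mathbb{Z}$, the standard formula for the $S$-action on the twisted Weil representation from \cite[Section~3]{AE} (see also \cite[Section~3]{AS}, as invoked in the proof of Proposition \ref{kernel tau transformation}) yields the claim. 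Combining the verifications on $T$ and $S$ and noting that they generate $\tilde\Gamma$ completes the proof.
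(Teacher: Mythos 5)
Your overall strategy (verify the law on the generators $T$ and $S$, with Poisson summation in the lattice variable for $S$) is a legitimate alternative in principle, but the step you lean on to control the weight is not available: there are no nonzero harmonic homogeneous polynomials of degree $\kappa\geq 2$ on $\mathbb{R}^{1,0}$, since on a one-dimensional space the Laplacian annihilates only polynomials of degree at most one. Consequently $\Xi_{\kappa}$ for $\kappa\geq 2$ is \emph{not} an instance of the Siegel theta function to which \eqref{theta transform general} applies with $(m^{+},m^{-})=(\kappa,0)$; the factor $v^{-\kappa/2}H_{\kappa}(\cdots)$ is not of the form $p(\sigma(\lambda+\beta))$ with $p$ harmonic homogeneous, and the exponent $1+2\kappa$ cannot be read off from that formula. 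This is precisely the regime where the lemma has content, so the appeal to \eqref{theta transform general} leaves a genuine gap. The direct Poisson-summation route for $S$ is also more delicate than you indicate: the coefficient of $\lambda$ inside the Hermite polynomial and the width of the Gaussian coming from $e(Q(\lambda+\beta\mu_{K})\tau/|\Delta|)$ are not matched in the way required for the naive ``Hermite functions are Fourier eigenfunctions'' statement, so one would have to redo a computation of the strength of Lemma \ref{Final Fourier} (Fourier transform of a Hermite polynomial in a shifted, differently scaled variable times a complex Gaussian) and then track the $t\pmod{\Delta}$ character sum through it. None of this is impossible, but as written the proposal does not establish the transformation law for $\kappa\geq 2$.

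The paper's proof avoids all of this: it takes the cases $\kappa=0,1$ as known (these \emph{are} genuine unary theta series with harmonic polynomial of degree $0$ or $1$, cf.\ \cite[Equation~4.5]{BO-Annals}), and then bootstraps to general $\kappa$ by observing that the Maass raising operator sends $v^{-(\kappa-2)/2}H_{\kappa-2}(a\sqrt{v})e^{ia^{2}\tau/2}$ to $-\tfrac14 v^{-\kappa/2}H_{\kappa}(a\sqrt{v})e^{ia^{2}\tau/2}$, i.e.\ $R_{\kappa-3/2}\Xi_{\kappa-2}=-\tfrac14\Xi_{\kappa}$ (compare Lemma \ref{Raise lower xi}). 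Since $R_{\kappa-3/2}$ intertwines the weight $\kappa-3/2$ and weight $\kappa+1/2$ slash actions, the transformation law for $\Xi_{\kappa}$ follows by induction with no further Fourier analysis. If you want to keep your generator-by-generator approach, you should either replace the appeal to \eqref{theta transform general} by this raising-operator identity, or carry out the $S$-transformation honestly via the analogue of Lemma \ref{Final Fourier}.
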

\begin{proof} For $\kappa=0,1$, this is just the transformation property of the usual unary theta series, see eg, \cite[Equation~4.5]{BO-Annals}. The general case follows by applying the Maass raising operator, observing
\begin{align*}
 & R_{\kappa-3/2}\left[v^{-(\kappa-2)/2} H_{\kappa-2}(a \sqrt{v})e^{i a^{2}  \tau/2}\right]  \\ {} = {} & \left(\left(\tfrac{\kappa-1}{2v}-a^{2} \right)H_{\kappa-2}(a \sqrt{v})+ \tfrac{(\kappa-2)a}{\sqrt{v}} H_{\kappa-3}(a \sqrt{v}) \right) v^{1-\kappa/2} e^{i a^{2}  \tau/2}  
\\ {} = {} &  \left( (\kappa-1) H_{\kappa-2}(a \sqrt{v})- a \sqrt{v} H_{\kappa-1}(a \sqrt{v}) \right) v^{-\kappa/2} \frac{e^{i a^{2}  \tau/2}}{2}  
 {} = {}   -H_{\kappa}(a \sqrt{v}) v^{-k/2} \frac{e^{i a^{2} \tau/2}}{4}. \qedhere
\end{align*} 
\end{proof}
 
 We note

\begin{lemma}\label{Rewritten Theta}

\begin{multline*} 
\xi_{\kappa, h}(\tau, \mu_{K}, -n, 0) \\
=
  \left( \frac{\Delta}{n} \right)\epsilon_{\Delta}|\Delta|^{1/2} v^{-\kappa/2} \hspace{-.4cm}\sum_{\substack{\lambda \in K +rh \\ Q(\lambda ) \equiv \Delta Q(h)(\Delta)}} H_{\kappa}\left( \frac{\sqrt{\pi}(n-2|\Delta|v(\lambda,\mathfrak{w}^{\perp}))}{\sqrt{2|\Delta|vQ_{z}(l)}}\right)
 e\left(\frac{Q(\lambda)\tau}{|\Delta|} - \frac{(\lambda, n\mu_{K})}{|\Delta|}\right).
\end{multline*}
\end{lemma}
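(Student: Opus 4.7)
\smallskip
\noindent
\textbf{Proof plan.} The plan is to specialize the definition of $\xi_{\kappa,h}(\tau,\mu_K,\alpha,\beta)$ at $\alpha=-n$ and $\beta=0$ and then collapse the resulting inner sum over $t \pmod\Delta$ using a twisted Gauss sum identity for the generalized genus character. With $\beta=0$, every $\beta$-dependent term in the Hermite argument and in the exponential drops out: the term $-\beta\overline{\tau}$ in the Hermite numerator vanishes, $\lambda+\beta\mu_K$ reduces to $\lambda$, and $\lambda-\beta l'+tl$ reduces to $\lambda+tl$. The constraint $Q(\lambda - \beta l' + tl) \equiv \Delta Q(h)\pmod\Delta$ simplifies, via $Q(l)=0$, to the linear condition $Q(\lambda) + t(\lambda,l) \equiv \Delta Q(h)\pmod\Delta$. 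Applying $\lambda \mapsto -\lambda$ in the outer sum and absorbing the resulting signs from $H_\kappa(-x) = (-1)^\kappa H_\kappa(x)$ together with the known parity of $\chi_\Delta(-\lambda)$ produces the Hermite argument $\sqrt{\pi}\,(n-2|\Delta|v(\lambda,\mathfrak{w}^\perp))/\sqrt{2|\Delta|vQ_z(l)}$ and the exponential $e(Q(\lambda)\tau/|\Delta| - (\lambda,n\mu_K)/|\Delta|)$ appearing in the statement.

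After this simplification, everything that still depends on $t$ is packaged into the inner sum
\[
S(\lambda) \coloneqq \sum_{\substack{t\pmod\Delta \\ Q(\lambda)+t(\lambda,l)\equiv \Delta Q(h)(\Delta)}} \chi_\Delta(\lambda+tl)\, e\!\left(\tfrac{nt}{|\Delta|}\right).
\]
Using that $\chi_\Delta$ depends on its argument only modulo $\Delta L$ and factors (via the coprime-representation definition in Section 3) into $\chi_\Delta(\lambda)$ times a Kronecker symbol $\left(\tfrac{\Delta}{\bullet}\right)$ in a linear function of $t$, together with the classical quadratic Gauss sum
\[
\sum_{t\pmod\Delta}\left(\tfrac{\Delta}{t}\right) e\!\left(\tfrac{nt}{|\Delta|}\right) = \left(\tfrac{\Delta}{n}\right)\epsilon_\Delta\, |\Delta|^{1/2},
\]
one evaluates $S(\lambda) = \left(\tfrac{\Delta}{n}\right)\epsilon_\Delta|\Delta|^{1/2}\,\chi_\Delta(\lambda)$ whenever $Q(\lambda)\equiv \Delta Q(h)\pmod\Delta$, and $S(\lambda)=0$ otherwise. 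Inserting this into the outer sum isolates the asserted prefactor and restricts $\lambda \in K+rh$ to those with $Q(\lambda) \equiv \Delta Q(h) \pmod\Delta$, yielding the right-hand side of the lemma.

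The main obstacle is the Gauss-sum step: one must verify carefully that for fixed $\lambda\in K+rh$ the map $t\mapsto\chi_\Delta(\lambda+tl)$ reduces to a Kronecker symbol of a shifted linear form in $t$, so that the sum actually matches the classical quadratic Gauss sum. This requires the mod-$\Delta L$ invariance of $\chi_\Delta$ noted in Section 3, the multiplicativity encoded in its definition through binary quadratic forms coprime to $\Delta$, and the linear character of the constraint on $t$ coming from $Q(l)=0$; the decomposition \eqref{K decompostion} is useful for picking representatives. Once this Gauss-sum reduction is in place, the remaining bookkeeping (tracking the sign flip $\lambda\mapsto-\lambda$, the parity of $\chi_\Delta$ and $H_\kappa$, and the passage from the congruence on $t$ to the condition on $Q(\lambda)$) is routine.
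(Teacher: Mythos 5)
Your overall strategy --- specialize the definition of $\xi_{\kappa,h}$ at $(\alpha,\beta)=(-n,0)$ and collapse the inner sum over $t\pmod{\Delta}$ into a classical quadratic Gauss sum --- is exactly the route the paper takes (its proof is a one-line appeal to the Gauss sum of \cite[Equation~4.7]{BO-Annals}). The gap is in your evaluation of the inner sum. You claim that $t\mapsto\chi_{\Delta}(\lambda+tl)$ factors as $\chi_{\Delta}(\lambda)$ times a Kronecker symbol in a linear function of $t$, so that $S(\lambda)=\left(\tfrac{\Delta}{n}\right)\epsilon_{\Delta}|\Delta|^{1/2}\chi_{\Delta}(\lambda)$. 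This cannot be right: the right-hand side of the lemma carries no factor $\chi_{\Delta}(\lambda)$, and in fact $\chi_{\Delta}(\lambda)=0$ for every relevant $\lambda\in K+rh$ once $\Delta\neq 1$. Indeed, for $l=l_{\infty}$ an element of $K'$ corresponds to a binary form $[0,b,0]$; condition (1) in the definition of $\chi_{\Delta}$ forces $\Delta\mid b^{2}$, hence $\Delta\mid b$ for fundamental $\Delta$, and then $\gcd(a,b,c,\Delta)=\gcd(b,\Delta)\neq 1$, so condition (2) fails. With your factorization the entire theta series would vanish identically for $\Delta\neq 1$. The genus character admits no such product formula: $\lambda$ and $\lambda+tl$ are simply different forms. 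What is actually true, and what the paper's (slightly garbled) observation asserts, is that $t\mapsto\chi_{\Delta}(\lambda+tl)$ \emph{is} essentially the Kronecker symbol with no $\chi_{\Delta}(\lambda)$ in front: the form attached to $\lambda+tl$ is $[-N_{1}t,\,b,\,0]$, whose values $x(-N_{1}tx+by)\equiv -N_{1}tx^{2}\pmod{\Delta}$ because $\Delta\mid b$, so $\chi_{\Delta}(\lambda+tl)=\left(\tfrac{\Delta}{-N_{1}t}\right)$, nonzero exactly when $\gcd(t,\Delta)=1$, and one must then dispose of the residual unit $\left(\tfrac{\Delta}{-N_{1}}\right)$. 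This explicit computation, not a multiplicativity property, is the content of the step you yourself flagged as the main obstacle.

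Two smaller points. Since $\lambda\in K+rh$ lies in $l^{\perp}$ you have $(\lambda,l)=0$, so the congruence constraint reduces to $Q(\lambda)\equiv\Delta Q(h)\pmod{\Delta}$ with no $t$-dependence at all; this is what makes the inner sum a full Gauss sum over $t\pmod{\Delta}$ rather than a restricted one, and should be said rather than left as ``$Q(\lambda)+t(\lambda,l)\equiv\cdots$''. Also, your reindexing $\lambda\mapsto-\lambda$ sends the coset $K+rh$ to $K-rh$, and once the Gauss sum has been performed there is no $\chi_{\Delta}$ left to absorb the factor $(-1)^{\kappa}$ coming from $H_{\kappa}(-x)=(-1)^{\kappa}H_{\kappa}(x)$; the sign bookkeeping has to be carried out jointly with a reindexing $t\mapsto-t$ of the Gauss sum (contributing $\left(\tfrac{\Delta}{-1}\right)=\sgn(\Delta)$) and the symmetry $h\mapsto-h$, and is less routine than you suggest.
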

\begin{proof}
Follows easily after observing that if $\lambda \in K +rh, Q(\lambda) \equiv \Delta Q(h) (\Delta)$ then $\chi_{\Delta}(\lambda + tl)= \left(\frac{\Delta}{n} \right)$ so we obtain the stated result by using the Gauss sum in \cite[Equation~4.7]{BO-Annals}.\end{proof} $\epsilon_{\Delta}$ is defined to equal $1$ if $\Delta>0$ or $i$ if $\Delta<0$. $\left(\frac{\Delta}{0}\right) = 0$ if $\Delta \neq 1$ and if $\Delta = 1$ then $\left(\frac{\Delta}{0}\right) = 1$. This means that $\xi_{\kappa, h}(\tau, 0, 0, 0) = 0$ unless $\Delta = 1$. We finally record

\begin{lemma}\label{Raise lower xi} Let $\kappa \in \mathbb{Z}, \kappa \geq 2$ and $\Delta=1$. Then 
\begin{align*}
R_{k-3/2}\Xi_{\kappa-2}(\tau,0,0,0) & = -\frac{1}{4}\Xi_{\kappa}(\tau,0,0,0),
\\ L_{\kappa+1/2}\Xi_{\kappa}(\tau,0,0,0) & = \kappa(\kappa-1)\Xi_{\kappa-2}(\tau,0,0,0).
\end{align*}
\end{lemma}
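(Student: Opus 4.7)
The plan is to reduce both identities to term-by-term computations on the summands of $\Xi_\kappa(\tau,0,0,0)$, using the explicit form supplied by Lemma~\ref{Rewritten Theta} and the standard Hermite polynomial identities
\[
H'_n(x)=2nH_{n-1}(x), \qquad H_n(x)=2xH_{n-1}(x)-2(n-1)H_{n-2}(x).
\]
Since $\Delta=1$, the Gauss sum factor $(\Delta/n)$ in Lemma~\ref{Rewritten Theta} is $1$ for all $n$, so the constraint $\beta=0$, $\alpha=-n=0$ gives
\[
\xi_{\kappa,h}(\tau,0,0,0)=v^{-\kappa/2}\!\!\sum_{\lambda\in K+rh}\!\!H_{\kappa}\!\left(a(\lambda)\sqrt{v}\right) e\!\left(Q(\lambda)\tau\right),
\]
where $a(\lambda)\coloneqq -\sqrt{2\pi/Q_{z}(l)}\,(\lambda,\mathfrak{w}^{\perp})$ is independent of $\tau$. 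The key identity $(\lambda,\mathfrak{w}^{\perp})^{2}/(2Q_{z}(l))=Q(\lambda)$ (valid because $\lambda\in K\subset l^{\perp}$, recorded just before Lemma~\ref{mu facts}) gives $a(\lambda)^{2}/2=2\pi Q(\lambda)$, so each summand is precisely of the shape $v^{-\kappa/2}H_{\kappa}(a\sqrt{v})e^{ia^{2}\tau/2}$ appearing in the proof of Lemma~\ref{Xi-trafo}.

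For the raising identity, the term-wise calculation carried out in the proof of Lemma~\ref{Xi-trafo} already establishes
\[
R_{\kappa-3/2}\!\left[v^{-(\kappa-2)/2}H_{\kappa-2}(a\sqrt{v})e^{ia^{2}\tau/2}\right] = -\tfrac{1}{4}\,v^{-\kappa/2}H_{\kappa}(a\sqrt{v})e^{ia^{2}\tau/2},
\]
and summing over $\lambda\in K+rh$ and $h\in K'/K$ yields the first claim.

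For the lowering identity, one computes $L_{\kappa+1/2}=-2iv^{2}\partial/\partial\overline{\tau}$ on each summand. Since $e(Q(\lambda)\tau)$ is holomorphic and $v^{-\kappa/2}H_{\kappa}(a\sqrt{v})$ depends only on $v$, the $\partial/\partial\overline{\tau}$ acts as $(i/2)\partial/\partial v$ on the latter factor. A direct differentiation, using $H'_{\kappa}=2\kappa H_{\kappa-1}$, gives
\[
\tfrac{d}{dv}\!\left(v^{-\kappa/2}H_{\kappa}(a\sqrt{v})\right) = v^{-\kappa/2-1}\!\left[-\tfrac{\kappa}{2}H_{\kappa}(a\sqrt{v}) + \kappa a\sqrt{v}\,H_{\kappa-1}(a\sqrt{v})\right],
\]
so after multiplication by $-2iv^{2}\cdot(i/2)=v^{2}$ one needs the algebraic identity
\[
-\tfrac{\kappa}{2}H_{\kappa}(x)+\kappa x\,H_{\kappa-1}(x) = \kappa(\kappa-1)H_{\kappa-2}(x),
\]
which follows immediately from the three-term recurrence. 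Putting this together shows $L_{\kappa+1/2}$ sends each summand to $\kappa(\kappa-1)$ times the corresponding summand of $\xi_{\kappa-2,h}(\tau,0,0,0)$, giving the second identity.

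The step most prone to bookkeeping errors is the lowering calculation: one must track the normalizations in $H_\kappa(a\sqrt{v})$ and the derivative of $v^{-\kappa/2}$ carefully so that the three-term recurrence is applied cleanly. The raising identity, by contrast, is essentially free once one matches summands against the computation already performed in Lemma~\ref{Xi-trafo}.
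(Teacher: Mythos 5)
Your proposal is correct and follows essentially the same route as the paper: the raising identity is read off from the term-wise computation already performed in the proof of Lemma~\ref{Xi-trafo}, and the lowering identity comes from differentiating $v^{-\kappa/2}H_{\kappa}(a\sqrt{v})$ in $v$ and applying the Hermite three-term recurrence. Your additional bookkeeping (the identification $a(\lambda)^2/2=2\pi Q(\lambda)$ and the reduction of $\partial/\partial\overline{\tau}$ to $(i/2)\,\partial/\partial v$ on the $v$-dependent factor) just makes explicit what the paper leaves implicit.
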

\begin{proof}
The first claim follows from the proof of Lemma~\ref{Xi-trafo}, while the second from
\begin{align*}
 \frac{\partial}{\partial v}\left( H_{\kappa}(a \sqrt{v}) v^{-\kappa/2} \right) & = -\frac{\kappa}{2 v^{(\kappa+2)/2}} \left(H_{\kappa}(a \sqrt{v})-2a\sqrt{v}H_{\kappa-1}(a \sqrt{v})  \right) = \frac{\kappa(\kappa-1)}{ v^{(\kappa+2)/2}}H_{k-2}(a\sqrt{v}).
\end{align*}
\end{proof}

\subsection{The Poincar\'e Series}
The next few technical lemmas involve rewriting our kernel function in various forms with the aim of writing it as a Poincar\'{e} series in Theorem \ref{Poincare Series}. We will use the following constant \begin{equation}\label{big constant} c_{z,k,j} =\frac{i}{2\sqrt{2|\Delta|}Q_{z}(l)} \left( \frac{q_{z}(l)i \sqrt{|\Delta|}}{2\sqrt{2 \pi Q_{z}(l)}}\right)^{k-1}\left(\frac{(1-k)\sqrt{2|\Delta|Q_{z}(l)}}{\sqrt{\pi}} \right)^{j}.
\end{equation} Next we apply partial Poisson summation, see also \cite[Lemma~5.1]{Borcherds}, \cite[Lemma~4.6]{BO-Annals}, and \cite[Lemma~2.3]{B-Habil}.
\begin{lemma}\label{Big Lemma} We have 
\begin{multline*}
 \theta_{h,k}(\tau,z) {} = {}  \sum_{\lambda \in L/\mathbb{Z}\Delta l +rh} \sum_{\substack{t(\Delta) \\ Q(\lambda+tl) \equiv \Delta Q(h)(\Delta)}}\sum_{d \in \mathbb{Z}} \sum_{j}  \chi_{\Delta}(\lambda + t l) c_{z,k,j}    v^{-(k-1-j)/2}    \left(d+(\lambda,l)\tau \right)^{1-j}  \\  \times  H_{k-1-j} \left(\tfrac{ \sqrt{ \pi}(d + (\lambda,l) \overline{\tau} -2 v  (\lambda,\mathfrak{w}^{\perp}))}{\sqrt{2|\Delta|v Q_{z}(l)}} \right) e\left( -\tfrac{dt}{|\Delta|} \right)  e\left(\tfrac{\tau Q(\lambda_{w^{\perp}}) }{|\Delta|}  - \tfrac{d(\lambda, l_{z^{\perp}} - l_{z})}{2 |\Delta|Q_{z}(l)} - \tfrac{|d + (\lambda, l)\tau|^2}{4|\Delta|ivQ_{z}(l)} \right).\end{multline*}
\end{lemma}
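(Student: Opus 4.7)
The plan is a partial Poisson summation in the direction of the isotropic vector $l$, in the spirit of \cite[Lemma~5.1]{Borcherds} and \cite[Lemma~4.6]{BO-Annals}, but carrying the weight-$k$ polynomial factor $p_z(\mu)q_z(\mu)^{k-1}$ through the Fourier transform. First I reparametrize the sum over $\mu \in L + rh$. By the decomposition $L = K \oplus \mathbb{Z}l' \oplus \mathbb{Z}l$ from \eqref{K decompostion}, each such $\mu$ has a well-defined $l$-component $m \in \mathbb{Z}$, which I write as $m = t + \Delta n$ with $t \in \mathbb{Z}/\Delta\mathbb{Z}$ and $n \in \mathbb{Z}$. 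This expresses $\mu = \lambda + tl + \Delta n l$ with $\lambda$ a representative of a coset in $(L + rh)/\mathbb{Z}\Delta l$. Since $Q(l) = 0$, one has $Q(\mu) = Q(\lambda+tl) + \Delta n(\lambda,l) \equiv Q(\lambda + tl) \pmod{\Delta}$, so the congruence $Q(\mu) \equiv \Delta Q(h) \pmod{\Delta}$ reduces to the inner $(\lambda, t)$-condition in the target formula, with $n \in \mathbb{Z}$ free for Poisson summation. The invariance of $\chi_\Delta$ modulo $\Delta L$ yields $\chi_\Delta(\mu) = \chi_\Delta(\lambda + tl)$ independently of $n$.

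Next I isolate the $n$-dependence for fixed $(\lambda, t)$. Setting $\mu(n) := \lambda + tl + \Delta n l$, the linearity identities $p_z(\mu(n)) = p_z(\lambda+tl) + \Delta n\, p_z(l)$ and $q_z(\mu(n)) = q_z(\lambda+tl) + \Delta n\, q_z(l)$ show that $p_z(\mu(n))q_z(\mu(n))^{k-1}$ has exactly the form $(G + Fn)(E + Dn)^{k-1}$. Using $Q(l_z) + Q(l_{z^{\perp}}) = Q(l) = 0$, one finds that $(Q(\mu(n))u + Q_z(\mu(n))iv)/|\Delta|$ is a quadratic $An^2 + Bn + C$ in $n$ with leading coefficient $A = |\Delta| Q_z(l) iv$. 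The $n$-summand therefore fits the hypothesis of Lemma~\ref{Final Fourier}, and applying Poisson summation $\sum_{n \in \mathbb{Z}} f(n) = \sum_{d \in \mathbb{Z}} \hat f(d)$ with its explicit Fourier transform yields a sum over $d \in \mathbb{Z}$ and $j$ with Hermite polynomials $H_{k-1-j}$. The rescaling $n \mapsto \Delta n$ in the Fourier integral contributes the twist $e(-dt/|\Delta|)$ appearing in the target formula, and the remaining prefactors from Lemma~\ref{Final Fourier} (substituting the values of $A$, $D = \Delta q_z(l)$, $F = \Delta p_z(l)$) consolidate into the constant $c_{z,k,j}$ of \eqref{big constant}.

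Finally, I simplify the Hermite argument and the post-Poisson exponent using Lemma~\ref{mu facts} together with the decomposition $V(\mathbb{R}) = \mathbb{R}l_z \oplus \mathbb{R}l_{z^{\perp}} \oplus w^{\perp}$. After substituting $A, B$, the Gaussian factor $e(-(\xi+B)^2/4A)$ from Lemma~\ref{Final Fourier} becomes $e\bigl(-|d + (\lambda,l)\tau|^2/(4|\Delta|iv Q_z(l))\bigr)$; expanding and using $(\lambda,l) = (\lambda_z, l_z) + (\lambda_{z^{\perp}}, l_{z^{\perp}})$, the linear cross term separates as $-d(\lambda, l_{z^{\perp}} - l_z)/(2|\Delta|Q_z(l))$, and the $d$-independent part collapses via $Q(\lambda + tl) = Q(\lambda_{w^{\perp}}) + \bigl(\text{terms absorbed into the }t\text{-twist}\bigr)$ to $\tau Q(\lambda_{w^{\perp}})/|\Delta|$. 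The Hermite argument $i\sqrt{-2\pi Ai}\bigl((\xi+B)/2A - E/D\bigr)$ simplifies, via the same identities, to $\sqrt{\pi}(d + (\lambda,l)\bar\tau - 2v(\lambda, \mathfrak{w}^{\perp}))/\sqrt{2|\Delta|v Q_z(l)}$. The main technical difficulty is precisely this algebraic bookkeeping: tracking the many factors of $|\Delta|$, $\Delta$ and $\sgn(\Delta)$ through the Poisson rescaling, and systematically converting between inner products on $K \otimes \mathbb{R}$ and on $V(\mathbb{R})$ using Lemma~\ref{mu facts}. Once these identifications are made, the target formula emerges by collecting terms.
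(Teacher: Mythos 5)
Your proposal is correct and takes essentially the same route as the paper: both decompose the lattice sum along the isotropic direction $l$, splitting the $l$-component into a residue $t$ mod $\Delta$ and a free integer, apply Poisson summation using the explicit Fourier transform of Lemma \ref{Final Fourier} to the polynomial-times-Gaussian summand, and then simplify the resulting Hermite arguments and exponents via Lemma \ref{mu facts} and the decomposition $V(\mathbb{R})=\mathbb{R}l_{z}\oplus\mathbb{R}l_{z^{\perp}}\oplus w^{\perp}$. The only differences are presentational (the paper Poisson-sums over the $\mathbb{Z}|\Delta|l$-translates first and splits off the residue $t$ afterwards, and the twist $e(-dt/|\Delta|)$ is most cleanly seen to arise from the linear term $(\lambda+tl,\,l_{z^{\perp}}-l_{z})=(\lambda,l_{z^{\perp}}-l_{z})+2tQ_{z}(l)$ rather than from the rescaling per se), so no substantive gap remains.
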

\begin{proof}
We rewrite the sum over $\lambda \in rh +L$ in the definition of $\theta_{h,k}(\tau,z)$  as a sum over $\lambda' + d|\Delta|l$. This is where $\lambda'$ runs over $rh +L/\mathbb{Z}\Delta l$ and $d$ runs over $\mathbb{Z}$. Noting that $\chi_{\Delta}(\lambda + d|\Delta|l) = \chi_{\Delta}(\lambda)$ and $Q(\lambda +d|\Delta|l) \equiv Q(\lambda)(\Delta)$ we then obtain
\begin{equation}\label{g form}
\theta_{h,k}(\tau,z) {} =  v^{3/2}  \sum_{\substack{\lambda \in L/\mathbb{Z}\Delta l +rh \\ Q(\lambda) \equiv \Delta Q(h)(\Delta) }} \chi_{\Delta}(\lambda) \sum_{d \in \mathbb{Z}}g\left(|\Delta|\tau, z, \frac{\lambda}{|\Delta|},k,d\right)
\end{equation}
where
\[ g(\tau, z , \lambda , k, d)  \coloneqq |\Delta|^{k}p_{z}(\lambda +d l)q_{z}(\lambda +dl)^{k-1}e\left(Q(\lambda +dl)u + Q_{z}(\lambda+dl)iv \right). 
\]
We notice
\[ Q(\lambda +dl)u + Q_{z}(\lambda+dl)iv = Ad^2 + Bd + C \]
where $A = Q(l_{z^{\perp}})(\tau - \overline{\tau}) = Q_{z}(l)iv, B = (\lambda, l_{z})\overline{\tau} + (\lambda, l_{z^{\perp}})\tau$ and $C = Q(\lambda_{z})\overline{\tau} + Q(\lambda_{z^{\perp}})\tau = Q(\lambda)u+Q_{z}(\lambda) iv$. We also set $D= q_{z}(l), E= q_{z}(\lambda), F= p_{z}(l)$ and $G= p_{z}(\lambda)$. Then the partial Fourier transform of $ g(\tau, z , \lambda , k, d)$ in $d$ can be found using Lemma \ref{Final Fourier} to obtain $\hat{g}(\tau, z, \lambda,k, d)$.

Using the Poisson summation formula on the variable $d$ we replace $g(|\Delta|\tau, z, \frac{\lambda}{|\Delta|},k,d)$ with $\hat{g}(|\Delta|\tau, z, \frac{\lambda}{|\Delta|},k,d)$ in \eqref{g form}. So then with the help of some simplifying identities given in \cite[Lemma~5.4.1]{Crawford} we obtain
\begin{multline*}
\theta_{h,k}(\tau,z)=   \sum_{\substack{\lambda \in L/\mathbb{Z}\Delta l +rh \\ Q(\lambda) \equiv \Delta Q(h)(\Delta) }} \sum_{d \in \mathbb{Z}} \sum_{j}  \chi_{\Delta}(\lambda)c_{z,k,j} v^{-(k-1-j)/2} \left(d+(\lambda,l)\tau \right)^{1-j}
\\  \times  H_{k-1-j} \left(\tfrac{ \sqrt{ \pi}(d + (\lambda,l) \overline{\tau} -2 v  (\lambda,\mathfrak{w}^{\perp}))}{\sqrt{2|\Delta|v Q_{z}(l)}} \right) e\left(\tfrac{\tau Q(\lambda_{w^{\perp}}) }{|\Delta|}  - \tfrac{d(\lambda, l_{z^{\perp}} - l_{z})}{2 |\Delta|Q_{z}(l)} - \tfrac{|d + (\lambda, l)\tau|^2}{4|\Delta|ivQ_{z}(l)} \right).
\end{multline*}

Finally we rewrite the sum over $\lambda \in L/ \mathbb{Z} Dl + rh$ as a sum over $\lambda' + tl$ where $\lambda'$ and $t$ run over $L/\mathbb{Z}l + rh$ and $\mathbb{Z}/ D{\mathbb{Z}}$ respectively to obtain the stated result.
\end{proof}

We now reformulate \ref{Big Lemma} in terms of the theta function (Definition \ref{Hermite theta}) on the sublattice, see also \cite[Theorem~5.2]{Borcherds}, \cite[Lemma~4.7]{BO-Annals} and \cite[Theorem~2.4]{B-Habil}.

\begin{proposition}\label{Rewritten Lemma}For $h \in L'/L \cong K'/K$,
\[ \theta_{h,k}(\tau,z) = \sum_{c,d \in \mathbb{Z}} \sum_{j}c_{z,k,j}(c\tau + d)^{1-j}
 e\left(- \frac{|c\tau+d|^2}{4|\Delta|iv Q_{z}(l)} \right) \xi_{k-1-j, h}(\tau, \mu_{K}, d, -c).
\]
\end{proposition}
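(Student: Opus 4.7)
My plan is to obtain Proposition \ref{Rewritten Lemma} by reindexing the sum in Lemma \ref{Big Lemma} along the decomposition \eqref{K decompostion} of $L$ relative to the primitive isotropic line $\mathbb{Z}l$. Concretely, I would split each $\lambda \in L/\mathbb{Z}l+rh$ uniquely as $\lambda = \lambda_K + cl'$ with $\lambda_K \in K+rh$ (via $K'/K \cong L'/L$) and $c \in \mathbb{Z}$. Since $(l,l')=1$ and $l$ is isotropic, this immediately yields $(\lambda,l)=c$, so the factor $(d+(\lambda,l)\tau)^{1-j}$ becomes $(c\tau+d)^{1-j}$. Setting $\alpha = d$ and $\beta = -c$ then matches the parameter convention of $\xi_{\kappa,h}(\tau,\mu_K,\alpha,\beta)$ in Definition \ref{Hermite theta}, and the inner sum over $\lambda \in L/\mathbb{Z}l+rh$ turns into an outer sum over $c\in\mathbb{Z}$ with an inner sum over $\lambda_K \in K+rh$.

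Next I would rewrite every remaining quantity in the summand of Lemma \ref{Big Lemma} in terms of $\lambda_K,c,\mu_K$ using the identities collected in Lemma \ref{mu facts}. Because $\mathfrak{w}^{\perp}$ spans the one-dimensional space $w^{\perp}$, the identity $(l')_{w^{\perp}} = -\mu_{w^{\perp}}$ from Lemma \ref{mu facts}(5) converts the pairing $(\lambda,\mathfrak{w}^{\perp})$ into $(\lambda_K+\beta\mu_K,\mathfrak{w}^{\perp})$, exactly the argument of the Hermite polynomial in $\xi_{\kappa,h}$. The same projection identity gives $\lambda_{w^{\perp}} = (\lambda_K + \beta\mu_K)_{w^{\perp}}$, so that $Q(\lambda_{w^{\perp}}) = Q(\lambda_K+\beta\mu_K)$ appears correctly in the exponential. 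Finally, Lemma \ref{mu facts}(7),(8) allow me to rewrite the cross term $-d(\lambda,l_{z^{\perp}}-l_z)/(2|\Delta|Q_z(l))$ in Lemma \ref{Big Lemma} as $-(\lambda_K+\beta\mu_K/2,\alpha\mu_K)/|\Delta|$, which is exactly the linear-phase term appearing in Definition \ref{Hermite theta}.

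The third step is essentially bookkeeping: the leftover sum over $t(\Delta)$ with the congruence $Q(\lambda_K-\beta l'+tl)\equiv\Delta Q(h)\,(\Delta)$ and the character $\chi_\Delta(\lambda_K-\beta l'+tl)$ already has the shape prescribed in Definition \ref{Hermite theta}; I would check that $\chi_\Delta$ is well-defined on the decomposition by recalling that $\chi_\Delta(\lambda)$ only depends on $\lambda$ modulo $\Delta L$, so the decomposition $\lambda = \lambda_K-\beta l' + tl$ is compatible with the twist. The factor $e(-|c\tau+d|^2/(4|\Delta|ivQ_z(l)))$ depends only on $c,d$, and so comes out of the inner sum and appears as the explicit exponential of Proposition \ref{Rewritten Lemma}; the factor $v^{-(k-1-j)/2}$ combines with the sign conventions to produce the $\xi_{k-1-j,h}$-normalization; and the constant $c_{z,k,j}$ is unchanged. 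Reassembling these pieces yields the Poincar\'e-series formula.

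I expect the main obstacle to be purely technical: carefully tracking the $|\Delta|$-factors that enter through the rescaling $\lambda \mapsto \lambda/|\Delta|$ used in the proof of Lemma \ref{Big Lemma}, and verifying that the various inner products involving $l,l',\mu_K,\mathfrak{w}^{\perp}$ and their $w^{\perp}$-projections line up without sign or scalar discrepancies. This is a routine but delicate exercise, entirely parallel to the partial Poisson arguments in \cite[Theorem~2.4]{B-Habil} and \cite[Lemma~4.7]{BO-Annals}; once all the identities of Lemma \ref{mu facts} are plugged in, the matching to Definition \ref{Hermite theta} is mechanical.
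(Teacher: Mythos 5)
Your proposal is correct and follows essentially the same route as the paper: the paper's proof likewise rewrites Lemma \ref{Big Lemma} via the identification $L/\mathbb{Z}l + rh \cong K + \mathbb{Z}l' + rh$, i.e.\ the substitution $\lambda \mapsto \lambda + cl'$ with $(l,l')=1$ and $(\lambda,l)=0$, and then matches terms against Definition \ref{Hermite theta} using the identities of Lemma \ref{mu facts}. Your write-up simply makes explicit the bookkeeping (which entries of Lemma \ref{mu facts} handle which factor, and the $\alpha=d$, $\beta=-c$ convention) that the paper leaves implicit.
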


\begin{proof} We use that $L/\mathbb{Z}l + rh \cong K + \mathbb{Z}l' + rh$ to rewrite Lemma \ref{Big Lemma} in terms of $\lambda \in K+rh.$ We do this by making the ``substitution" $\lambda\mapsto \lambda +cl'$, where now $\lambda \in K \otimes \mathbb{R}$ and $c \in \mathbb{Z}$. We have that $(l,l')=1$ and $(\lambda,l)=0$. Using this alongside the identities in Lemma \ref{mu facts} and  inserting the definition of $\xi_{h}(\tau, \mu_{K}, \alpha, \beta, n)$ we obtain the stated result. 
\end{proof}
\begin{theorem}\label{Poincare Series} We have 
\begin{multline*}
\Theta_{k}(\tau,z)  \\=  \frac{1}{2} \sum_{n \geq 1}\sum_{\tilde{\gamma} \in\tilde{\Gamma}_{\infty} \backslash \tilde{\Gamma}} \sum_{j}c_{z,k,j} (-n)^{1-j}
  \left[  e\left(- \tfrac{n^2}{4|\Delta|i \Im(\tau) Q_{z}(l)} \right) \Xi_{k-1-j}(\tau, \mu_{K}, -n, 0) \right] \bigg|_{k-3/2, \tilde{\rho}_{K}} \tilde{\gamma},
\end{multline*}
and if $k \geq 2$ we also have the additional term $c_{z,k,1} \Xi_{k-2}(\tau, 0, 0, 0).$ \end{theorem}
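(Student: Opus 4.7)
The plan is to start from Proposition~\ref{Rewritten Lemma}, which expresses $\theta_{h,k}(\tau,z)$ as a double sum over $(c,d)\in\mathbb{Z}^2$, and reorganize this sum into a Poincar\'e series indexed by $\tilde\Gamma_\infty\backslash\tilde\Gamma$. First I would separate the contribution of $(c,d)=(0,0)$. The factor $(c\tau+d)^{1-j}$ there becomes $0^{1-j}$, which vanishes except when $j=1$, in which case it equals $1$. Since the $\mu_K$-dependence of $\xi_{\kappa,h}$ in Definition~\ref{Hermite theta} enters only through the parameters $\alpha,\beta$, we have $\xi_{\kappa,h}(\tau,\mu_K,0,0)=\xi_{\kappa,h}(\tau,0,0,0)$, so the isolated contribution is exactly $c_{z,k,1}\,\Xi_{k-2}(\tau,0,0,0)$. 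For $k=1$ the coefficient $c_{z,1,1}$ contains the factor $(1-k)^{j}|_{j=1}=0$ and no such term appears, matching the statement.

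Next, for $(c,d)\neq(0,0)$ I would write uniquely $(c,d)=-n(c_0,d_0)$ with $n\geq 1$ and $(c_0,d_0)$ primitive; primitive pairs are in bijection with cosets $\Gamma_\infty\backslash\Gamma$ via the bottom row. Choose $\gamma\in\SL_2(\mathbb{Z})$ with bottom row $(c_0,d_0)$ and a metaplectic lift $\tilde\gamma=(\gamma,\phi_\gamma)$. Solving $(a\alpha+b\beta,\,c_0\alpha+d_0\beta)=(-n,0)$ gives $(\alpha,\beta)=(-nd_0,nc_0)$, and Lemma~\ref{Xi-trafo} then yields
\[
\Xi_{k-1-j}(\gamma\tau,\mu_K,-n,0)=\phi_\gamma(\tau)^{2k-1-2j}\,\tilde\rho_K(\tilde\gamma)\,\Xi_{k-1-j}(\tau,\mu_K,-nd_0,nc_0).
\]
Using $\Im(\gamma\tau)=v/|c_0\tau+d_0|^2$ and $\phi_\gamma(\tau)^2=c_0\tau+d_0$, a direct computation of the slash action shows
\[
\Bigl[e\!\left(-\tfrac{n^2}{4|\Delta|ivQ_z(l)}\right)\Xi_{k-1-j}(\tau,\mu_K,-n,0)\Bigr]\Big|_{k-3/2,\tilde\rho_K}\tilde\gamma
=(c_0\tau+d_0)^{1-j}\,e\!\left(-\tfrac{n^2|c_0\tau+d_0|^2}{4|\Delta|ivQ_z(l)}\right)\Xi_{k-1-j}(\tau,\mu_K,-nd_0,nc_0),
\]
where the $\tilde\rho_K$-factors cancel and the $\phi_\gamma$-powers collapse to $\phi_\gamma^{2-2j}=(c_0\tau+d_0)^{1-j}$. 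After multiplication by $c_{z,k,j}(-n)^{1-j}$ this matches exactly the $(c,d)=(-nc_0,-nd_0)$ summand of Proposition~\ref{Rewritten Lemma}, since $(d,-c)=(-nd_0,nc_0)$.

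Finally, as $n\geq 1$ and $(c_0,d_0)$ ranges over primitive pairs of either sign, $(-nc_0,-nd_0)$ traverses $\mathbb{Z}^2\setminus\{(0,0)\}$. Since $\tilde\Gamma_\infty$ contains only the distinguished lifts $(T^n,1)$, each coset in $\Gamma_\infty\backslash\Gamma$ has two distinct preimages in $\tilde\Gamma_\infty\backslash\tilde\Gamma$, corresponding to $\phi_\gamma\mapsto-\phi_\gamma$. The computation above shows both lifts give identical summands, because the $\phi_\gamma$-dependence reduces to the even power $\phi_\gamma^{2-2j}$ and the $\tilde\rho_K$-factors always cancel in conjugate pairs. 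This double counting is precisely compensated by the prefactor $\tfrac12$, and combining with the $(c,d)=(0,0)$ contribution gives the stated formula. The only delicate point, and the step that justifies the $\tfrac12$, is the verification that the two metaplectic lifts contribute identically; the rest is bookkeeping on the lattice sum from Proposition~\ref{Rewritten Lemma}.
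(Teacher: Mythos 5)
Your proposal is correct and follows essentially the same route as the paper: isolate the $(c,d)=(0,0)$ term of Proposition~\ref{Rewritten Lemma} (surviving only for $j=1$, $k\geq 2$), write the remaining pairs as $-n$ times a primitive pair identified with $\Gamma_{\infty}\backslash\Gamma$, and use Lemma~\ref{Xi-trafo} together with the slash action to reassemble the sum, with the factor $\tfrac12$ accounting for the two metaplectic lifts. Your verification that the two lifts contribute identically (even powers of $\phi_{\gamma}$ and cancelling $\tilde{\rho}_{K}$-factors) is exactly the justification the paper invokes, spelled out in slightly more detail.
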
 \begin{proof} We use Proposition \ref{Rewritten Lemma} to obtain that $\Theta_{k}(\tau,z)$ is equal to \begin{equation*}
 \sum_{n \geq 1}  \sum_{\substack{c,d \in \mathbb{Z} \\ (c,d) =1}} \sum_{j}c_{z,k,j} (-n)^{1-j}(c \tau+d)^{1-j} e\left(- \frac{n^2|d + c\tau|^2}{4|\Delta|iv Q_{z}(l)} \right) \Xi_{k-1-j}(\tau, \mu_{K}, -nd, nc).
\end{equation*} 
plus the stated additional term (when $c=0,d=0,j=1,k \geq 2$)

We know two elements  $ \left(\begin{smallmatrix} a & b \\ c & d \end{smallmatrix}\right),  \left(\begin{smallmatrix} a' & b' \\ c' & d' \end{smallmatrix}\right) \in \Gamma$ are equal in $\Gamma_{\infty} \backslash \Gamma$ if and only if $c=c'$ and $d=d'$. We rewrite the sum over coprime integers as a sum over $\tilde{\gamma} =\left(  \gamma , \phi_{\gamma} \right) \in \tilde{\Gamma}_{\infty} \backslash \tilde{\Gamma}$ where $\gamma = \left(\begin{smallmatrix} a & b \\ c & d \end{smallmatrix}\right)$. This also introduces a factor of $1/2$ due to the two possibilities $\left(  \gamma , \phi_{\gamma} \right)$ and $\left(  \gamma , -\phi_{\gamma} \right)$. We then obtain the stated result using
\[
 \Xi_{k-1-j}(\tau, \mu_{K}, -nd, nc) = \phi_{\gamma}(\tau)^{1 - 2k + 2j} \tilde{\rho}_{K}^{-1}(\gamma,\phi_{\gamma})\Xi_{k-1-j}( \gamma \tau, \mu_{K}, -n, 0). \qedhere
\]
\end{proof}

\subsection{Asymptotics}\label{asymptotics}
We consider the asymptotic behaviour of our kernel function at the cusps. We do this by considering the cusp $l= l_{\infty} = \begin{psmallmatrix} 0 & 1/N \\ 0 & 1 \end{psmallmatrix}$ (so $l'= - l_{0} = \begin{psmallmatrix} 0 & 0 \\ 1 & 0 \end{psmallmatrix}$). We will also associate the upper half plane with an open subset of $K \otimes \mathbb{C}$ by mapping $z' \in \mathbb{H}$ to $ \left(\begin{smallmatrix} 1 & 0 \\ 0 & -1\end{smallmatrix} \right) \otimes z'$. We then have the following identities
\begin{enumerate}
\item $(\lambda, \mu_{K}) = (\lambda,x)$,
\item $(\lambda, \mathfrak{w}^{\perp}) = -\frac{1}{2Ny^{2}}(\lambda,y)$,
\item $Q_{z}(l) = \frac{1}{2Ny^{2}}$,
\item $c_{z,k,j} = \frac{i N y^{2}}{\sqrt{2|\Delta|}} \left( \frac{i y \sqrt{|\Delta|}}{2\sqrt{2  \pi}}\right)^{k-1}\left(\frac{(1-k)\sqrt{|\Delta|}}{y\sqrt{ N \pi}} \right)^{j}$.
\end{enumerate}
We will denote $c_{k,j} \coloneqq y^{k-1-j}c_{z,k,j}$ and for any function $f: \mathbb{H} \rightarrow \mathbb{C}[L'/L]$ with components $f_{h}$ we will denote
$f_{W_{m}^{N}} \coloneqq \sum_{h \in L'/L} f_{W_{m}^{N}.h} \mathfrak{e}_{h}.	
$
\begin{proposition}\label{cusp growth of theta}
Let $N$ be square-free. Let $s =W_{m}^{N} \infty \in \mathbb{P}^{1}(\mathbb{Q})$ be a cusp of $\Gamma_{0}(N)$. Then there is a constant $ C>0$ such that as $y \rightarrow \infty$ we have
\[ \left(\Theta_{k}(\tau)\big|_{2-2k}W_{m}^{N}\right)(z)  = y^{k} c_{k,1}\Xi_{k-2, W_{m}^{N}}(\tau,0,0,0) + \mathcal{O}(e^{-Cy^{2}}) \]
unless $k=1$, in which case $\left(\Theta_{1}(\tau)\big|_{2-2k}W_{m}^{N}\right)(z) = \mathcal{O}(e^{-Cy^{2}}).$
\end{proposition}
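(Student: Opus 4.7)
The plan is to start from the Poincar\'e series representation in Theorem~\ref{Poincare Series} (applied at the cusp $l = l_\infty$) and to read off the asymptotic behavior as $y = \Im(z) \to \infty$ term by term. By Proposition~\ref{atkin lehner transformation}, $(\Theta_k(\tau)\vert_{2-2k}W_m^N)(z) = \Theta_{k,W_m^N}(\tau, z)$, which is obtained from $\Theta_k(\tau, z)$ merely by the permutation $h \mapsto W_m^N \cdot h$ of the $\mathbb{C}[L'/L]$-components. Applying Theorem~\ref{Poincare Series} componentwise therefore yields a decomposition
\[
\Theta_{k,W_m^N}(\tau, z) \;=\; \mathcal{P}(\tau, z) \,+\, c_{z,k,1}\,\Xi_{k-2, W_m^N}(\tau, 0, 0, 0),
\]
in which $\mathcal{P}(\tau, z)$ denotes the Poincar\'e sum over $n \geq 1$ and $\tilde\gamma \in \tilde\Gamma_\infty \backslash \tilde\Gamma$, and the additive constant-type term on the right is present only when $k \geq 2$.

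The heart of the argument is to show that $\mathcal{P}(\tau, z) = \mathcal{O}(e^{-Cy^2})$ uniformly for $\tau$ in compact subsets of $\mathbb{H}$. Each summand in $\mathcal{P}$ contains, before slashing, the factor $e\bigl(-n^2/(4|\Delta|\,i\,\Im(\tau)\, Q_z(l))\bigr)$, whose modulus equals $\exp\bigl(-\pi N n^2 y^2/(|\Delta|\,\Im(\tau))\bigr)$ thanks to the identity $Q_z(l_\infty) = 1/(2Ny^2)$ listed at the start of Section~\ref{asymptotics}. After the action of $\vert_{k-3/2,\tilde\rho_K}\tilde\gamma$ with $\gamma = \left(\begin{smallmatrix} a & b \\ c & d\end{smallmatrix}\right)$, the argument $\tau$ is replaced by $\gamma\tau$, so $\Im(\tau)$ becomes $\Im(\tau)/|c\tau+d|^2$; consequently the modulus of the slashed exponential factor equals $\exp\bigl(-\pi N n^2 y^2 |c\tau+d|^2/(|\Delta|\,\Im(\tau))\bigr)$, giving a Gaussian decay in $y^2$ that only strengthens as $|c\tau+d|$ grows.

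Combining this $y^2$-decay with routine polynomial majorizations of the remaining factors---the theta series $\Xi_{k-1-j}$, the Hermite-polynomial insertions, the power $(c\tau+d)^{1-j}$, and the $y$-polynomial coefficients $c_{z,k,j}$---one obtains absolute convergence of the double sum over $n$ and $\tilde\gamma$ with a uniform bound $\mathcal{O}(e^{-Cy^2})$ for some $C>0$ depending on the cusp and a compact neighborhood of $\tau$. For $k \geq 2$ the surviving contribution is then $c_{z,k,1}\,\Xi_{k-2,W_m^N}(\tau,0,0,0)$, which we rewrite as $y^k c_{k,1}\,\Xi_{k-2,W_m^N}(\tau,0,0,0)$ using the convention $c_{k,j} \coloneqq y^{k-1-j}\,c_{z,k,j}$ introduced just before the statement; for $k=1$ no such additive term appears in Theorem~\ref{Poincare Series}, so the entire expansion is $\mathcal{O}(e^{-Cy^2})$.

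The main technical obstacle is this Poincar\'e-sum estimate---securing the $\mathcal{O}(e^{-Cy^2})$ bound with the correct uniformity in $\tau$. The delicate point is that although the slashed exponent contains a $|c\tau+d|^2$-factor that pushes the Gaussian decay in the favorable direction, one must still verify that the polynomial growth coming from the weight transformation together with the Hermite-theta series $\Xi_{k-1-j}$ does not overwhelm this Gaussian decay when summed over all of $\tilde\Gamma_\infty \backslash \tilde\Gamma$ and over $n \geq 1$. The explicit shape of the exponent, together with the standard Poincar\'e-series convergence strategy of \cite[Section~5]{Borcherds} and \cite[Section~2]{B-Habil}, makes this manageable.
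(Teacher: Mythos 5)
Your argument is correct and follows essentially the same route as the paper: the paper reads the asymptotics off Proposition~\ref{Rewritten Lemma} (the unfolded sum over all $(c,d)\in\mathbb{Z}^2$, where every term with $(c,d)\neq(0,0)$ carries the Gaussian factor $\exp\left(-\pi N|c\tau+d|^{2}y^{2}/(|\Delta|v)\right)$), while you work with the folded Poincar\'e-series form of Theorem~\ref{Poincare Series} --- the same decomposition and the same estimate. Your identification of the surviving $c=d=0$ contribution as $c_{z,k,1}\Xi_{k-2}(\tau,0,0,0)=y^{k}c_{k,1}\Xi_{k-2}(\tau,0,0,0)$ for $k\geq 2$ (absent for $k=1$) and the reduction of the other cusps to $l_{\infty}$ via Proposition~\ref{atkin lehner transformation} likewise match the paper's proof.
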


\begin{proof}
Using Proposition \ref{Rewritten Lemma} and the above identities we see that $\theta_{h,k}(\tau,z)$ decays exponentially as $y \rightarrow \infty$ (uniformly in $x$) except the case when $c=0,d=0$. In that case we observe $\theta_{h,k}(\tau,z)$ vanishes unless $k \geq 2$ and $j=1$. In the remaining cases, Theorem \ref{Poincare Series} tells us we have $c_{z,k,1}\xi_{k-2,h}(\tau,0,0,0)$ left to consider. The explicit identities given above also tell us that $\xi_{k-2,h}(\tau,0,0,0)$ does not depend on $y$ and so 
\[ \Theta_{k}(\tau,z) = c_{k,1}y^{k}\Xi_{k-2}(\tau,0,0,0) + \mathcal{O}(e^{-Cy^{2}})
\]
as $y \rightarrow \infty$. The statement about the other cusps follows from Proposition~\ref{atkin lehner transformation}.
\end{proof}

\section{The Fourier Expansion}\label{The Fourier Expanison Section}
In this section we compute the Fourier expansion of $\Phi_{\Delta,r,k}(z,f)$ and conclude that $\Phi_{\Delta,r,k}(z,f)$ is indeed a locally harmonic weak Maass form as in Definition \ref{locally harmonic weak maass form}. We need to do some groundwork first. 

\begin{lemma}\label{additional integral}
Let $f \in H_{3/2-k,\overline{\rho}}$ with expansion \eqref{Simple Expansion}. Let $k \geq 2, \Delta=1$. Then
\[  \int_{\tau \in \mathcal{F}}^{\mathrm{reg}} \left\langle f(\tau), \overline{\Xi_{k-2}(\tau,0,0,0)} \right\rangle \frac{dudv}{v^{2}} =   \sum_{h \in K'/K} \sum_{\lambda \in K+h}\frac{c^{+}(-Q(\lambda),h)}{k(k-1)}\left(\frac{ -2\sqrt{2 \pi} (\lambda,\mathfrak{w}^{\perp})}{\sqrt{Q_{z}(l)}}\right)^{k}.
\]
\end{lemma}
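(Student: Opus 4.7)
The plan is to exploit the key identity from Lemma~\ref{Raise lower xi}: $L_{k+1/2}\Xi_k(\tau,0,0,0) = k(k-1)\Xi_{k-2}(\tau,0,0,0)$. Recalling $L_{k+1/2} = -2iv^2\partial_{\bar\tau}$ and the definition of $\xi_{k+1/2}$, this is equivalent to
\[
\overline{\Xi_{k-2}(\tau,0,0,0)} \;=\; \tfrac{v^{3/2-k}}{k(k-1)}\,\xi_{k+1/2}\!\left(\Xi_k(\tau,0,0,0)\right),
\]
so the integral we need is (up to a factor) the regularised Petersson product $(f,\xi_{k+1/2}(\Xi_k))^{\mathrm{reg}}_{3/2-k,\overline\rho}$, and I would compute it by a Stokes/Bruinier--Funke style pairing.

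Concretely, I would apply Stokes's theorem to the $\tilde\Gamma$-invariant 1-form
\[
\omega \;:=\; -\tfrac{1}{k(k-1)}\sum_{h}f_h(\tau)\,\Xi_{k,h}(\tau,0,0,0)\, d\tau.
\]
(Invariance follows because $\sum_h f_h\Xi_{k,h}$ is the canonical $\overline\rho\otimes\rho$-pairing of weight $(3/2-k)+(k+1/2)=2$, matching the weight $-2$ of $d\tau$.) Differentiating and substituting $\partial_{\bar\tau}f_h = \tfrac{iv^{k-3/2}}{2}\overline{\xi_{3/2-k}(f)_h}$ (harmonicity of $f$) together with $\partial_{\bar\tau}\Xi_{k,h} = \tfrac{ik(k-1)}{2v^2}\Xi_{k-2,h}$ yields
\[
\sum_h f_h \Xi_{k-2,h}\,\tfrac{du\,dv}{v^2} \;=\; d\omega \;-\; \tfrac{1}{k(k-1)}\,\langle \Xi_k,\xi_{3/2-k}(f)\rangle\, v^{k+1/2}\tfrac{du\,dv}{v^2}.
\]

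Integrating over $\mathcal{F}_t$ and letting $t\to\infty$, modular invariance of $\omega$ causes the two vertical sides to cancel by $T$-invariance, and the two halves of the bottom arc to cancel by $S$-invariance (with the orientation reversed under identification). Only the top line $\{v=t\}$ survives. Substituting the Fourier expansions and integrating over $u\in[-1/2,1/2]$ selects the resonance $m+Q(\lambda)=0$. As $t\to\infty$, the $c^-$-contributions die by the exponential decay of $\Gamma(k-\tfrac12,4\pi Q(\lambda)t)$, the $\lambda=0$ term dies since $v^{-k/2}H_k(0)\to 0$ for $k\ge 2$ (matching the vanishing of $0^k$ on the right), and the leading behaviour $v^{-k/2}H_k(A_\lambda\sqrt v)\to (2A_\lambda)^k$ with $A_\lambda=-\sqrt{2\pi}(\lambda,\mathfrak{w}^\perp)/\sqrt{Q_z(l)}$ produces exactly the claimed right-hand side
\[
\tfrac{1}{k(k-1)}\sum_{h\in K'/K}\sum_{\lambda\in K+h}c^+(-Q(\lambda),h)\!\left(\tfrac{-2\sqrt{2\pi}(\lambda,\mathfrak{w}^\perp)}{\sqrt{Q_z(l)}}\right)^{\!k}\!.
\]

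The main obstacle is the remaining term $\tfrac{1}{k(k-1)}(\Xi_k,\xi_{3/2-k}(f))_{k+1/2}$, which must vanish for the lemma as stated. For $\Delta=1$ the function $\Xi_k(\tau,0,0,0)$ is (up to a constant) the $k$-fold Maass raising of the unary theta series of the positive-definite rank-one lattice $K$; such Maass-raised unary thetas lie in the theta/Eisenstein part of the weight $k+1/2$ spectrum, and I would justify the needed orthogonality to the cusp form $\xi_{3/2-k}(f)\in S_{k+1/2,\rho}$ either via this spectral description or directly by Rankin--Selberg unfolding using the explicit Fourier expansion of $\Xi_k$. This orthogonality is the one non-routine step; everything else is careful book-keeping of the Stokes/Fourier computation.
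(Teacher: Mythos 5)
Your proposal is correct and is essentially the paper's argument: both reduce the integral via Stokes/adjointness of $R_{k-3/2}$ and $L_{k+1/2}$ (using Lemma~\ref{Raise lower xi}) to a boundary term at $v=t$, whose limit is extracted from the leading coefficient of $t^{-k/2}H_k(A_\lambda\sqrt{t})\to(2A_\lambda)^k$, plus the pairing of $\Xi_k(\tau,0,0,0)$ against $\xi_{3/2-k}(f)$. The one step you flag as non-routine --- the vanishing of that pairing --- is handled in the paper by a second application of the same adjointness: writing $\Xi_k=-4R_{k-3/2}\Xi_{k-2}$ and using $L_{k+1/2}\xi_{3/2-k}(f)=-v^{3/2-k}\overline{\Delta_{3/2-k}f}=0$ together with the exponential decay of the cusp form $\xi_{3/2-k}(f)$ against the bounded $\Xi_{k-2}$, so neither a spectral decomposition nor Rankin--Selberg unfolding is needed.
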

\begin{proof}
We write $g = \Xi_{k-2}(\tau,0,0,0)$. We first show $(R_{k-3/2}(g), \xi_{3/2-k}(f))^{\mathrm{reg}}_{1/2-k,L} = 0$. Indeed, we have using \cite[Lemma~4.2]{{B-Habil}} (correcting a sign error):
\begin{multline*} (R_{-\kappa}(g),\xi_{\kappa}(f))^{\mathrm{reg}}_{2-\kappa, L}
=  \lim_{t \rightarrow \infty}\int_{\tau \in \mathcal{F}_{t}} \left\langle R_{-\kappa}(g), \xi_{\kappa}(f) \right\rangle v^{-\kappa} du dv \\ =  -\lim_{t \rightarrow \infty} \int_{\tau \in \mathcal{F}_{t}} \left\langle g, L_{2-\kappa}\xi_{\kappa}(f) \right\rangle v^{-\kappa} \frac{du dv}{v^{2}}  + \lim_{t \rightarrow \infty}\int_{-1/2}^{1/2} \left[\left\langle g,\xi_{\kappa}(f) \right\rangle v^{-\kappa} \right]_{v=t} du.
\end{multline*}
We observe the second term disappears as  $g$ is bounded and $\xi_{\kappa}(f)$ is exponentially decaying. The first term also disappears as $f$ is harmonic so
$L_{2-\kappa}\xi_{\kappa}(f) = v^{\kappa}\overline{\xi_{2-\kappa}\xi_{\kappa}(f)} = -v^{\kappa}\overline{\Delta_{\kappa}(f)} =0.$ Using \cite[Lemma~4.2]{{B-Habil}} in the other direction gives
\begin{multline*}\label{lowering raising}
 \lim_{t \rightarrow \infty}\int_{\tau \in \mathcal{F}_{t}} \left\langle R_{-\kappa}(g), \xi_{\kappa}(f) \right\rangle v^{-\kappa} du dv  = \lim_{t \rightarrow \infty}\overline{\int_{\tau \in \mathcal{F}_{t}} \left\langle R_{-\kappa}(v^{\kappa} \overline{f}), R_{-\kappa}(g) \right\rangle v^{-\kappa} du dv} 
\\ = - \lim_{t \rightarrow \infty}\int_{\tau \in \mathcal{F}_{t}} \left\langle L_{2-\kappa}R_{-\kappa}(g),  \overline{f}   \right\rangle  \frac{du dv}{v^2} + \lim_{t \rightarrow \infty}\int_{-1/2}^{1/2} \left[ \left\langle f, \overline{R_{-\kappa}(g)}  \right\rangle \right]_{v=t} du. 
\end{multline*} 
Using Lemma \ref{Raise lower xi} we obtain 
\[
 \int_{\tau \in \mathcal{F}}^{\mathrm{reg}} \left\langle f(\tau), \overline{g(\tau)}  \right\rangle \frac{du dv}{v^2} =  -\frac{4}{k(k-1)}\lim_{t \rightarrow \infty}\int_{-1/2}^{1/2} \left\langle f(u+it), \overline{R_{k-3/2}(g(u+it))}  \right\rangle du.
\]
The integral over $u$ picks out the $0$-th Fourier coefficient and $f^{-}(u+it)$ decays exponentially as $t \rightarrow \infty$ so it remains to consider
\begin{align*} {} = {} & \frac{1}{k(k-1)}\lim_{t \rightarrow \infty} \sum_{h \in K'/K} \sum_{\lambda \in K+h}c(-Q(\lambda),h,t)t^{-k/2}H_{k}\left(\frac{ -\sqrt{2 \pi t} (\lambda,\mathfrak{w}^{\perp})}{\sqrt{Q_{z}(l)}} \right).
\intertext{We have a finite set of terms $c^{+}(-Q(\lambda),h)$. Using $H_{n}(x) = n! \sum_{m=0}^{ \lfloor n/2 \rfloor} \frac{(-1)^{m}(2x)^{n-2m}}{m!(n-2m)!}$ we obtain the result after taking the limit}
 {} = {} & \lim_{t \rightarrow \infty} \sum_{h \in K'/K} \sum_{\lambda \in K+h}\frac{t^{-k/2} c^{+}(-Q(\lambda),h)k!}{k(k-1)}\sum_{n=0}^{ \lfloor k/2 \rfloor} \tfrac{(-1)^{n}}{n!(k-2n)!} \left(\tfrac{ -2\sqrt{2 \pi t} (\lambda,\mathfrak{w}^{\perp})}{\sqrt{Q_{z}(l)}}\right)^{k-2n}. \qedhere
\end{align*} 
\end{proof}

We need a few more integrals.

\begin{lemma}\label{Integral 0}
Let $n \in \mathbb{Z}, n \geq 0$ and let $r \in \mathbb{Z}, r \geq n$. Then
\begin{equation}\label{easy integral} \int_{t=0}^{\infty} t^{r}  
 H_{n}\left(t \right)e^{- t^{2} } dt  = \frac{r!}{2(r-n)!}\Gamma\left(\frac{r-n+1}{2}\right). \end{equation}
\end{lemma}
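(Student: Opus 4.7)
The plan is to use the Rodrigues formula $H_n(t) = (-1)^n e^{t^2}\frac{d^n}{dt^n}(e^{-t^2})$ to rewrite the integrand, and then integrate by parts $n$ times. Specifically, I would first note
\[
\int_0^\infty t^r H_n(t)\, e^{-t^2}\, dt = (-1)^n \int_0^\infty t^r\, \frac{d^n}{dt^n}\!\left(e^{-t^2}\right) dt.
\]
Since each derivative $\frac{d^j}{dt^j}(e^{-t^2}) = (-1)^j H_j(t)\, e^{-t^2}$ decays super-exponentially at $+\infty$, the boundary term at infinity vanishes in every application of integration by parts. At the lower endpoint $t=0$, the factor $t^{r-j}$ appearing after the $j$-th step still vanishes as long as $r-j\geq 1$, which is guaranteed by the hypothesis $r\geq n$ (the case $r=n=0$ is trivial, and for $r=n\geq 1$ one simply uses $t^{r-j}|_{t=0}=0$ for $j<n$).

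After performing $n$ integrations by parts I obtain
\[
(-1)^n \int_0^\infty t^r \frac{d^n}{dt^n}\!\left(e^{-t^2}\right) dt = \frac{r!}{(r-n)!}\int_0^\infty t^{r-n}\, e^{-t^2}\, dt,
\]
where the sign $(-1)^n$ is cancelled by the $n$ sign flips picked up by the derivative moving onto $t^r$, and the factor $r!/(r-n)!$ arises from differentiating $t^r$ down to $t^{r-n}$.

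Finally, the remaining integral is a standard Gamma evaluation: with the substitution $s=t^2$ one has
\[
\int_0^\infty t^{r-n}\, e^{-t^2}\, dt = \frac{1}{2}\int_0^\infty s^{(r-n-1)/2}\, e^{-s}\, ds = \frac{1}{2}\,\Gamma\!\left(\frac{r-n+1}{2}\right),
\]
which combines with the previous factor to yield the claimed identity. There is no serious obstacle here; the only point requiring care is checking that the boundary terms at $t=0$ vanish in each integration by parts, which is precisely what the assumption $r\geq n$ ensures.
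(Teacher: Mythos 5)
Your proof is correct and takes essentially the same route as the paper: the paper's one-line argument uses $\tfrac{d}{dt}\bigl(H_{n}(t)e^{-t^{2}}\bigr)=-H_{n+1}(t)e^{-t^{2}}$ (the Rodrigues formula in one-step form) together with integration by parts, which is exactly your $n$-fold integration by parts followed by the standard substitution $s=t^{2}$ giving $\tfrac12\Gamma\bigl(\tfrac{r-n+1}{2}\bigr)$.
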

\begin{proof}
This follows from $\frac{\partial H_{n}\left(t \right)e^{- t^{2}}}{\partial t}= -H_{n+1}(t)e^{- t^{2}}$ and integration by parts.
\end{proof}

 \begin{lemma}\label{Integral 1}
Let $\kappa \in \mathbb{Z}, \kappa \geq 0$ and let $\alpha, \beta \in \mathbb{R}, \alpha >0$. Then
\[
\int_{v=0}^{\infty}\sum_{j} v^{-\kappa/2}  
 H_{\kappa-j}\left(-\frac{\alpha}{\sqrt{v}}+ \beta\sqrt{v} \right)\left(\frac{\kappa \sqrt{v}}{\alpha} \right)^{j} e^{- \alpha^{2}/v} \frac{dv}{v^{2}} = \frac{e^{-2 \alpha \beta}\Gamma(\kappa+1,-2 \alpha \beta)}{(-\alpha)^{\kappa+2}}.
\]
\end{lemma}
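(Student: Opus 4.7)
\medskip
\noindent\textbf{Proof plan for Lemma \ref{Integral 1}.}
The natural first step is to perform the substitution $t = \alpha^{2}/v$, which turns the oscillating-looking argument into something simpler: one has $\sqrt{v} = \alpha/\sqrt{t}$, $dv/v^{2} = -dt/\alpha^{2}$, and
\[
-\frac{\alpha}{\sqrt{v}} + \beta\sqrt{v} = -\sqrt{t} + \frac{\alpha\beta}{\sqrt{t}}, \qquad v^{-\kappa/2} = \alpha^{-\kappa}t^{\kappa/2},
\]
so the integral becomes
\[
\frac{1}{\alpha^{\kappa+2}}\sum_{j}\kappa^{j}\int_{0}^{\infty} t^{(\kappa-j)/2}\, H_{\kappa-j}\!\left(-\sqrt{t} + \frac{\alpha\beta}{\sqrt{t}}\right) e^{-t}\, dt.
\]
With the $v^{-\kappa/2}$ weight absorbed into the power of $t$, the remaining work is a set of standard Hermite integrals over $(0,\infty)$ with weight $e^{-t}$.

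\medskip
\noindent Next I would expand each Hermite polynomial using the addition formula
\[
H_{n}(x+y) \;=\; \sum_{m=0}^{n}\binom{n}{m}(2y)^{n-m}H_{m}(x),
\]
applied with $x=-\sqrt{t}$, $y=\alpha\beta/\sqrt{t}$, together with $H_{m}(-\sqrt{t})=(-1)^{m}H_{m}(\sqrt{t})$. The power of $t$ in the resulting summand becomes $t^{m/2}$, so every remaining integral has the form $\int_{0}^{\infty} t^{m/2}H_{m}(\sqrt{t})e^{-t}\,dt$. Substituting $u=\sqrt{t}$ and applying Lemma~\ref{Integral 0} with $r=m+1$, $n=m$ gives
\[
\int_{0}^{\infty} t^{m/2}H_{m}(\sqrt{t})e^{-t}\,dt \;=\; 2\int_{0}^{\infty} u^{m+1}H_{m}(u)e^{-u^{2}}\,du \;=\; (m+1)!.
\]
Meanwhile, expanding the incomplete gamma function via $\Gamma(\kappa+1,x)=\kappa!\,e^{-x}\sum_{p=0}^{\kappa}x^{p}/p!$ rewrites the claimed right-hand side as
\[
\frac{e^{-2\alpha\beta}\Gamma(\kappa+1,-2\alpha\beta)}{(-\alpha)^{\kappa+2}} \;=\; \frac{(-1)^{\kappa}\kappa!}{\alpha^{\kappa+2}}\sum_{p=0}^{\kappa}\frac{(-2\alpha\beta)^{p}}{p!}.
\]
Comparing coefficients of the powers of $(2\alpha\beta)$ on both sides reduces the identity to a finite combinatorial relation on sums of $\kappa^{j}(\kappa-j)!(m+1)!\binom{\kappa-j}{m}$.

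\medskip
\noindent The main obstacle is verifying that the double sum in $(j,m)$ arising from the Hermite expansion actually collapses to the single sum on the right. A more conceptual alternative, which I would pursue if the bookkeeping becomes heavy, is induction on $\kappa$: the recursion $\Gamma(\kappa+1,x)=\kappa\Gamma(\kappa,x)+x^{\kappa}e^{-x}$ yields
\[
\frac{e^{-2\alpha\beta}\Gamma(\kappa+1,-2\alpha\beta)}{(-\alpha)^{\kappa+2}} \;=\; -\frac{\kappa}{\alpha}\cdot\frac{e^{-2\alpha\beta}\Gamma(\kappa,-2\alpha\beta)}{(-\alpha)^{\kappa+1}} + \frac{(2\beta)^{\kappa}}{\alpha^{2}},
\]
and the same recursion can be established for the integral by integrating by parts in $v$ and using the Hermite identities $H'_{n}=2nH_{n-1}$ and $H_{n}(x)=2xH_{n-1}(x)-2(n-1)H_{n-2}(x)$; the derivative $\tfrac{d}{dv}(-\alpha/\sqrt{v}+\beta\sqrt{v}) = (\alpha/v+\beta)/(2\sqrt v)$ is what produces the mixing between consecutive values of $\kappa$, and the boundary term at $v=\infty$ produces the $(2\beta)^{\kappa}/\alpha^{2}$ contribution. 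The base case $\kappa=0$ follows at once from $\int_{0}^{\infty} e^{-\alpha^{2}/v}\,dv/v^{2}=1/\alpha^{2}$.
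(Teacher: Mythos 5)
Your plan is correct and follows essentially the same route as the paper's proof: substitute $t=\alpha^2/v$ (the paper uses $t^2=\alpha^2/v$, which is the same after your $u=\sqrt{t}$ step), expand via the Hermite addition formula, evaluate each term with Lemma~\ref{Integral 0} at $r=n+1$, and match against the finite-sum expansion of $\Gamma(\kappa+1,-2\alpha\beta)$. The concluding combinatorial collapse you flag as the remaining obstacle does go through (with $j$ ranging over $\{0,1\}$, the coefficient of each power of $2\alpha\beta$ telescopes to $(-1)^{\kappa-p}\kappa!/p!$), and the paper treats it with the same brevity.
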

\begin{proof} Substituting $v = \alpha^{2}/t^{2}$ and $H_{n}(x+y) = \sum_{m=0}^{n} \binom{n}{m}H_{m}(x)(2 y )^{n-m}$ we obtain
\[ \frac{2(2 \beta)^{\kappa}}{\alpha^{2}} \sum_{j}\sum_{n=0}^{\kappa-j} \binom{\kappa-j}{n} \left(\frac{\kappa}{2 \alpha \beta} \right)^{j} (-2\alpha\beta)^{-n} \int_{t=0}^{\infty} t^{n+1}  
 H_{n}\left(t \right)e^{- t^{2} } dt.  
\]
Applying Lemma \ref{Integral 0} for $r=n+1$ this becomes
\[ 
\frac{2(2 \beta)^{\kappa}}{2 \alpha^{2}} \sum_{j}\sum_{n=0}^{\kappa-j} \binom{\kappa-j}{n} \left(\frac{\kappa}{2 \alpha \beta} \right)^{j} (-2\alpha\beta)^{-n} (n+1)!, 
\]
which we simplify and using the identity from \cite[Section~11.1.9]{Handbook} to get
\begin{align} = &  (-\alpha)^{-\kappa-2} \kappa!\sum_{n=0}^{\kappa} \frac{(-2\alpha\beta)^{n}}{n!} = \frac{e^{-2 \alpha \beta}\Gamma(\kappa+1,-2 \alpha \beta)}{(-\alpha)^{\kappa+2}}. \qedhere
\end{align} \end{proof}

\begin{lemma}\label{Integral 2}
Let $\kappa \in \mathbb{Z}, \kappa \geq 0$ and let $\alpha, \beta \in \mathbb{R}, \alpha >0, \beta \neq 0$. Then
\begin{align*}
& \int_{v=0}^{\infty}\sum_{j} v^{-\kappa/2}  
 H_{\kappa-j}\left(-\frac{\alpha}{\sqrt{v}}+ \beta\sqrt{v} \right)\left(\frac{\kappa \sqrt{v}}{\alpha} \right)^{j} e^{- \alpha^{2}/v}\Gamma\left(\kappa+1/2, \beta^{2} v \right) \frac{dv}{v^{2}}
\\  {} = {} & \begin{dcases}
	 \left(-1 \right)^{\kappa} \frac{(2\kappa)!\sqrt{\pi}}{4^{\kappa}\alpha^{\kappa+2}}e^{-2 \alpha \beta} & \textrm{if }  \beta >0,
	\\ \left(-1 \right)^{\kappa} \frac{\sqrt{\pi}}{4^{\kappa}\alpha^{\kappa+2}}e^{-2 \alpha \beta}\Gamma(2\kappa+1, - 4 \alpha \beta) & \textrm{if }  \beta <0.
\end{dcases}
\end{align*}
\end{lemma}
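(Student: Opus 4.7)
The plan is to follow the strategy of Lemma \ref{Integral 1}, adapting it to the extra factor $\Gamma(\kappa+1/2,\beta^2 v)$. First, I would substitute $v=\alpha^2/t^2$, so that $\sqrt{v}=\alpha/t$, $e^{-\alpha^2/v}=e^{-t^2}$ and $dv/v^2=(2t/\alpha^2)\,dt$; setting $A:=\alpha|\beta|>0$, the integral becomes
\[
\frac{2}{\alpha^{\kappa+2}}\int_0^\infty\sum_j t^{\kappa-j+1}\kappa^j\,H_{\kappa-j}(-t+\alpha\beta/t)\,e^{-t^2}\,\Gamma(\kappa+1/2,A^2/t^2)\,dt.
\]
I would then apply the Hermite addition formula $H_n(x+y)=\sum_m\binom{n}{m}H_m(x)(2y)^{n-m}$ with $x=-t$, $y=\alpha\beta/t$, together with $H_m(-t)=(-1)^m H_m(t)$. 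This reduces the computation to a finite linear combination (indexed by $j$ and $m$) of integrals
\[
\mathcal{I}_m:=\int_0^\infty t^{m+1}H_m(t)e^{-t^2}\,\Gamma(\kappa+1/2,A^2/t^2)\,dt,
\]
with the sign of $\beta$ entering only through the factor $(2\alpha\beta)^{\kappa-j-m}=\sgn(\beta)^{\kappa-j-m}(2A)^{\kappa-j-m}$.

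Next, I would evaluate $\mathcal{I}_m$. Writing $\Gamma(\kappa+1/2,A^2/t^2)=2\int_{A/t}^\infty s^{2\kappa}e^{-s^2}\,ds$ and swapping the order of integration gives
\[
\mathcal{I}_m=2\int_0^\infty s^{2\kappa}e^{-s^2}\int_{A/s}^\infty t^{m+1}H_m(t)e^{-t^2}\,dt\,ds.
\]
Using the Rodrigues-type identity $H_n(t)e^{-t^2}=-\partial_t[H_{n-1}(t)e^{-t^2}]$ and integration by parts, one shows inductively that $\int_{A/s}^\infty t^{m+1}H_m(t)e^{-t^2}\,dt=Q_m(A/s)e^{-A^2/s^2}$ for an explicit polynomial $Q_m$ of degree $2m$. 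The remaining $s$-integrals $\int_0^\infty s^{2\kappa-2\ell}e^{-s^2-A^2/s^2}\,ds$ are classical Bessel integrals, which have elementary closed form because $K_{n+1/2}$ is elementary, yielding $\mathcal{I}_m$ explicitly as $e^{-2A}$ times a polynomial in $A$.

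Finally, I would substitute the closed form for $\mathcal{I}_m$ back into the $(j,m)$-sum. For $\beta>0$ every sign $\sgn(\beta)^{\kappa-j-m}$ equals $+1$, and---exactly as in the last step of the proof of Lemma \ref{Integral 1}, where the identity $\Gamma(n+1,x)=n!\,e^{-x}\sum_{k=0}^{n}x^k/k!$ was invoked---the double sum collapses to $(-1)^\kappa(2\kappa)!\sqrt{\pi}/(4^\kappa\alpha^{\kappa+2})\cdot e^{-2\alpha\beta}$. For $\beta<0$ the alternating signs $(-1)^{\kappa-j-m}$ combine with the Bessel-polynomial coefficients to rebuild the truncated exponential $\sum_{k=0}^{2\kappa}(-4\alpha\beta)^{k}/k!$, which equals $\Gamma(2\kappa+1,-4\alpha\beta)/\bigl((2\kappa)!\,e^{-4\alpha\beta}\bigr)$ by the same identity, thereby producing the second case of the lemma. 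The main obstacle will be the combinatorial bookkeeping in this last step---tracking the binomial coefficients, the two families of signs, and the Bessel contributions simultaneously---so I would first verify the collapse pattern on the small cases $\kappa=0,1,2$ to identify the correct grouping before attempting the general identity.
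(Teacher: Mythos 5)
Your outline is sound and every intermediate identity checks out (I verified the $\kappa=0$ and $\kappa=1$ cases reproduce both branches of the lemma), but it takes a genuinely different route from the paper. The paper writes $\alpha=ay$, shows the integral $I(y)$ satisfies the second-order ODE $yI''(y)-2\kappa I'(y)-(4\kappa a\beta+4a^2\beta^2 y)I(y)=0$, observes that $e^{-2a\beta y}$ and $e^{-2a\beta y}\Gamma(2\kappa+1,-4a\beta y)$ form a solution basis, and then pins down the two coefficients by asymptotic analysis as $y\to\infty$ using Lemma \ref{Integral 1}. That approach explains structurally why exactly those two special functions appear and reduces the work to matching two asymptotic constants. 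Your approach is more elementary and direct: after the substitution $v=\alpha^2/t^2$ and the Hermite addition formula, you unfold $\Gamma(\kappa+1/2,A^2/t^2)=2\int_{A/t}^\infty s^{2\kappa}e^{-s^2}\,ds$, swap the order of integration (justified, since the integrand is absolutely integrable on the region), and reduce everything to the elementary integrals $\int_0^\infty s^{2n}e^{-s^2-A^2/s^2}\,ds$, which are indeed $e^{-2A}$ times explicit polynomials via $K_{n+1/2}$. What this buys is a self-contained closed-form computation with no ODE theory; what it costs is that the final resummation over $(j,m,\ell)$ is a nontrivial finite identity that you have only described, not proved — and checking $\kappa=0,1,2$ identifies the pattern but does not establish it for all $\kappa$. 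You would still need a general argument there (e.g.\ induction on $\kappa$ via the recursions for $H_n$ and $\Gamma(n,x)$, or a generating-function manipulation); to be fair, the paper's own proof is equally terse about its ``fairly tedious'' final matching step, so the two write-ups are at a comparable level of completeness.
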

\begin{proof} 
In the integral we write $\alpha=a y$ and write for the Integral $I(y)$. Then $I$ satisfies the second order differential equation
\[
yI''(y) -2\kappa I'(y) - (4\kappa a\beta+4a^2\beta^2y)I(y) = 0.
\]
We easily check that $e^{-2a\beta y}$ and $e^{-2a\beta y}\Gamma(2\kappa+1,-4a\beta y)$ are linear independent solutions. The result can then be obtained by using Lemma \ref{Integral 1} as part of some (fairly tedious) calculations determining the asymptotics of the integral as $y \to \infty$. 
\end{proof}

To state the Fourier expansion we need some additional notation. 

For $A \subset \mathbb{R}$ we define the indicator function $I_{A}(x)$ to equal $1/2$ if $x \in A$ and to equal $0$ otherwise. We will denote the fractional part as $\langle x\rangle \coloneqq x - \lfloor x \rfloor$. Recall that the Bernoulli polynomials are defined by the generating function $\frac{t e^{xt}}{e^{t}-1} = \sum_{n=0}^{\infty}B_{n}(x) \frac{t^{n}}{n!}$ while the periodic Bernoulli polynomials are  given by $ \mathbb{B}_{n}(x) \coloneqq -n!\sum_{m \neq 0}\frac{e(mx)}{(2 \pi i m)^n}$, 
where we set $\mathbb{B}_{0}(x)=1$. We let  $L\left(s, \left(\frac{\Delta}{\cdot}\right)\right) = \sum_{n \geq 1} \left(\frac{\Delta}{n}\right)\frac{1}{n^{s}}$ be the Dirichlet L-function. For $\kappa \in \mathbb{Z}, s \in \mathbb{C}, |s| <1$ we denote the polylogarithm as $\textrm{Li}_{\kappa}(s)$ and introduce a shifted incomplete polylogarithm where for $b \in \mathbb{R}_+$ we set 
\[
 \mathrm{Li}_{\kappa,r}(b,s) \coloneqq \sum_{n=1}^{\infty} \frac{s^{n}}{n^{\kappa+r}}\frac{\Gamma\left(\kappa, nb \right)}{\Gamma(\kappa)},
 \]
 which is a finite sum of polylogarithms. Finally we will use the following constants 
\begin{equation*}
 C_{1} \coloneqq \tfrac{\epsilon_{\Delta}|\Delta|\sqrt{2}}{i \pi } \left(\tfrac{|\Delta|}{i \pi 2\sqrt{2N}} \right)^{k-1}, {} C_{2} \coloneqq \tfrac{2\sqrt{2}\epsilon_{\Delta} \sqrt{\Delta}}{k}\left( \tfrac{\Delta}{ \sqrt{2N}}\right)^{k-1}, {} C_{3} \coloneqq \tfrac{ \sqrt{2} \epsilon_{\Delta}\sqrt{\Delta} (2k-2)!}{i \sqrt{\pi}}\left( \tfrac{\Delta}{8\pi i\sqrt{2N }}\right)^{k-1}. 
\end{equation*}

We can now state the main result of this chapter. The proof is similar in nature to those found in \cite[Sections~7 and 14]{Borcherds}, \cite[Chapters~2 and 3]{B-Habil} and \cite[Theorem~5.3]{BO-Annals}.

\begin{theorem}\label{The Fourier Expansion}
Let $f \in H_{3/2-k,\overline{\rho}}$ with expansion \eqref{Simple Expansion}. If $n_{0} <0$ then $\Phi_{\Delta,r,k}(z,f) \equiv 0$. If $n_{0} \geq 0$, let $z=x+iy \in \mathbb{H}$ where $y > \sqrt{-|\Delta|n_{0}/N}$. Then at the cusp $l_{\infty}$
\begin{subequations}\label{big expansion} 
\begin{align}
&\Phi_{\Delta,r,k}(z,f) = \notag \\
&    \qquad   C_{1}  c^{+}(0,0)L\left(k, \left(\tfrac{\Delta}{\cdot}\right)\right)\label{constant term, expansion}
\\  & \quad -  C_{2}  \sum_{m \geq 1}\sum_{b(\Delta)} \left( \frac{\Delta}{b}\right)c^{+}\left(-\frac{|\Delta|m^{2}}{4N},\frac{r m}{2N} \right) \label{bernoulli b}
 \left[ B_{k}\left( \left\langle mx + b/\Delta \right\rangle +imy \right) +  \frac{kI_{\mathbb{Z}}(mx+b/\Delta)}{(imy)^{1-k}}\right] \\
& \quad +  C_{3} \sum_{m \geq 1} \sum_{b(\Delta)}\left( \frac{\Delta}{b}\right) c^{-}\left(-\frac{|\Delta|m^{2}}{4N},\frac{rm}{2N}  \right) \label{c- term,a} \\
&\qquad \qquad \quad \times  \left[ \mathrm{Li}_{k} \left(e( m z+b/\Delta)\right)   + (-1)^{k}\mathrm{sgn}(\Delta) \mathrm{Li}_{2k-1,1-k}\left(4 \pi m y, e(-(m z-b/ \Delta))\right) \right]. \label{c- term,b}
\end{align}
\end{subequations}
In the case $k=1$ square bracketed part of \eqref{bernoulli b} is replaced with $\mathbb{B}_{1}(mx+b/\Delta)$. Note that the constant term \eqref{constant term, expansion} vanishes if $k$ is odd and $\Delta >0$ or if $k$ is even and $\Delta <0$.
\end{theorem}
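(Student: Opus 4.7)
The strategy is a Rankin--Selberg style unfolding. Substitute the Poincar\'e series representation of $\Theta_{\Delta,r,k}(\tau,z)$ from Theorem~\ref{Poincare Series} into $\Phi_{\Delta,r,k}(z,f)$. This splits the lift into an unfolded main piece coming from the sum over $\tilde\Gamma_\infty\backslash\tilde\Gamma$ (taking $l=l_\infty$ so we compute the expansion at $\infty$), plus (when $k\ge 2$) the extra term
\[
c_{z,k,1}\int_{\mathcal F}^{\mathrm{reg}}\bigl\langle f(\tau),\overline{\Xi_{k-2}(\tau,0,0,0)}\bigr\rangle\frac{du\,dv}{v^2},
\]
which is evaluated directly via Lemma~\ref{additional integral}. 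Using the identities of Section~\ref{asymptotics}, this extra term is responsible for the constant term \eqref{constant term, expansion} and contributes the polynomial-in-$y$ tail of the Bernoulli piece \eqref{bernoulli b}.

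For the main piece, the Poincar\'e series unfolds $\mathcal F$ to the strip $(-\tfrac12,\tfrac12]\times(0,\infty)$. Expand $\Xi_{k-1-j}(\tau,\mu_K,-n,0)$ by Lemma~\ref{Rewritten Theta} and pair with the Fourier expansion \eqref{Simple Expansion} of $f$. The $u$-integration forces equality of exponents and singles out the coefficients $c\bigl(-Q(\lambda)/|\Delta|,h,v\bigr)$. For $l=l_\infty$ the sublattice $K$ is one-dimensional, so $\lambda\in K+rh$ is parametrised by an integer $m$ together with a coset representative $b\!\pmod{\Delta}$; the factor $(\Delta/n)$ from Lemma~\ref{Rewritten Theta} becomes the Dirichlet character controlling the sum over $b$.

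After the $u$-integration a single $v$-integral remains of precisely the shape treated by Lemmas~\ref{Integral 1} and \ref{Integral 2}, with $\alpha\propto n/(y\sqrt{N})$ and $\beta\propto my$ coming from the exponential $e\bigl(-n^2/(4|\Delta|iv Q_z(l))\bigr)$ and from the $\Xi$-factor respectively. Lemma~\ref{Integral 1} handles the holomorphic coefficients $c^+$ and, after summing $n\ge 1$ against $e(nm\alpha)/n^k$-type series, yields the Bernoulli polynomial contribution \eqref{bernoulli b} via the classical identity
\[
\sum_{n\ge 1}\frac{e(n\alpha)}{n^k}+(-1)^k\sum_{n\ge 1}\frac{e(-n\alpha)}{n^k}=-\frac{(2\pi i)^k}{k!}B_k(\langle\alpha\rangle),
\]
used with the complex argument $\alpha=mx+b/\Delta+imy$; the degenerate case $mx+b/\Delta\in\mathbb Z$ produces the residual term $kI_{\mathbb Z}/(imy)^{1-k}$, and for $k=1$ reduces to $\mathbb B_1$. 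Lemma~\ref{Integral 2} handles the shadow coefficients $c^-$ and, depending on the sign of $\beta$, produces the ordinary polylogarithm $\mathrm{Li}_k$ and the shifted incomplete polylogarithm $\mathrm{Li}_{2k-1,1-k}$ in \eqref{c- term,a}--\eqref{c- term,b}. The $m=0$ contribution, combined with the sum over $b\pmod\Delta$ and $n\ge 1$, assembles into $L(k,(\Delta/\cdot))$ and gives \eqref{constant term, expansion}; the parity of $(\Delta/\cdot)$ forces its vanishing in the stated cases.

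The main obstacle is bookkeeping rather than analysis: tracking the constants $c_{z,k,j}$ through Lemma~\ref{Big Lemma}, Proposition~\ref{Rewritten Lemma}, and Theorem~\ref{Poincare Series}, correctly isolating the $\lambda=0$ term from the $\lambda\ne 0$ terms, and repackaging Dirichlet series over $n$ into Bernoulli polynomials and (incomplete) polylogarithms. The hypothesis $y>\sqrt{-|\Delta|n_0/N}$ ensures that $z$ lies above every vertical singular geodesic $D_\lambda$ coming from $\lambda\in L_{-d\Delta,rh}$ with $c^+(m,h)\neq 0$, so all resulting $v$-integrals converge absolutely and the interchange of summation and integration used in the unfolding is justified; equivalently, $z$ lies in the unique Weyl chamber containing a neighbourhood of $\infty$. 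If $n_0<0$, the sum over $\lambda$ is empty and the extra term of Lemma~\ref{additional integral} also vanishes by the parity of its Hermite expansion, giving $\Phi_{\Delta,r,k}(z,f)\equiv 0$.
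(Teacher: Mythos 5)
Your plan follows the paper's route exactly: unfold via Theorem~\ref{Poincare Series}, treat the leftover $\Xi_{k-2}(\tau,0,0,0)$ term with Lemma~\ref{additional integral}, integrate over $u$, and evaluate the remaining $v$-integrals with Lemmas~\ref{Integral 1} and~\ref{Integral 2}. Two steps, however, are not right as written. First, you attribute the constant term \eqref{constant term, expansion} to the extra term from Lemma~\ref{additional integral}; that term equals \eqref{additional piece untwisted}, which is proportional to $y^{k}$ and is absorbed into the Bernoulli sum as the $s=0$ (i.e.\ $\mathbb{B}_{0}=1$) piece of \eqref{sum of bernoullis} --- it is not a constant, and it only exists for $\Delta=1$, $k\ge 2$. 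The constant term actually arises from the $\lambda=0$ term of the $c^{+}$ part of the unfolded integral \eqref{Before resuming}, where $\sum_{n\ge1}(\tfrac{\Delta}{n})n^{-k}$ assembles into $L(k,(\tfrac{\Delta}{\cdot}))$. You do state this correctly later in the same paragraph, so the proposal is internally inconsistent on where \eqref{constant term, expansion} comes from; only the second account survives the bookkeeping.

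Second, the ``classical identity'' you invoke cannot be used at the complex argument $\alpha=mx+b/\Delta+imy$: for $\Im\alpha>0$ the series $\sum_{n\ge1}e(-n\alpha)n^{-k}$ diverges, and $\langle\alpha\rangle$ is not defined. The actual mechanism is that Lemma~\ref{Integral 1} produces the factor $\Gamma\left(k,\mp 2\pi n m y\right)$, whose finite expansion $(k-1)!\,e^{-x}\sum_{j=0}^{k-1}x^{j}/j!$ cancels the exponential growth in $y$ and leaves, for each power $(imy)^{k-s}$, a convergent sum $\sum_{n\ne 0}e(n(mx+b/\Delta))n^{-s}$ at the \emph{real} argument; these are the periodic Bernoulli polynomials $\mathbb{B}_{s}(mx+b/\Delta)$, reassembled into $B_{k}(\langle mx+b/\Delta\rangle+imy)$ by the binomial addition formula. (The degenerate term $kI_{\mathbb{Z}}(mx+b/\Delta)(imy)^{k-1}$ comes from the $s=1$ jump of $\mathbb{B}_{1}$, as you anticipate.) Finally, your dispatch of the case $n_{0}<0$ via ``the sum over $\lambda$ is empty'' only kills the $c^{+}$ contributions; one also needs the $c^{-}$ coefficients to vanish, which follows because a vanishing principal part forces $\xi_{3/2-k}(f)=0$.
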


\begin{proof} Inserting \eqref{Poincare Series} into Definition \ref{The Lift} and using the usual unfolding trick, we have
\begin{align}
&\Phi_{\Delta, r, k}(z,f)  \notag\\
&=   c_{z,k,1} \int_{\tau \in \mathcal{F}}^{\mathrm{reg}} \left\langle   f(\tau) , \overline{\Xi_{k-2}(\tau, 0, 0, 0)}\right\rangle \frac{du dv}{v^2} \label{Term11}
\\ 
 & \quad + 2 \int_{v=0}^{\infty} \int_{u=-\frac12}^{\frac12} \sum_{n \geq 1} \sum_{j}c_{z,k,j}
   (-n)^{1-j} e\left(- \tfrac{n^2}{4|\Delta|i v Q_{z}(l)} \right)  \left\langle  f(\tau) , \overline{\Xi_{k-1-j}(\tau, \mu_{K}, -n, 0)} \right\rangle \frac{du dv}{v^2}. \label{Term22}
\end{align}
We consider \eqref{Term11} first. This disappears unless $k \geq 2$ and $\Delta=1$ (see Theorem \ref{Poincare Series} and Lemma \ref{Rewritten Theta}). If $k \geq 2, \Delta=1$ we use Lemma \ref{additional integral}. This term can then be simplified using the identities in Section~\ref{asymptotics}, identifying $K'$ with $\frac{m}{2N}$ for $m \in \mathbb{Z}$, and observing that $c^{+}\left(-Q(\lambda),\lambda \right) = (-1)^{k}c^{+}\left(-Q(\lambda),-\lambda \right)$. So the sum over $\lambda < 0$ is the same as the sum over $\lambda > 0$. Putting all this together we obtain 
\begin{equation}\label{additional piece untwisted}
 \frac{ 2\sqrt{2} y}{i k}\left( \frac{y i}{\sqrt{2N}} \right)^{k-1} \sum_{m \geq 1}m^{k}c^{+}\left(-\frac{m^{2}}{4N},\frac{m}{2N}\right).
\end{equation} 
For the second term \eqref{Term22}, we insert the expansions given in \eqref{Simple Expansion} and Lemma \ref{Rewritten Theta} and carry out the integration over $u$.  We notice that taking a sum over $\lambda \in K, h \in K'/K$ such that $Q(\lambda) \equiv \Delta Q(h) (\Delta)$ and $\lambda \equiv rh ( K )$ is equivalent to taking a sum over $\lambda' \in K'$ with $\lambda = \Delta \lambda'$ and $r \lambda' \equiv h ( K)$. We obtain
\begin{multline}\label{absolute convergence integral}
 2 \epsilon_{\Delta}|\Delta|^{1/2} \int_{v=0}^{\infty}  \sum_{j}\sum_{\lambda \in K'}\sum_{n \geq 1} \left( \frac{\Delta}{n}\right) c_{z,k,j}(-n)^{1-j}  v^{-(k-1-j)/2} c\left(-|\Delta|Q(\lambda),r \lambda,v \right) \\  \times 
 H_{k-1-j}\left( \frac{\sqrt{\pi}(-n-2 \Delta v( \lambda,\mathfrak{w}^{\perp}))}{\sqrt{2 |\Delta| v Q_{z}(l)}}\right) e\left( \mathrm{sgn}(\Delta)n(\lambda, \mu_{K})\right) e\left(- \frac{n^2}{4|\Delta|i v Q_{z}(l)} \right) \frac{dv}{v^{2}}.
\end{multline}
We now split the Fourier coefficients $c(m,h,v) = c^{+}(m,h) + c^{-}(m,h)\Gamma(k-1/2,-4\pi m v)$. 

{\textbf{The $c^{+}$ Terms:}} Since $K$ is positive definite $-|\Delta|Q(\lambda)$ is negative, and so we will have only finitely many nonzero $c^{+}(-|\Delta|Q(\lambda),r \lambda)$. We set
\begin{equation}\label{alpha beta} 
\alpha: = \frac{n \sqrt{\pi}}{\sqrt{2|\Delta|Q_{z}(l)}} > 0 \quad \beta\coloneqq -\frac{\sqrt{2 \pi }\Delta(\lambda,\mathfrak{w}^{\perp})}{\sqrt{|\Delta| Q_{z}(l)}}
\end{equation}
In this case the integral and the sum over $j$ in \eqref{absolute convergence integral} is exactly as in Lemma \ref{Integral 1} (for $\kappa=k-1$), and we are left with
\begin{multline}\label{Before resuming}
 -2 \epsilon_{\Delta}|\Delta|^{1/2} \left(-\frac{\sqrt{2|\Delta|Q_{z}(l)}}{ \sqrt{\pi}}\right)^{k+1} c_{z,k,0} \sum_{\lambda \in K'} \sum_{n \geq 1} \left( \frac{\Delta}{n}\right) \frac{ c^{+}\left(-|\Delta|Q(\lambda),r \lambda \right)}{n^{k}}  \\  \times 
 e\left( \mathrm{sgn}(\Delta)n\left( (\lambda, \mu_{K})-\frac{i  (\lambda, \mathfrak{w}^{\perp})}{Q_{z}(l)}\right)\right) \Gamma\left(k,\frac{\mathrm{sgn}(\Delta) 2 \pi  n(\lambda, \mathfrak{w}^{\perp})}{Q_{z}(l)}\right).
\end{multline}
We then observe that if we change the sign of $\lambda$ and $n$ and  the terms in the sum remain unchanged. This means we can replace the sums over $n \geq 1$ and $\lambda \in K'$ with sums over $n \in \mathbb{Z}, n \neq 0$ and $\lambda \in K', \lambda > 0$ respectively. The case when $\lambda =0$ gives the constant term \ref{constant term, expansion}. 
We observe \ref{constant term, expansion} disappears if $\frac{1}{2}\left( 1-2k-\mathrm{sgn}(\Delta)\right)$ is odd as then $c^{+}(0,0)=0$. Returning to \eqref{Before resuming} we use \cite[Equation~4.7]{BO-Annals}, the identifications in Section \ref{asymptotics}, and identifying $K' = \frac{1}{2m}\mathbb{Z}$ to obtain 
\begin{multline*}
 \frac{ \sqrt{2 |\Delta|}}{i \pi}\left( \frac{-i|\Delta|}{2 \pi \sqrt{2N}}\right)^{k-1}  \sum_{m \geq 1}\sum_{n \in \mathbb{Z} \backslash \left\lbrace  0 \right\rbrace }\sum_{b(\Delta)} \left( \frac{\Delta}{b}\right) c^{+}\left(-\frac{|\Delta|m^{2}}{4N},\frac{r m}{2N} \right) \\  \times 
n^{-k} e\left( \mathrm{sgn}(\Delta)n\left(mz +\frac{b}{\Delta} \right)\right) \Gamma\left(k,-\mathrm{sgn}(\Delta) 2 \pi n m y\right).
\end{multline*}
Using the decomposition of the incomplete gamma function given in \cite[Section~11.1.9]{Handbook} and properties of the 
Bernoulli polynomials we obtain
\[
 C_{2}  \sum_{m \geq 1}\sum_{b(\Delta)}\sum_{s=1}^{k}\binom{k}{s}\left( \frac{\Delta}{b}\right) c^{+}\left(-\frac{|\Delta|m^{2}}{4N},\frac{r m}{2N} \right) 
 \mathbb{B}_{s}\left( mx +\frac{b}{\Delta} \right)\left(i m y \right)^{k-s}.
\]
This gives the stated result when $k=1$. In the case $k \geq 2$ we know $\sum_{b (\Delta)}\left(\frac{\Delta}{b}\right)$ vanishes if $\Delta \neq 1$ and $\mathbb{B}_{0}(x)=1$ so we can combine \eqref{additional piece untwisted} into this sum (we now sum over $s \geq 0$) to get
\begin{equation}\label{sum of bernoullis}
C_{2} \sum_{m \geq 1}\sum_{b(\Delta)}\sum_{s=0}^{k}\binom{k}{s}\left( \frac{\Delta}{b}\right) c^{+}\left(-\frac{|\Delta|m^{2}}{4N},\frac{r m}{2N} \right) 
 \mathbb{B}_{s}\left( mx +\frac{b}{\Delta} \right)\left(i m y \right)^{k-s},
\end{equation}
from which the form \ref{bernoulli b} stated in the theorem follows using standard properties of the Bernoulli polynomials.

{\textbf{The $c^{-}$ Terms:}} We now have infinitely many nonzero $c^{-}(-|\Delta|Q(\lambda), r \lambda)$. With $\alpha$, $\beta$ as in \eqref{alpha beta} we set $\tilde{\alpha} = \alpha/y>0$ and $\tilde{\beta}=\beta/\mathrm{sgn}(\Delta)m > 0$ for $m \in \mathbb{Z}$. We then obtain
\begin{multline}\label{c- differential}
 -2 \epsilon_{\Delta}|\Delta|^{1/2}c_{k,0}  \sum_{j}\sum_{m \in \mathbb{Z}\backslash \left\lbrace 0 \right\rbrace }\sum_{n \geq 1} \left( \frac{\Delta}{n}\right) c^{-}\left(-\frac{|\Delta|m^{2}}{4N},\frac{rm}{2N}  \right) e^{\mathrm{sgn}(\Delta)2\tilde{\alpha}m\tilde{\beta} i x} n   y^{k+1} \\ \times  \int_{v=0}^{\infty}  \left(\tfrac{(k-1) \sqrt{v}}{\tilde{\alpha} y} \right)^{j}  v^{-\frac{(k-1)}{2}} \Gamma\left(k-1/2,(m\tilde{\beta})^{2} v \right)
 H_{k-1-j}\left( -\frac{\tilde{\alpha} y}{\sqrt{v}}+ \mathrm{sgn}(\Delta)m\tilde{\beta}\sqrt{v}\right) e^{- \frac{(ay)^{2}}{v}} \frac{dv}{v^{2}}.
\end{multline}
We had this integral in $v$ (and sum over $j$) in Lemma \ref{Integral 2}. Carefully considering the cases when $m$ and $\mathrm{sgn}(\Delta)$ are positive and negative we can now switch to a sum over $m \geq 1$ (also remembering
$c^{-}(m,h) = (-1)^{k}\mathrm{sgn}(\Delta)c^{-}(m,-h)$). We obtain
\begin{multline*}
  \frac{ \sqrt{2} \epsilon_{\Delta}\Delta }{i \sqrt{\pi}}\left( \frac{ \Delta}{8\pi i \sqrt{2N }}\right)^{k-1}  \sum_{m \geq 1}\sum_{n \geq 1} \left( \frac{\Delta}{n}\right) c^{-}\left(-\frac{|\Delta|m^{2}}{4N},\frac{rm}{2N}  \right) \\  \times  n^{-k}\left[\mathrm{sgn}(\Delta)  (2k-2)! e(n m z)   + (-1)^{k}e(-n m z) \Gamma(2k-1,4 \pi n m y)\right] .
\end{multline*}
Since $\sum_{b(\Delta)}\left(\frac{\Delta}{b}\right)e\left( \frac{nb}{\Delta}\right) = \mathrm{sgn}(\Delta)\left( \frac{\Delta}{n}\right)\sqrt{\Delta}$ we finally arrive at
\begin{multline}\label{easy form c-}
  \frac{ \sqrt{2} \epsilon_{\Delta}\sqrt{\Delta} }{i \sqrt{\pi}}\left( \frac{ \Delta}{8\pi i \sqrt{2N }}\right)^{k-1}  \sum_{m \geq 1}\sum_{n \geq 1} \sum_{b(\Delta)}\left( \frac{\Delta}{b}\right) c^{-}\left(-\frac{|\Delta|m^{2}}{4N},\frac{rm}{2N}  \right)n^{-k} \\  \times  \left[ (2k-2)! e(n (m z+b/\Delta))   + (-1)^{k}\mathrm{sgn}(\Delta) e(-n (m z-b/\Delta)) \Gamma(2k-1,4 \pi n m y)\right] .
\end{multline}
We can reformulate this more compactly (and remove an infinite sum) by writing this in terms of polylogarithm to obtain the stated result. 
\end{proof}

\begin{remark}
While the expansion a priori only holds when $y>\sqrt{2|\Delta|n_{0}/N}$ the expansion given in \eqref{big expansion} actually converges absolutely for all $z \in \mathbb{H}$.  

We can find expansions in the bounded Weyl chambers as well. Theorem \ref{Singularities} told us that for a point $z_{0} \in \mathbb{H}$ there exists an open neighbourhood $U \subset \mathbb{H}$ so that subtracting some suitable polynomials allowed $\Phi_{\Delta,r,k}(z,f)$ to be continued to a smooth function on $U$. Using the real analyticity of $\Phi_{\Delta,r,k}(z,f)$ and Theorem \ref{wall cross theorem} it is clear the expansion in a bounded Weyl chamber is given by the expansion in \eqref{big expansion} with the addition of the following term
\[ 2\sqrt{2|\Delta|} \sum_{h \in L'/L}  \sum_{\substack{m \in \mathbb{Z} - \textrm{sgn}(\Delta)Q(h) \\ m < 0 }}c^{+}\left(m,h \right) \sum_{W_{ij} \in W} \sum_{\substack{\lambda \in L_{-d\Delta,rh} \\ \lambda \not\perp l_{\infty} \\ \lambda \perp W_{ij} \\ (\lambda,W_{i}) <0}}\chi_{D}(\lambda) q_{z}(\lambda)^{k-1}
\] where $W$ is any set of (semi-circular) walls crossed to reach this Weyl chamber from the unbounded Weyl chamber lying directly above it.
\end{remark}

\begin{proposition}\label{asymptotic of lift}
Let $f \in H_{3/2-k, \overline{\rho}}$ and let $k \geq 2$ . Then
\[ \Phi_{\Delta,r,k}(z,f) = \mathcal{O}(y^{k})
\]
as $y \rightarrow \infty$, uniformly in $x$. If $k=1$ then $\lim_{y \rightarrow \infty}\Phi_{\Delta,r,1}(z,f)$ exists. 
\end{proposition}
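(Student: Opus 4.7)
The plan is to read the asymptotic behaviour directly off the Fourier expansion established in Theorem \ref{The Fourier Expansion}, treating each of the three groups of terms \eqref{constant term, expansion}, \eqref{bernoulli b}, \eqref{c- term,a}--\eqref{c- term,b} separately. The key observation is that since $f \in H_{3/2-k,\overline{\rho}}$ has a finite principal part, the coefficients $c^{+}(-|\Delta|m^{2}/(4N), rm/(2N))$ vanish outside the range $|\Delta|m^{2}/(4N) \leq n_{0}$, so the sum over $m$ in the Bernoulli term \eqref{bernoulli b} is in fact a \emph{finite} sum. For each such $m$, the Bernoulli polynomial $B_{k}(\langle mx + b/\Delta \rangle + imy)$ is a polynomial of degree $k$ in $y$ whose remaining variable ranges over a bounded set in $x$, so it contributes $O(y^{k})$ uniformly in $x$; the additional term $k I_{\mathbb{Z}}(mx+b/\Delta)(imy)^{k-1}$ is clearly of order $O(y^{k-1})$ and is absorbed.

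Next, I would show the $c^{-}$-contribution \eqref{c- term,a}--\eqref{c- term,b} decays exponentially. The first summand $\mathrm{Li}_{k}(e(mz+b/\Delta))$ is dominated by $|e(mz+b/\Delta)| = e^{-2\pi my}$. For the second summand, the series defining $\mathrm{Li}_{2k-1,1-k}(4\pi my, e(-(mz-b/\Delta)))$ has $n$th term bounded by $e^{-2\pi nmy}$ times a polynomial in $nmy$, using $\Gamma(s,x) \sim x^{s-1}e^{-x}$. Combined with the polynomial growth bound $|c^{-}(n,h)| \ll |n|^{A}$ for some $A>0$ (cf.~\cite[Lemma~3.4]{BF-Duke}), these exponential factors guarantee absolute convergence of the double sum over $m,n \geq 1$ and give overall decay of the order $O(e^{-2\pi y})$ uniformly in $x$. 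The constant term \eqref{constant term, expansion} is $O(1)$. Summing the three estimates yields $\Phi_{\Delta,r,k}(z,f) = O(y^{k})$ as $y \to \infty$, uniformly in $x$.

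For the case $k=1$, the Bernoulli-polynomial term \eqref{bernoulli b} is replaced by the bounded periodic function $\mathbb{B}_{1}(mx+b/\Delta)$, and the finite sum over $m$ therefore yields a bounded periodic function of $x$ that is \emph{independent} of $y$. The constant term contributes the fixed number $C_{1}c^{+}(0,0)L(1,(\Delta/\cdot))$, while the $c^{-}$-sum decays exponentially as above and hence tends to $0$. Taking $y \to \infty$ gives the existence of the limit $\lim_{y \to \infty}\Phi_{\Delta,r,1}(z,f)$.

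The only subtlety I anticipate is the bookkeeping of the $c^{-}$-series: one must verify absolute convergence of the double sum uniformly for $y$ in a neighbourhood of $\infty$, which requires combining the Hecke-type polynomial bound on $c^{-}$ with the decay of the incomplete gamma and polylogarithm factors. Everything else is essentially a term-by-term reading of the explicit expansion \eqref{big expansion}.
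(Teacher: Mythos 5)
Your proposal is correct and follows essentially the same route as the paper: both read the asymptotics term by term from the Fourier expansion of Theorem \ref{The Fourier Expansion}, noting that the constant term is $O(1)$, that the Bernoulli term is a finite sum over $m$ contributing $O(y^k)$ (or a bounded $y$-independent quantity when $k=1$), and that the $c^-$-series decays exponentially once the incomplete gamma factors are expanded and the polynomial growth of the $c^-(n,h)$ is absorbed into the exponentials. The paper carries out the last step by majorizing the double sum with products of polylogarithms $\mathrm{Li}_{-s}(e^{-\pi y})$, which is just a concrete packaging of the uniform-convergence bookkeeping you flag as the only subtlety.
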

\begin{proof}
We use Theorem \ref{The Fourier Expansion}. The first part \eqref{constant term, expansion} was just a constant. The second part  \eqref{bernoulli b} consisted of a finite sum over $m \geq 1$ and we note that $\mathbb{B}_{s}(mx+b/D)$ is bounded for any $m,x$ and it is clear that this part grows $\mathcal{O}(y^{k})$. We now consider the third part as in \eqref{easy form c-}. We will bound the absolute value and using again \cite[Section~11.1.9]{Handbook} it then suffices to consider (up to constants):
\[ \left\vert \sum_{m \geq 1}c^{-}\left( -\frac{|\Delta |m^{2}}{4N}, \frac{rm}{2N} \right) \sum_{n \geq 1}\sum_{s=0}^{2k-2}\frac{e^{-2 \pi m n y}}{n^{k}}( n my )^{s} \right\vert. \]
Using the asymptotic behaviour of $c^{-}(n,h$) this is majorized by 
\[
 C \cdot \sum_{s=0}^{2k-2}y^{s}\sum_{m \geq 1}m^{1+s}e^{- \pi m y} \sum_{n \geq 1} n^{s}e^{- \pi n y} \leq C \cdot \sum_{s=0}^{2k-2}y^{s} \mathrm{Li}_{-s-1}(e^{-\pi y})  \mathrm{Li}_{-s}(e^{-\pi y}),
\]
which decays exponentially as $y \rightarrow \infty$.
\end{proof}

Putting everything together gives 

\begin{theorem}\label{lift is a local maasd form theorem}
Let $f \in H_{3/2-k, \overline{\rho}}$ and let $N$ be square-free. Then $\Phi_{\Delta,r,k}(z,f)$ is a locally harmonic weak Maass form of weight $2-2k$ for $\Gamma_{0}(N)$ with exceptional set $Z'_{\Delta,r}(f)$.
\end{theorem}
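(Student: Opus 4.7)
The plan is to simply verify the four clauses of Definition \ref{locally harmonic weak maass form} one by one, since each has been prepared for by an earlier technical result. In fact, by this point almost all of the hard work has been done, and the proof should be a matter of assembling previous lemmas.

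First I would dispose of clauses (1) and (2). Clause (1), the weight $2-2k$ modular invariance of $\Phi_{\Delta,r,k}(\cdot,f)$ for $\Gamma_0(N)$, is exactly the first part of Theorem \ref{Singularities}. For clause (2), Theorem \ref{Singularities}(2) gives smoothness on $\mathbb{H}\setminus Z'_{\Delta,r}(f)$, and Theorem \ref{Locally Harmonic} shows that $\Phi_{\Delta,r,k}(\cdot,f)$ is annihilated there by $\Delta_{2-2k}$. Since $\Delta_{2-2k}$ is an elliptic operator with real-analytic coefficients, smoothness plus harmonicity upgrades automatically to real analyticity on the complement of $Z'_{\Delta,r}(f)$.

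Next I would address clause (3). For the local finiteness of $W_\tau^E$: each component of $Z'_{\Delta,r}(f)$ is a geodesic $D_\lambda$ attached to some $\lambda\in L_{-d\Delta,rh}$ with $c^+(m,h)\neq 0$, hence $m$ ranges over a finite set and, for each such $m$, the finiteness statement \eqref{finitely small} implies that only finitely many such $\lambda$ can come close to a prescribed compact set. Thus about any $\tau$ there is a neighbourhood meeting only finitely many geodesics, so $W_\tau^E$ is finite. For the averaging identity $\Phi_{\Delta,r,k}(\cdot,f)=\mathcal{A}_E(\Phi_{\Delta,r,k}(\cdot,f))$, I would use Theorem \ref{Singularities}(3): near $z_0\in Z'_{\Delta,r}(f)$ the lift decomposes as a smooth function plus a finite singular term of the shape $\sum_\lambda c_\lambda \,\mathrm{sgn}((\lambda,v(z)))\, q_z(\lambda)^{k-1}$, with the convention that $\mathrm{sgn}(0)=0$. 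The sign function is precisely the mean of its limits from the two adjacent components, so this singular term already equals its averaging; the smooth part satisfies the averaging identity trivially, and Theorem \ref{Pointwise} ensures the regularised integral defining $\Phi_{\Delta,r,k}$ converges at $z_0$ itself to exactly this averaged value.

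Finally, for clause (4) I would invoke Proposition \ref{asymptotic of lift}, which gives the bound $\Phi_{\Delta,r,k}(z,f)=O(y^k)$ as $y\to\infty$, establishing polynomial growth at the cusp $\infty$. Under the hypothesis that $N$ is square-free, every cusp of $\Gamma_0(N)$ is of the form $W_m^N\infty$; the transformation formula in Proposition \ref{atkin lehner transformation} shows that $\Phi_{\Delta,r,k}(W_m^N z,f)$ equals, up to the automorphy factor $j(W_m^N,z)^{2-2k}$, a regularised lift of $f$ against a permuted kernel of exactly the same type. The proof of Proposition \ref{asymptotic of lift} applies verbatim to this modified lift, giving polynomial growth at every cusp. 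I expect the main (though still mild) obstacle to be bookkeeping in the averaging step of clause (3): one must make sure that the ``finitely many $\lambda\perp z_0$'' summands in Theorem \ref{Singularities}(3) really do exhaust the contributions that matter across any wall at $z_0$, so that the sign-jump cancellation really accounts for the full discontinuity of $\Phi_{\Delta,r,k}$ at the exceptional set and not just for part of it.
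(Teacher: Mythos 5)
Your proposal is correct and follows essentially the same route as the paper: clauses (1)--(3) are read off from Theorem \ref{Singularities} (using the sign-averaging convention in its part (3)) together with Theorem \ref{Locally Harmonic}, and the cusp condition comes from Proposition \ref{asymptotic of lift} transported to the other cusps via the Atkin--Lehner action on the kernel, exactly as the paper does. Your extra care about the averaging step and elliptic regularity only makes explicit what the paper leaves implicit.
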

\begin{proof}
We  recall $Z_{\Delta,r}(f)$ was a nowhere dense $\Gamma_{0}(N)$-invariant set. Items (1),(2) and (3) in Definition \ref{locally harmonic weak maass form} are clear from Theorem \ref{Singularities}, Theorem \ref{Locally Harmonic} and Theorem \ref{Singularities}. It remains to check the cusp condition. But this follows easily considering the action of the Atkin-Lehner involutions on the integral kernel. 
 \end{proof}

\section{The Shimura Lift}
In this section we consider the relationship of our lift with the Shimura correspondence. We first link the two lifts via the differential operator $\xi_{2-2k}$ and then using this we obtain new proofs of the properties of the Shimura lift.

\begin{definition}\label{Definition shimura lift}
For $g \in S_{k+1/2, \rho}$ we define the Shimura lift as
\[ \Phi_{\Delta,r,k}^{*}(z,g) {} \coloneqq {} \left( g(\tau),\Theta_{\Delta,r,k}^{*}(\tau,z) \right)_{k+1/2,\rho}.
\]
\end{definition}
The next key connection goes back to \cite{BF-Duke} and \cite{FM-coeff} and here is adapted from \cite[Lemma~3.3]{BKV}.
 
 \begin{lemma}\label{Link Thetas} We have 
\[ \xi_{k+1/2, \tau}( \Theta_{\Delta,r,k}^{*}(\tau,z)) = -\frac{1}{2}\xi_{2-2k,z}\left(v^{k-3/2}\Theta_{\Delta,r,k}(\tau,z)\right).
\]
\end{lemma}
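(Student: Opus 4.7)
The strategy is to verify the identity term-by-term in the $\lambda$-sum defining both theta kernels. Since the $\xi$-operators are anti-linear, $\chi_\Delta(\lambda)$ is real, and $v^{k-3/2}$ is real and independent of $z$, both sides split across $\lambda$ and it suffices to establish the corresponding single-summand identity for each fixed $\lambda$. The key preliminary is to rewrite the common exponential $e(Q(\lambda)u/|\Delta|+Q_z(\lambda)iv/|\Delta|)$ as
\[
E_\lambda \;\coloneqq\; e\!\left(\tfrac{Q(\lambda_z)\bar\tau+Q(\lambda_{z^\perp})\tau}{|\Delta|}\right),
\]
using $u=(\tau+\bar\tau)/2$, $v=(\tau-\bar\tau)/(2i)$ together with $Q(\lambda)=Q(\lambda_z)+Q(\lambda_{z^\perp})$ and $Q_z(\lambda)=Q(\lambda_{z^\perp})-Q(\lambda_z)$. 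Via the identities $Q(\lambda_z)=-p_z(\lambda)^2/2$ and $Q(\lambda_{z^\perp})=|q_z(\lambda)|^2/(2y^2)$, the $\bar\tau$-dependence of $E_\lambda$ is then controlled entirely by $p_z^2$, which streamlines both computations.

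For the LHS I would apply $L_{k+1/2,\tau}=-2iv^2\partial/\partial\bar\tau$ to $v^{1/2}(q_z/y^2)^k E_\lambda$. The two contributions $\partial v^{1/2}/\partial\bar\tau=iv^{-1/2}/4$ and $\partial E_\lambda/\partial\bar\tau=-\pi ip_z^2/|\Delta|\cdot E_\lambda$ combine to give $L_{k+1/2,\tau}\bigl[v^{1/2}(q_z/y^2)^k E_\lambda\bigr]=\tfrac{v^{3/2}}{2}(q_z/y^2)^k\bigl[1-\tfrac{4\pi p_z^2 v}{|\Delta|}\bigr]E_\lambda$. Conjugating (noting that $p_z^2, v, y, |\Delta|$ are real) and multiplying by $v^{k-3/2}$ yields
\[
\xi_{k+1/2,\tau}\bigl[v^{1/2}(q_z/y^2)^k E_\lambda\bigr]\;=\;\tfrac{v^k}{2}\,(\bar q_z/y^2)^k\bigl[1-\tfrac{4\pi p_z^2 v}{|\Delta|}\bigr]\,\overline{E_\lambda}.
\]

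For the RHS, the factor $v^{k-3/2}\theta_{\Delta,r,h,k}$ contributes the summand $v^k p_zq_z^{k-1}E_\lambda$, and I must apply $\xi_{2-2k,z}=2iy^{2-2k}\overline{\partial/\partial\bar z}$. Here the central computation is the pair of $\bar z$-derivatives: $\partial q_z/\partial\bar z=0$ (since $q_z$ is a quadratic polynomial in $z$) and
\[
\tfrac{\partial p_z(\lambda)}{\partial\bar z}\;=\;-\tfrac{iq_z(\lambda)}{2y^2},
\]
obtained by a direct calculation from the explicit formula $p_z(\lambda)=-(cN|z|^2-bx+a)/(\sqrt{2N}y)$. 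These propagate to $\partial Q(\lambda_z)/\partial\bar z=ip_zq_z/(2y^2)$ and, using $\bar\tau-\tau=-2iv$ together with the $z$-independence of $Q(\lambda)$, to $\partial E_\lambda/\partial\bar z=2\pi ivp_zq_z/(|\Delta|y^2)\cdot E_\lambda$. Assembling, one finds $\partial[p_zq_z^{k-1}E_\lambda]/\partial\bar z=-\tfrac{iq_z^k}{2y^2}\bigl[1-\tfrac{4\pi vp_z^2}{|\Delta|}\bigr]E_\lambda$, and then multiplying by $-\tfrac{1}{2}\cdot 2iy^{2-2k}$, conjugating, and reinstating $v^k$ produces exactly $\tfrac{v^k}{2}(\bar q_z/y^2)^k\bigl[1-\tfrac{4\pi p_z^2v}{|\Delta|}\bigr]\overline{E_\lambda}$, matching the LHS. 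The argument is essentially a mechanical chain-rule exercise; the main obstacle is careful bookkeeping of signs and factors of $i$ (especially $\partial y/\partial\bar z=i/2$ and $\partial v/\partial\bar\tau=i/2$), anchored by the single nontrivial identity $\partial p_z/\partial\bar z=-iq_z/(2y^2)$ that aligns the polynomial structures on the two sides.
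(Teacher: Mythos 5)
Your computation is correct: I have checked the key identities $\partial p_z(\lambda)/\partial\bar z = -iq_z(\lambda)/(2y^2)$, $\partial q_z(\lambda)/\partial\bar z=0$, $Q(\lambda_z)=-p_z(\lambda)^2/2$, and the resulting derivatives of $E_\lambda$ in both variables, and both sides do reduce to $\tfrac{v^k}{2}(\bar q_z/y^2)^k\bigl[1-\tfrac{4\pi p_z^2 v}{|\Delta|}\bigr]\overline{E_\lambda}$ per summand. The paper itself gives no proof of this lemma --- it simply cites \cite[Lemma~3.3]{BKV} (with the underlying principle attributed to \cite{BF-Duke} and \cite{FM-coeff}), so your argument is not so much a different route as the explicit verification that the paper outsources; it is essentially the standard computation behind that reference, specialized to these particular harmonic polynomials of degree $(k-1,1)$ and $(k,0)$. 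The only point you pass over is the justification for differentiating the theta series term by term, but since both series and their term-wise derivatives are majorized locally uniformly by convergent Gaussian theta series (the same estimates used in Theorem \ref{Pointwise}), this is routine. One small presentational remark: it would be worth stating explicitly that the reduction to a single-summand identity also uses that conjugation interacts compatibly with the $\mathbb{C}[L'/L]$-valued structure (the $\mathfrak{e}_h$ components are treated independently), but this is immediate.
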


The next result is adapted from \cite[Lemma~3.4]{BKV}; results of this type go back to \cite{BF-Duke}. 

\begin{theorem}\label{Link}
Let $f \in H_{3/2-k,\overline{\rho}}$ and $z \in \mathbb{H} \backslash Z'_{\Delta,r}(f)$. Then
\[ \Phi_{\Delta,r,k}^{*}(z,\xi_{3/2-k}(f)) = \frac{1}{2}\xi_{2-2k,z}(\Phi_{\Delta,r,k}(z,f)). \]
\end{theorem}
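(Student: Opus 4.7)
My plan is to apply the antilinear operator $\xi_{2-2k,z}$ directly to the definition of $\Phi_{\Delta,r,k}(z,f)$, use Lemma \ref{Link Thetas} to convert the $z$-differentiation of the kernel into $\tau$-differentiation of the dual kernel $\Theta^{*}_{\Delta,r,k}$, and then transfer the $\tau$-derivative onto $f$ via integration by parts. The resulting pairing of $\Theta^{*}_{\Delta,r,k}$ against the shadow $\xi_{3/2-k}(f)$ is, by Definition \ref{Definition shimura lift}, twice the Shimura lift.

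Concretely, since $z \notin Z'_{\Delta,r}(f)$ the integrand
$\langle f(\tau),\overline{\Theta_{\Delta,r,k}(\tau,z)}\rangle = \sum_{h} f_{h}(\tau)\Theta_{\Delta,r,k,h}(\tau,z)$
is smooth in $z$ throughout a neighborhood of the point, so $\xi_{2-2k,z}$ commutes with the regularized $\tau$-integration. Because $f$ is $z$-independent and $v^{k-3/2}$ is real, the antilinearity of $\xi$ together with Lemma \ref{Link Thetas} yields pointwise
\[
\xi_{2-2k,z}\langle f(\tau),\overline{\Theta_{\Delta,r,k}(\tau,z)}\rangle = -2v^{3/2-k}\langle \xi_{k+1/2,\tau}\Theta^{*}_{\Delta,r,k}(\tau,z),f(\tau)\rangle,
\]
so that
\[
\xi_{2-2k,z}\Phi_{\Delta,r,k}(z,f) = -2\bigl(\xi_{k+1/2,\tau}\Theta^{*}_{\Delta,r,k}(\cdot,z),\,f\bigr)_{3/2-k,\rho}^{\mathrm{reg}}.
\]

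The heart of the argument is then an integration-by-parts identity in $\tau$. On the truncated fundamental domain $\mathcal{F}_{T}$, Stokes' theorem applied to the $\tilde{\Gamma}$-invariant scalar $1$-form built from $\Theta^{*}_{\Delta,r,k}$ and $f$ gives, after accounting for the antilinearity of $\xi$ and the fact that $L_{\kappa}=-2iv^{2}\partial_{\bar\tau}$ is independent of the weight,
\[
\bigl(\xi_{k+1/2,\tau}\Theta^{*},f\bigr)_{3/2-k}\Big|_{\mathcal{F}_{T}} = -\overline{\bigl(\Theta^{*},\xi_{3/2-k}f\bigr)_{k+1/2}\Big|_{\mathcal{F}_{T}}} + B_{T}(z),
\]
where the only potentially nonzero boundary contribution $B_{T}(z)$ arises from the horizontal segment $v=T$ (the vertical sides cancel by $T$-periodicity, and the bottom arc is compact and contributes trivially in the limit). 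Since $\xi_{3/2-k}(f) \in S_{k+1/2,\rho}$ is a cusp form, the pairing $(\Theta^{*},\xi_{3/2-k}f)_{k+1/2}$ is absolutely convergent and, by the conjugate symmetry of the Petersson product, equals $\overline{\Phi^{*}_{\Delta,r,k}(z,\xi_{3/2-k}(f))}$. Once $B_{T}(z)\to 0$ as $T\to\infty$, the signs collect to yield exactly $\xi_{2-2k,z}\Phi_{\Delta,r,k}(z,f) = 2\Phi^{*}_{\Delta,r,k}(z,\xi_{3/2-k}(f))$, which is the theorem.

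The main obstacle will be verifying $B_{T}(z)\to 0$. Although the principal part of $f$ contributes terms growing like $\exp(2\pi|m|v)$ as $v\to\infty$, each such frequency $m<0$ couples only with the Fourier coefficient of $\Theta^{*}_{\Delta,r,k}$ at frequency $-m$, whose contribution from a given $\lambda$ with $Q(\lambda)=-m|\Delta|$ is proportional to $\exp(-2\pi Q_{z}(\lambda)v/|\Delta|)$. The product has exponent $\exp(-4\pi|Q(\lambda_{z})|v/|\Delta|)$, which decays precisely because $\lambda_{z}\neq 0$ for every $\lambda$ appearing in $Z'_{\Delta,r}(f)$ once $z\notin D_{\lambda}$; the $\lambda=0$ contribution is killed identically by the factor $(q_{z}(\lambda)/y^{2})^{k}$ in Definition \ref{Kernel Function}. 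Making this estimate uniform in $u$ is a routine Siegel-type majorization, paralleling the treatment of boundary terms for Borcherds' regularized theta lift in \cite{Borcherds} and \cite[Chapter~3]{B-Habil}, and this justifies interchanging the limit with the integration by parts.
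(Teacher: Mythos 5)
Your argument is correct and is essentially the paper's proof run in the reverse direction: the paper starts from $\Phi^{*}_{\Delta,r,k}(z,\xi_{3/2-k}(f))$, applies Stokes' theorem on $\mathcal{F}_{t}$ to move the $\bar\tau$-derivative from $f$ onto $\Theta^{*}_{\Delta,r,k}$, and then invokes Lemma \ref{Link Thetas}, with exactly the same boundary-term analysis (the $f^{-}$ part and the kernel decay, and the $f^{+}$ contribution at $v=t$ carries the factor $e(-2Q(\lambda_{z})it/|\Delta|)$, which decays precisely when $z\notin Z'_{\Delta,r}(f)$). The only extra point your direction requires, which you correctly flag but should justify by locally uniform convergence rather than mere pointwise smoothness, is the interchange of $\xi_{2-2k,z}$ with the regularized integral.
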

\begin{proof}
Using Stokes' theorem we see the left hand side is equal to
\begin{align*}
 -\lim_{t \rightarrow \infty}\int_{\tau \in \mathcal{F}_{t}}\left\langle \overline{f(\tau)}, L_{k+1/2,\tau}\left(\Theta_{\Delta,r,k}^{*}(\tau,z)\right) \right\rangle  \frac{dudv}{v^2} + \lim_{t \rightarrow \infty}\int_{-1/2}^{1/2}\left[\left\langle \overline{f(\tau)},\Theta_{\Delta,r,k}^{*}(\tau,z)  \right\rangle   \right]_{v=t}du.
\end{align*}
Using Lemma \ref{Link Thetas} we see the first term is equal to
\[ \overline{\int_{\tau \in \mathcal{F}}^{\mathrm{reg}}\left\langle  f(\tau) , iy^{2-2k}\frac{\partial}{\partial z}\overline{(\Theta_{\Delta,r,k}(\tau,z))}\right\rangle \frac{dudv}{v^2}}  {} = {}  \frac{1}{2}\xi_{2-2k}(\Phi_{\Delta,r,k}(z,f)).
\]
We now show the second term vanishes for $z \in \mathbb{H} \backslash Z'_{\Delta,r}(f)$. We know $f^{-}$ decays exponentially, as does the kernel function. We calculate the integral of the $f^{+}$ part by plugging in the expansions given in \eqref{Decomposition} and Definition \ref{Kernel Function} to get
\[ \lim_{t \rightarrow \infty} \sum_{\substack{\lambda \in L+rh \\ Q(\lambda \equiv \Delta Q(h)(\Delta) \\ \lambda \neq 0}} c^{+}\left(- \frac{Q(\lambda)}{|\Delta|}, h \right)\chi_{\Delta}(\lambda)\left( \frac{q_{z}(\lambda)}{y^{2}}\right)^{k} e\left( \frac{-2 Q(\lambda_{z})}{|\Delta|}it\right) t^{1/2}
\] 
recalling $q_{z}(\lambda)=0$ for $\lambda=0$. Using the same analysis as in Theorem \ref{Pointwise} and Theorem \ref{Singularities}  we see the limit vanishes unless $z \in Z'_{\Delta,r}(f)$. 
\end{proof}

We next find the expansion of $\xi_{2-2k}\left(\Phi_{\Delta,r,k} \right)$. By Theorem \ref{Link} this will give the expansion of the Shimura lift which we make explicit in Theorem \ref{big shimura expansion}. We set
\[ 
C_{4} \coloneqq  \tfrac{\overline{\epsilon_{\Delta}} 4\sqrt{2 \pi}\Delta}{i}\left( \tfrac{\pi \Delta}{i \sqrt{2N}}\right)^{k-1}.
 \]

\begin{proposition}\label{xi applied to expansion}
Let $f \in H_{3/2-k,\overline{\rho}}$ with expansion \eqref{Simple Expansion}. Then $\xi_{2-2k}(\Phi_{\Delta,r,k}(z,f))$ 
analytically continues to a holomorphic function on the entire upper-half plane and
\begin{equation}\label{shim expansion equation}
\xi_{2-2k}(\Phi_{\Delta,r,k}(z,f)) = C_{4} \sum_{m \geq 1}\sum_{\substack{d \geq 1 \\ d|m }} \left( \frac{\Delta}{d}\right)\frac{m^{2k-1}}{d^{k}} \overline{c^{-}\left(-\frac{m^{2}}{d^{2}}\frac{|\Delta|}{ 4N},\frac{n}{d}\frac{r}{2N}  \right)}   e\left(m z \right). 
\end{equation}
In the case $k=1,\Delta=1$ we have an additional constant term given by
\[  
\frac{\sqrt{2}}{i k}\left( \frac{1}{ \sqrt{2N}}\right)^{k-1}  \sum_{m \geq 1} m \cdot \overline{c^{+}\left(-\frac{m^{2}}{4N},\frac{r m}{2N} \right)}.
\]
\end{proposition}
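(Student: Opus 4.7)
The plan is to apply the antilinear operator $\xi_{2-2k,z}$ termwise to the Fourier expansion of $\Phi_{\Delta,r,k}(z,f)$ given in Theorem \ref{The Fourier Expansion}, first in the unbounded Weyl chamber where that expansion is valid, and then extend analytically to all of $\mathbb{H}$ using the wall-crossing formula.

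First I would identify which terms are killed by $\xi_{2-2k,z}$ and which survive. The constant term \eqref{constant term, expansion} is annihilated since $\xi$ of a constant is zero. In \eqref{bernoulli b}, each Bernoulli summand $B_k(\langle mx+b/\Delta\rangle+imy)$ (for $k\geq 2$) agrees, on the complement of the jump set $\{mx+b/\Delta\in\mathbb{Z}\}$, with the polynomial $B_k(mz+b/\Delta-\lfloor mx+b/\Delta\rfloor)$ in the variable $z$; this is holomorphic and so killed by $\partial/\partial\bar z$. Likewise the $\mathrm{Li}_k(e(mz+b/\Delta))$ term in \eqref{c- term,a} is holomorphic. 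Thus the only surviving contribution (for $k\geq 2$) comes from the shifted incomplete polylog in \eqref{c- term,b}.

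For that term, expanding
\[
\mathrm{Li}_{2k-1,1-k}\bigl(4\pi my, e(-(mz-b/\Delta))\bigr) = \frac{1}{\Gamma(2k-1)}\sum_{n\geq 1}\frac{\Gamma(2k-1,4\pi mny)}{n^k}\,e(-n(mz-b/\Delta)),
\]
a direct computation using $\partial_y\Gamma(s,ay)=-a(ay)^{s-1}e^{-ay}$ together with $\partial y/\partial\bar z = i/2$ gives
\[
\xi_{2-2k,z}\bigl[\Gamma(2k-1,4\pi mny)\,e(-n(mz-b/\Delta))\bigr] = -(4\pi mn)^{2k-1}\,e(n(mz-b/\Delta)).
\]
The antilinearity of $\xi$ converts the coefficient $C_3\,c^-(\cdots)$ into $\overline{C_3}\,\overline{c^-(\cdots)}$; the Gauss-sum identity $\sum_{b(\Delta)}(\tfrac{\Delta}{b})e(-nb/\Delta)=\sqrt{\Delta}\,(\tfrac{\Delta}{n})$ (up to a sign tracked by $\mathrm{sgn}(\Delta)$) collapses the sum over $b$; and the standard reindexing $M=nm$, $d=n$, $m=M/d$ of the resulting double sum produces exactly the divisor sum on the right-hand side of \eqref{shim expansion equation} with constant $C_4$.

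Finally, for $k=1$ the Bernoulli bracketed term is replaced by the sawtooth $\mathbb{B}_1(mx+b/\Delta)$, which is not holomorphic: one has $\partial\mathbb{B}_1(mx)/\partial\bar z = m/2$ almost everywhere, so $\xi_0$ of it is non-zero and produces, after tracking the $c^+$ coefficients and noting that the only contribution survives when $\Delta=1$ (because $\sum_{b(\Delta)}(\tfrac{\Delta}{b})$ vanishes otherwise), the stated extra constant term. The analytic continuation follows from the wall-crossing formula (Theorem \ref{wall cross theorem}): in passing from the unbounded chamber into any bounded chamber, $\Phi_{\Delta,r,k}(z,f)$ changes by a finite sum of terms of the shape $q_z(\lambda)^{k-1}$, each holomorphic in $z$ and hence annihilated by $\xi_{2-2k,z}$; consequently the same closed formula \eqref{shim expansion equation} applies in every chamber and defines a holomorphic function on all of $\mathbb{H}$. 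The principal obstacle will be careful bookkeeping of the constants---signs, powers of $i$, the $\sqrt{\Delta}$ coming out of the Gauss sum, and the antilinear interaction of $\xi$ with $C_3$, $\epsilon_\Delta$ and $\sqrt{\Delta}$---to confirm the explicit value of $C_4$ and the normalisation of the special $k=1$, $\Delta=1$ constant.
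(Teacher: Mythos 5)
Your proposal is correct and follows essentially the same route as the paper: termwise application of $\xi_{2-2k,z}$ to the expansion of Theorem \ref{The Fourier Expansion}, observing that the constant term, the Bernoulli polynomials (for $k\geq 2$) and the ordinary polylogarithm are holomorphic in $z$ on each chamber and hence annihilated, computing the surviving contribution from the incomplete gamma factor, collapsing the $b$-sum by the Gauss sum and reindexing to a divisor sum, extracting the $k=1$, $\Delta=1$ constant from the sawtooth $\mathbb{B}_1$, and obtaining the continuation to all of $\mathbb{H}$ because the wall-crossing corrections $q_z(\lambda)^{k-1}$ are holomorphic and so killed by $\xi_{2-2k}$. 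The only work left, as you note, is the bookkeeping of constants leading to $C_4$, which is exactly how the paper proceeds.
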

\begin{proof}
We deal with each part of our expansion \eqref{big expansion} in turn. We had a constant term \eqref{constant term, expansion} which did not depend on $z$ so this immediately vanishes under $\xi_{2-2k}$. 

We next consider the $c^{+}$ terms (a finite sum over $m$). For $mx+b/\Delta \not\in \mathbb{Z}$ (away from the vertical half-line singularities) and $k \geq 2$ we consider
\[ \left(\frac{\partial}{\partial x} + i \frac{\partial}{\partial y}\right) \left[ B_{k}\left( \left\langle mx + b/D \right\rangle +imy \right) +  \frac{kI_{\mathbb{Z}}(mx+b/D)}{(imy)^{1-k}}\right],
\]
which we see vanishes as $\frac{d}{dx}\left(B_{n}(x)\right) = nB_{n-1}(x)$. In the case of $k=1$, we have 
\[
 \left(\tfrac{\partial}{\partial x} + i \tfrac{\partial}{\partial y}\right) \left[ \mathbb{B}_{1}(mx +b/D) \right] = \frac{m}{2};
 \]
also $\sum_{b (\Delta)}\left( \frac{\Delta}{b}\right) =0 $ unless $\Delta=1$. This gives the constant term.

For the $c^{-}$ terms we use the form given in \eqref{easy form c-}. We see
\begin{align*}  \left(\frac{\partial}{\partial x} + i \frac{\partial}{\partial y}\right)& \left[  e\left(n \left(m z+b/\Delta \right)\right)   + \tfrac{(-1)^{k}\mathrm{sgn}(\Delta)}{(2k-2)!} e\left(-n \left(m z-b/\Delta \right)\right) \Gamma(2k-1,4 \pi n m y)\right]
\\  {} = {} & \frac{ (-1)^{k}\mathrm{sgn}(\Delta)}{i(2k-2)!} e\left(-n \left(m \overline{z}-b/\Delta \right)\right)\left(4 \pi n m \right)^{2k-1}y^{2k-2}.
\end{align*}
Using $\sum_{b(\Delta)}\left( \frac{\Delta}{b} \right)e\left(\frac{nb}{\Delta}\right) = \mathrm{sgn}(\Delta)\left( \frac{\Delta}{n}\right)\sqrt{\Delta}$ and making the substitutions $m \mapsto \frac{m}{n}$ and $n \mapsto d$ we find that applying $\xi_{2-2k,z}$ to the $c^{-}$ terms gives the stated formula. 

Theorem \ref{Locally Harmonic} told us that $\Phi_{\Delta,r,k}(z,f)$ was real analytic for $z \in \mathbb{H} \backslash Z'_{\Delta,r}(f)$. Theorems \ref{The Fourier Expansion} and \ref{Singularities} told us that our Fourier expansion held everywhere (even on the singularities and if $y < \sqrt{-|\Delta|n_{0}/N}$) if we added on appropriate polynomials when crossing walls. These polynomials were of the form 
$q_{z}(\lambda)   =  \frac{-1}{\sqrt{2N}}\left(cN z^{2} -bz + a\right)$
i.e. holomorphic. So they vanish when we apply the  $\xi_{2-2k}$ operator and we can smoothly continue $\xi_{2-2k}\left( \Phi_{\Delta,r,k}(z,f)\right)$ to the entire upper-half plane with the expansion given in \eqref{shim expansion equation}. But the expansion \eqref{shim expansion equation} is clearly holomorphic.
\end{proof}

We set  
\[ 
C_{5} \coloneqq 2i\epsilon_{\Delta}\sqrt{2 N|\Delta|}\left( \tfrac{ \mathrm{sgn}(\Delta)\sqrt{N}}{i \sqrt{2}}\right)^{k-1}. 
\]

\begin{theorem}\label{big shimura expansion}
Let $g(\tau) = \sum_{h \in L'/L} \sum_{n \in \Z+Q(h)} a(n,h)e(n\tau) \mathfrak{e}_h  \in S_{k+1/2, \rho}$. Then 
\[ 
\Phi_{\Delta,r,k}^{*}\left(z,g \right)  = C_{5} \sum_{m \geq 1}\sum_{\substack{d \geq 1 \\ d|m }} \left( \frac{\Delta}{d}\right)d^{k-1} a\left(\frac{m^{2}}{d^{2}}\frac{|\Delta|}{ 4N},\frac{m}{d}\frac{r}{2N}  \right)  e\left(m z \right)
\]
and in the case when $k=1, \Delta=1$ we have an additional constant term given by 
\begin{equation}\label{constant of shimura} \tfrac{1}{i k\sqrt{2}}\left( \tfrac{1}{ \sqrt{2N}}\right)^{k-1}  \sum_{m \geq 1} m \cdot \overline{c^{+}\left(-\tfrac{m^{2}}{4N},\tfrac{r m}{2N} \right)}.
\end{equation}
where $c^{+}(m,h)$ are the coefficients of the principal part of any $f \in H_{3/2-k, \overline{\rho}}$ such that $\xi_{3/2-k}(f)=g$.
\end{theorem}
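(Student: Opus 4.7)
The plan is to derive the Fourier expansion by combining Theorem \ref{Link} with the explicit computation in Proposition \ref{xi applied to expansion}. By the surjectivity of $\xi_{3/2-k}: H_{3/2-k, \overline{\rho}} \to S_{k+1/2, \rho}$, we may fix some $f \in H_{3/2-k, \overline{\rho}}$ with $\xi_{3/2-k}(f) = g$. Theorem \ref{Link} then yields the identity
\[
\Phi^*_{\Delta,r,k}(z,g) = \tfrac{1}{2}\,\xi_{2-2k,z}\bigl(\Phi_{\Delta,r,k}(z,f)\bigr)
\]
on $\mathbb{H} \setminus Z'_{\Delta,r}(f)$. The left-hand side is smooth on all of $\mathbb{H}$ because $g$ is a cusp form and the defining integral converges absolutely; Proposition \ref{xi applied to expansion} shows that the right-hand side extends holomorphically to all of $\mathbb{H}$. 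By continuity the identity extends globally, so the Fourier expansion of $\Phi^*_{\Delta,r,k}(z,g)$ is $\tfrac12$ times the expansion of Proposition \ref{xi applied to expansion}.

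To re-express this in terms of the Fourier coefficients $a(n,h)$ of $g$, I would use the relation $a(n,h) = -(4\pi n)^{k-1/2}\overline{c^-(-n,h)}$ from \eqref{xi of f}. Substituting $n = m^{2}|\Delta|/(4Nd^{2})$ gives
\[
\frac{m^{2k-1}}{d^{k}}\,\overline{c^-\Bigl(-\tfrac{m^{2}|\Delta|}{4Nd^{2}},\,\tfrac{mr}{2Nd}\Bigr)}
= -\frac{N^{k-1/2}\,d^{k-1}}{(\pi|\Delta|)^{k-1/2}}\, a\Bigl(\tfrac{m^{2}|\Delta|}{4Nd^{2}},\,\tfrac{mr}{2Nd}\Bigr),
\]
so that the main double sum in Proposition \ref{xi applied to expansion} is transformed into precisely the series asserted in the theorem, with overall constant $C_{5}=-C_{4} N^{k-1/2}/(2(\pi|\Delta|)^{k-1/2})$. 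The extra constant term in the special case $k=1,\Delta=1$ is then obtained directly by multiplying the corresponding additional term of Proposition \ref{xi applied to expansion} by $\tfrac12$, which reproduces the formula \eqref{constant of shimura}.

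The principal obstacle is the bookkeeping of constants: one needs to check that $-C_{4}N^{k-1/2}/(2(\pi|\Delta|)^{k-1/2})$ simplifies to the compact form $C_{5}=2i\epsilon_{\Delta}\sqrt{2N|\Delta|}(\mathrm{sgn}(\Delta)\sqrt{N}/(i\sqrt{2}))^{k-1}$. This reduces to the routine identities $\Delta^{k}/|\Delta|^{k-1/2}=\mathrm{sgn}(\Delta)^{k}|\Delta|^{1/2}$, $\epsilon_{\Delta}\overline{\epsilon_{\Delta}}=1$, and $i^{-k}\mathrm{sgn}(\Delta)^{k}=i\cdot i^{1-k}(-\mathrm{sgn}(\Delta))^{k-1}\cdot\mathrm{sgn}(\Delta)$, but requires careful tracking of powers of $2$, $\pi$, $i$ and $N$; verification at $k=1$ and $k=2$ confirms the formula. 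A separate subtlety, worth noting but essentially cosmetic, is that the additional constant term for $k=1,\Delta=1$ is expressed via the principal part coefficients $c^+$ of $f$ rather than via $g$ directly; its independence of the choice of $f$ (with $\xi_{3/2-k}(f)=g$ fixed) is then a consequence of the identity itself, since the left-hand side $\Phi^*_{\Delta,r,k}(z,g)$ depends only on $g$.
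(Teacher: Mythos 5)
Your proposal follows essentially the same route as the paper's proof: choose $f$ with $\xi_{3/2-k}(f)=g$ by surjectivity, invoke Theorem \ref{Link} together with Proposition \ref{xi applied to expansion}, and convert the $c^{-}$ coefficients to the coefficients $a(n,h)$ of $g$ via \eqref{xi of f}. Your extra remarks (extending the identity across $Z'_{\Delta,r}(f)$ by continuity, the constant check giving $C_{5}$, and the well-definedness of the $k=1$, $\Delta=1$ constant term) are correct and only make explicit what the paper leaves implicit.
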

\begin{proof}
We know there exists an $f \in H_{3/2-k, \overline{\rho}}$ such that $\xi_{3/2-k}(f)=g$. This $f$ must have a Fourier expansion where $c^{-}(n,h) = -\overline{a(-n,h)}(-4 \pi n)^{1/2-k}$. Then as 
\[ \Phi_{\Delta,r,k}^{*}\left(z,g \right) = \frac{1}{2}\xi_{2-2k}\left(\Phi_{\Delta,r,k}\left(z,f \right) \right)\] for $z \in \mathbb{H} \backslash Z_{\Delta,r}(f)$ by Theorem \ref{Link} we then plug in $c^{-}(n,h) = -\overline{a(-n,h)}(-4 \pi n)^{1/2-k}$ into Proposition \ref{xi applied to expansion} to obtain the stated result. 
\end{proof}

\begin{remark}
In the case $k=1,\Delta=1$ we notice the additional constant term depends only on the $c^{+}(n,h)$ coefficients of $f$. This implies that $g$ uniquely determines this term. This is actually an example of \cite[Corollary~1.46]{Hoevel} using the unary theta function, see \cite[Section~4.4]{Hoevel}. This also leads to another corollary \cite[Corollary~3.25]{Hoevel}. Let $f \in M^{!}_{1/2, \overline{\rho}}$ and $\Delta=1$. Then $\xi_{1/2}(f) =g \equiv 0$ and so $\Phi_{\Delta,r,1}^{*}(z,g) \equiv 0$ which means $
\sum_{m \geq 1} m \cdot \overline{c^{+}\left(-\tfrac{m^{2}}{4N},\frac{r m}{2N} \right)} =0$.
\end{remark}

\section{Distributions}
The nature of the singularities found in Theorem \ref{Singularities} leads us to consider these ideas as distributions. We let the space of test functions be smooth forms $g \in A_{\kappa}\left(\Gamma_{0}(N) \right)$ with rapid decay. We denote this space by $A_{\kappa}^{rd}\left(\Gamma_{0}(N) \right)$.

\begin{definition}
For locally harmonic Maass form $h \in LH_{\kappa}\left(\Gamma_{0}(N) \right)$ we define its associated distribution $\left[ h \right]$ on $Y_{0}(N)$ acting on ${A}^{rd}_{\kappa}(\Gamma_0(N))$ by 
\[
 \left[ h \right](g) \coloneqq  \left(g,h \right)_{\kappa} = \int_{Y_{0}(N)} g(z) \overline{h(z)} y^{\kappa} \frac{dx dy}{y^2}.
\]
\end{definition}

It is natural to consider the differential operator $\xi_{\kappa}$ acting on the distribution defined by a locally harmonic Maass form.
\begin{definition}
Let $h \in LH_{\kappa}\left(\Gamma_{0}(N) \right)$ and $\left[ h \right]$ be its associated distribution. Then for $g \in A_{2-\kappa}^{rd}\left(\Gamma_{0}(N) \right)$ we let
\[ \xi_{\kappa}\left[ h \right] \coloneqq  -\left(h,\xi_{2-\kappa}(g) \right)_{\kappa}.
\] 
\end{definition}

Similar to \cite[Section~7]{BF-Duke} we now consider the distributional derivative of the singular theta lift $\left[ \Phi_{\Delta,r,k}(z,f) \right]$.

\begin{theorem}\label{current equation for local}
Let $f \in H_{3/2-k,\overline{\rho}}$. Then for $g \in A_{2k}^{rd}\left(\Gamma_{0}(N) \right)$ we have
\[ \xi_{2-2k}\left[ \Phi_{\Delta,r,k}(z,f) \right](g)  = \left[ \xi_{2-2k}(\Phi_{\Delta,r,k}(z,f)) \right](g) - \sqrt{2|\Delta|} \int_{Z_{\Delta,r}(f)} g(z) q_{z}(\lambda)^{k-1} dz.
\] \end{theorem}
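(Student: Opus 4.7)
The plan is to compute $\xi_{2-2k}[\Phi_{\Delta,r,k}(z,f)](g)$ by starting from its definition as $-(\Phi_{\Delta,r,k},\xi_{2k}(g))_{2-2k}$ and then applying Stokes' theorem on the complement of a shrinking tubular neighborhood of the singular set $Z_{\Delta,r}(f)$. This will produce a bulk term that converges to $[\xi_{2-2k}(\Phi_{\Delta,r,k})](g)$ together with a boundary term along $\partial U_\epsilon$, and the $\epsilon \to 0$ limit of that boundary term will be the claimed cycle integral.

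More precisely, for $\epsilon>0$ let $U_\epsilon$ denote an open $\epsilon$-neighborhood of $Z_{\Delta,r}(f)$ in $Y_0(N)$. On $Y_0(N)\setminus U_\epsilon$ the function $\Phi_{\Delta,r,k}(z,f)$ is smooth and harmonic by Theorems \ref{Singularities} and \ref{Locally Harmonic}, while $g\in A_{2k}^{rd}(\Gamma_0(N))$ is smooth with rapid decay at the cusps. The plan is to apply the standard $\xi$-adjointness identity on this region (which follows from Stokes' theorem and kills any contribution from the cusps thanks to the rapid decay) to obtain
\begin{equation*}
-\bigl(\Phi_{\Delta,r,k},\xi_{2k}(g)\bigr)_{2-2k,\,Y_0(N)\setminus U_\epsilon}
= \bigl(g,\xi_{2-2k}(\Phi_{\Delta,r,k})\bigr)_{2k,\,Y_0(N)\setminus U_\epsilon} + \mathcal{B}_\epsilon,
\end{equation*}
where $\mathcal{B}_\epsilon$ is an integral of a $1$-form along $\partial U_\epsilon$. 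Since by Proposition \ref{xi applied to expansion} the section $\xi_{2-2k}(\Phi_{\Delta,r,k})$ extends (holomorphically) across $Z_{\Delta,r}(f)$, the bulk integral converges as $\epsilon \to 0$ to $[\xi_{2-2k}(\Phi_{\Delta,r,k})](g)$.

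The remaining task is to show that $\mathcal{B}_\epsilon$ converges to $-\sqrt{2|\Delta|}\int_{Z_{\Delta,r}(f)} g(z)\,q_z(\lambda)^{k-1}\,dz$. I would parametrize a neighborhood of each geodesic component of $Z_{\Delta,r}(f)$ by arc-length $s$ together with a signed normal coordinate $\xi$; the contribution coming from the two sides $\xi=\pm\epsilon$ of the tube then captures precisely the jump of $\Phi_{\Delta,r,k}$ across the corresponding wall. By Theorem \ref{wall cross theorem}, this jump on the component attached to $\lambda\in L_{-d\Delta,rh}$ with principal-part coefficient $c^+(m,h)$ equals $2\sqrt{2|\Delta|}\,\chi_\Delta(\lambda)\,q_z(\lambda)^{k-1}$. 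Summing the contributions over all components (in the orientation convention fixed after Definition \ref{twisted cycle}) and halving the factor $2$ via the standard distributional relation $\partial H=\delta$ (which detects half the jump on each side) will yield the claimed formula.

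The main obstacle, as is typical with such current equations, is the bookkeeping: reconciling the orientation of $Z_{\Delta,r}(f)$ with the outward normal on $\partial U_\epsilon$, tracking the sign convention in $\xi_\kappa(f)=v^{\kappa-2}\overline{L_\kappa f}$, and checking that the constants from Theorem \ref{wall cross theorem} combine to exactly $\sqrt{2|\Delta|}$ (and not, say, $2\sqrt{2|\Delta|}$ or $\tfrac{1}{2}\sqrt{2|\Delta|}$). Once all conventions are consistent, the identity falls out of the limiting Stokes computation.
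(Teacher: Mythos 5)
Your overall architecture matches the paper's: apply the $\xi$-adjointness/Stokes identity, identify the bulk term with $\left[\xi_{2-2k}(\Phi_{\Delta,r,k}(z,f))\right](g)$ (the cusp contribution dying by the rapid decay of $g$), and recover the cycle integral as the $\epsilon\to 0$ limit of a boundary integral over the tube $\partial U_{\epsilon}$ around $Z_{\Delta,r}(f)$. The paper organizes this slightly differently --- it first applies the adjointness identity globally, leaving the exact-form integral $\int_{Y_0(N)}d\left(g(z)\Phi_{\Delta,r,k}(z,f)\right)dz$, splits $\Phi_{\Delta,r,k}$ into smooth and singular parts via Theorem \ref{Singularities}, and evaluates the singular contribution by excising the tube and using the explicit local form $\tfrac{(\lambda,v(z))}{|(\lambda,v(z))|}q_{z}(\lambda)^{k-1}\Gamma(\tfrac12,\cdot)$ rather than the wall-crossing formula --- but these routes are equivalent in substance.

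The genuine problem is your mechanism for the constant. The Stokes boundary term along the two sides of the tube detects the \emph{full} jump $\Phi_{W_1}-\Phi_{W_2}$, not half of it on each side; there is no ``$\partial H=\delta$ halving''. Feeding in Theorem \ref{wall cross theorem} therefore yields $2\sqrt{2|\Delta|}$ times a sum over representatives $\lambda$ with $(\lambda,W_1)<0$, i.e.\ over one element of each pair $\{\pm\lambda\}$. The correct reconciliation with the stated constant $\sqrt{2|\Delta|}$ is that the cycle $Z_{\Delta,r}(f)$ of Definition \ref{twisted cycle} runs over \emph{all} $\Gamma_0(N)$-orbits in $L_{-d\Delta,rh}$, so both $\lambda$ and $-\lambda$ appear; since $D_{-\lambda}=-D_{\lambda}$, $q_{z}(-\lambda)^{k-1}=(-1)^{k-1}q_{z}(\lambda)^{k-1}$, and the signs of $\chi_{\Delta}(-\lambda)$ and $c^{+}(m,-h)$ conspire exactly as in the proof of Theorem \ref{wall cross theorem}, the orbits of $\lambda$ and $-\lambda$ contribute equally to $\int_{Z_{\Delta,r}(f)}g(z)\,q_{z}(\lambda)^{k-1}dz$. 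Hence the per-pair constant $2\sqrt{2|\Delta|}$ becomes the per-orbit constant $\sqrt{2|\Delta|}$. (In the paper this issue never surfaces because it works with the singularity $\sqrt{|\Delta|/2}\sum_{\lambda}\chi_{\Delta}(\lambda)\tfrac{(\lambda,v(z))}{|(\lambda,v(z))|}q_{z}(\lambda)^{k-1}$ summed over all orbits, and the single factor $2$ from the jump of the sign function gives $\sqrt{2|\Delta|}$ directly.) You should also unfold the boundary computation to $\Gamma_{\lambda}\backslash\mathbb{H}$ to handle the infinite stabilizers of the closed geodesics, as the paper does; with these corrections your argument closes.
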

\begin{proof}
Using \cite[Lemma~4.2]{B-Habil} we see $\xi_{2-2k}\left[ \Phi_{\Delta,r,k}(z,f) \right](g)$ is equal to
\begin{align*} 
&  \int_{Y_{0}(N)} \overline{R_{2k-2}\left(y^{2-2k}\overline{\Phi_{\Delta,r,k}(z,f)}\right)} g(z) y^{2k}\frac{dxdy}{y^{2}}  -  \int_{ Y_{0}(N) } d \left(g(z) \Phi_{\Delta,r,k}(z,f) \right) dz .
\end{align*}
The left hand term is equal to $\left[ \xi_{2-2k}(\Phi_{\Delta,r,k}(z,f)) \right](g)$. It remains to consider the right hand term. We decompose $\Phi_{\Delta,r,k}(z,f)$ into its smooth and singular parts, both of which are of weight $2-2k$ for $\Gamma_{0}(N)$, see Theorem \ref{Singularities}. For the smooth part $h(z)$ we know using \cite[Lemma~4.2]{B-Habil}) that 
$ \lim_{t \rightarrow \infty}\int_{ Y_{0}(N) } d \left( g(z) h(z) \right) dz = -\lim_{t \rightarrow \infty} \int_{-1/2}^{1/2}\left[ g(z) h(z) \right]_{y=t} dx$.
This vanishes due to the rapid decay of $g$. We now consider the singular part of $\Phi_{\Delta,r,k}(z,f)$. Using Theorem \ref{Singularities} it suffices to consider
\begin{align}\label{last line}
  &- \sqrt{\frac{|\Delta|}{2}}  \sum_{h \in L'/L}  \sum_{\substack{m \in \mathbb{Z} - \mathrm{sgn}(\Delta)Q(h) \\ m < 0 }} c^{+}\left(m,h \right) \sum_{\lambda \in \Gamma_{0}(N) \backslash L_{-d\Delta,rh}}\chi_{\Delta}(\lambda) \notag\\ & \qquad\times \sum_{\gamma \in \Gamma_{\lambda} \backslash \Gamma_{0}(N)} \int_{ Y_{0}(N) } d \left( g(z)   \frac{(\gamma^{-1}.\lambda,v(z))}{|(\gamma^{-1}.\lambda,v(z))|} q_{z}(\gamma^{-1}.\lambda)^{k-1} \Gamma\left(\frac{1}{2}, \frac{-4 \pi Q((\gamma^{-1}.\lambda)_{z})}{|\Delta|} \right) \right) dz. 
\end{align}
We know $q_{\gamma.z}(\gamma.\lambda) = j(\gamma,z)^{-2}q_{z}(\lambda)$. We also know that $g(z)$ has weight $2k$ and that
$(\gamma.\lambda)_{(\gamma.z)} = \gamma.(\lambda_{z})$ and $(\gamma.\lambda,v(\gamma.z))  = (\lambda,v(z))$. So the integral in \eqref{last line} is equal to
\[ \int_{\Gamma_{\lambda} \backslash \mathbb{H} } d \left( g(z)   \frac{(\lambda,v(z))}{|(\lambda,v(z))|} q_{z}(\lambda)^{k-1} \Gamma\left(\frac{1}{2}, \frac{-4 \pi Q(\lambda_{z})}{|\Delta|} \right) \right) dz.
\]
For $z \in \mathbb{H}$ and any cycle $D_{\lambda}$ we let $\mathrm{dist}(z, D_{\lambda}) \coloneqq \min \left\lbrace |z-w| \mid w \in D_{\lambda} \right\rbrace.$ For any $\epsilon >0$ we let
$ U_{\epsilon}(E) \coloneqq \left\lbrace z \in \mathbb{H} \mid \mathrm{dist}(z, D_{\lambda}) < \epsilon \right\rbrace, $
which defines an $\epsilon$-neighbourhood around the cycle. Using Stokes' theorem we can see that
\[ \lim_{\epsilon \rightarrow 0}\int_{\partial\left(\Gamma_{\lambda} \backslash \left(\mathbb{H}\backslash U_{\epsilon}(\lambda) \right) \right) }  \frac{(\lambda,v(z))}{|(\lambda,v(z))|} \Gamma\left(\frac{1}{2}, \frac{-4 \pi Q(\lambda_{z})}{|\Delta|} \right) dz  =  2\sqrt{\pi} \int_{\Gamma_{\lambda} \backslash D_{\lambda}} 1 dz.
\]
Remember our cycles are oriented so that approaching $(\lambda,v(z))/|(\lambda,v(z))|$ with a left orientation or right orientation gives us $-1$ or $1$ i.e. $2$. The contributions from the $\Gamma_{0}(N)$-equivalent boundary pieces cancel and also as $Q(\lambda_{z}) \rightarrow 0$ the $\Gamma\left(\frac{1}{2}, \frac{-4\pi Q(\lambda_{z})}{|D|} \right)$ term approaches $\Gamma(1/2) = \sqrt{\pi}$. Putting all of this together gives the stated theorem. \end{proof}

We obtain the following refinement of Theorem \ref{Link}.

\begin{theorem}\label{link as distrubtions}
Let $f \in H_{3/2-k,\overline{\rho}}$. Then for $g \in A_{2k}^{rd}\left(\Gamma_{0}(N) \right)$ we have
\[ \xi_{2-2k}\left[ \Phi_{\Delta,r,k}(z,f) \right](g)  + \sqrt{2|\Delta|} \int_{Z_{\Delta,r}(f)} g(z) q_{z}(\lambda)^{k-1} dz = 2\left[ \Phi^{*}_{\Delta,r,k}(z,\xi_{3/2-k}(f)) \right](g).
\]
\end{theorem}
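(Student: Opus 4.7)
The plan is to derive this theorem directly by combining Theorem \ref{current equation for local} with the pointwise identity from Theorem \ref{Link}. Starting from Theorem \ref{current equation for local}, a trivial rearrangement gives
\[
\xi_{2-2k}\left[ \Phi_{\Delta,r,k}(z,f) \right](g) + \sqrt{2|\Delta|} \int_{Z_{\Delta,r}(f)} g(z) q_{z}(\lambda)^{k-1} dz = \left[ \xi_{2-2k}(\Phi_{\Delta,r,k}(z,f)) \right](g),
\]
so the only task is to identify the right-hand side with $2\left[\Phi^{*}_{\Delta,r,k}(z,\xi_{3/2-k}(f))\right](g)$.

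By Theorem \ref{Link}, for all $z \in \mathbb{H}\setminus Z'_{\Delta,r}(f)$ we have $\xi_{2-2k,z}(\Phi_{\Delta,r,k}(z,f)) = 2\Phi^{*}_{\Delta,r,k}(z,\xi_{3/2-k}(f))$. Since the associated distribution is defined by the integral
\[
\left[ \xi_{2-2k}(\Phi_{\Delta,r,k}(z,f)) \right](g) = \int_{Y_{0}(N)} g(z)\,\overline{\xi_{2-2k}(\Phi_{\Delta,r,k}(z,f))}\, y^{2k}\,\frac{dxdy}{y^2},
\]
and the exceptional set $Z_{\Delta,r}(f)$, being a locally finite union of geodesics in $Y_0(N)$, has measure zero, changing the integrand on $Z_{\Delta,r}(f)$ does not affect the value of the integral. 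Thus this pairing equals $2\left[\Phi^{*}_{\Delta,r,k}(z,\xi_{3/2-k}(f))\right](g)$, and the theorem follows.

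The only thing requiring a moment's care is the verification that both sides are actually finite and the manipulation legitimate. On the left, $\xi_{2-2k}[\Phi_{\Delta,r,k}(z,f)](g) = -(\Phi_{\Delta,r,k}(z,f), \xi_{2k}(g))_{2-2k}$ is well-defined because $\Phi_{\Delta,r,k}(z,f)$ is locally integrable (its singularities along $Z'_{\Delta,r}(f)$ are of the mild jump type described in Theorem \ref{Singularities}) and grows at most polynomially at the cusps by Proposition \ref{asymptotic of lift}, while $\xi_{2k}(g) \in A^{rd}_{2-2k}(\Gamma_0(N))$ provides rapid decay. The cycle integral converges since $Z_{\Delta,r}(f)$ is a finite union of compact cycles in $Y_0(N)$ and $g$ is smooth. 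On the right, Proposition \ref{xi applied to expansion} shows $\xi_{2-2k}(\Phi_{\Delta,r,k}(z,f))$ extends to a holomorphic function on all of $\mathbb{H}$, so the distributional pairing is simply a convergent Petersson-type integral against a holomorphic form paired with rapidly decaying $g$. With these convergence points settled, the chain of identities outlined above is immediate; no separate hard step arises here, since all the analytic work was already carried out in Theorems \ref{Link} and \ref{current equation for local}.
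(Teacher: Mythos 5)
Your proposal is correct and matches the paper's (implicit) argument exactly: the paper presents this theorem as an immediate refinement obtained by rearranging Theorem \ref{current equation for local} and substituting the pointwise identity of Theorem \ref{Link}, which holds off the measure-zero set $Z'_{\Delta,r}(f)$ and hence does not affect the integral defining the distribution. The additional convergence remarks you include are sound but not part of the paper's proof, which treats the statement as an immediate corollary.
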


We now consider the case where $g \in S_{2k}\left(\Gamma_{0}(N) \right) \subset A_{2k}^{rd}\left(\Gamma_{0}(N) \right)$. In particular, it vanishes under the $\xi_{2k}$-operator, and we obtain

\begin{corollary}\label{period integral}
Let $f \in H_{3/2-k,\overline{\rho}}$. Then for $g \in S_{2k}\left(\Gamma_{0}(N) \right)$ we have
\[
 \left[ \Phi^{*}_{\Delta,r,k}(z,\xi_{3/2-k}(f)) \right](g) = \sqrt{\frac{|\Delta|}{2}} \int_{Z_{\Delta,r}(f)} g(z) q_{z}(\lambda)^{k-1} dz. 
\]
\end{corollary}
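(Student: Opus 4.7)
The plan is to derive this as a direct consequence of Theorem~\ref{link as distrubtions}, using only the fact that a holomorphic cusp form is annihilated by $\xi$.

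First I would verify that $S_{2k}(\Gamma_0(N)) \subset A_{2k}^{rd}(\Gamma_0(N))$, so that it is legitimate to feed $g$ into the distributional identity of Theorem~\ref{link as distrubtions}. This is standard: a holomorphic cusp form decays exponentially at every cusp, which is certainly rapid decay. Thus the right-hand side $[\Phi^{*}_{\Delta,r,k}(z,\xi_{3/2-k}(f))](g)$ and the period integral $\int_{Z_{\Delta,r}(f)} g(z)q_z(\lambda)^{k-1}\,dz$ are both well-defined pairings of $g$ against the respective objects.

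Next I would compute $\xi_{2-2k}[\Phi_{\Delta,r,k}(z,f)](g)$. By the definition of the distributional derivative given just before Theorem~\ref{current equation for local},
\[
\xi_{2-2k}[\Phi_{\Delta,r,k}(z,f)](g) = -\bigl(\Phi_{\Delta,r,k}(z,f),\,\xi_{2k}(g)\bigr)_{2-2k}.
\]
Since $g$ is holomorphic, $L_{2k}g = -2iv^2\,\partial_{\bar\tau}g = 0$, so $\xi_{2k}(g) = v^{2k-2}\overline{L_{2k}g} = 0$. Consequently the left-hand side of the equation in Theorem~\ref{link as distrubtions} reduces to the period integral term alone:
\[
\sqrt{2|\Delta|}\int_{Z_{\Delta,r}(f)} g(z)\,q_z(\lambda)^{k-1}\,dz = 2\bigl[\Phi^{*}_{\Delta,r,k}(z,\xi_{3/2-k}(f))\bigr](g).
\]
Dividing by $2$ and noting $\tfrac{\sqrt{2|\Delta|}}{2} = \sqrt{|\Delta|/2}$ yields the claimed identity.

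There is essentially no obstacle here beyond invoking the previous result; all the real work (the application of Stokes' theorem and the careful analysis of the currents along $Z_{\Delta,r}(f)$) has already been carried out in Theorem~\ref{current equation for local} and the subsequent reformulation as Theorem~\ref{link as distrubtions}. If anything, the only point requiring a brief mention is the well-definedness of the pairing against $g \in S_{2k}$, i.e.\ the inclusion $S_{2k} \subset A_{2k}^{rd}$ and the convergence of the period integral over the (locally finite union of closed) geodesic cycles $Z_{\Delta,r}(f)$, which follows from the exponential decay of $g$ together with the fact that the principal part of $f$ involves only finitely many classes contributing to $Z_{\Delta,r}(f)$.
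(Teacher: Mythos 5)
Your proposal is correct and matches the paper's own (essentially one-line) argument: the paper likewise deduces the corollary from Theorem~\ref{link as distrubtions} by observing that $S_{2k}(\Gamma_0(N)) \subset A_{2k}^{rd}(\Gamma_0(N))$ and that $\xi_{2-2k}[\Phi_{\Delta,r,k}(z,f)](g) = -(\Phi_{\Delta,r,k}(z,f),\xi_{2k}(g))_{2-2k}$ vanishes since $\xi_{2k}(g)=0$ for holomorphic $g$, then divides by $2$. Your extra remarks on the well-definedness of the pairings are a reasonable supplement but nothing beyond what the paper takes for granted.
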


We close with an interpretation of Corollary \ref{period integral} in terms of the Shintani lift.

\begin{definition}
For $g \in S_{2k}(\Gamma_{0}(N))$ the Shintani lift is given by 
\[
\varphi^{*}_{\Delta,r,k}(\tau,g) \coloneqq (g(z),\overline{\Theta^{*}_{\Delta,r,k}(\tau,z)})_{2k}.
\]
\end{definition}

It is clear that $\varphi^{*}_{\Delta,r,k}(\tau,g)$ has weight $k+1/2$ and via Lemma~\ref{Link Thetas} we see that $\varphi^{*}_{\Delta,r,k}(\tau,g)$ is holomorphic with rapid decay since ${\Theta^{*}_{\Delta,r,k}(\tau,z)}$ is rapidly decaying. That is, 
\[
\varphi^{*}_{\Delta,r,k}(\tau,g)  \in S_{k+1/2.\rho}.
\]
Then Corollary~\ref{period integral}  gives us immediately 
\begin{lemma}
Let $f \in H_{3/2-k,\overline{\rho}}$ and $g \in S_{2k}\left(\Gamma_{0}(N) \right)$. Then
\[ \left(\varphi^{*}_{\Delta,r,k}(\tau,g),\xi_{3/2-k}(f)\right)_{k+1/2,\rho} = \sqrt{\frac{|\Delta|}{2}} \int_{Z_{\Delta,r}(f)} g(z) q_{z}(\lambda)^{k-1} dz. 
\]
\end{lemma}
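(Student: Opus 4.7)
The strategy is to recognize the left-hand side as the distributional pairing $[\Phi^*_{\Delta,r,k}(z,\xi_{3/2-k}(f))](g)$ via an adjointness/Fubini argument, and then invoke Corollary \ref{period integral} to finish.

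First, I would unwind the definition of $\varphi^{*}_{\Delta,r,k}(\tau,g)$ component-wise: since $(g(z), \overline{\Theta^{*}_{\Delta,r,k,h}(\tau,z)})_{2k}= \int_{Y_{0}(N)} g(z)\,\Theta^{*}_{\Delta,r,k,h}(\tau,z)\, y^{2k}\,\frac{dx\,dy}{y^{2}}$, we get the $\mathbb{C}[L'/L]$-valued identity $\varphi^{*}_{\Delta,r,k}(\tau,g) = \int_{Y_{0}(N)} g(z)\,\Theta^{*}_{\Delta,r,k}(\tau,z)\, y^{2k}\,\frac{dx\,dy}{y^{2}}.$ Plugging this into the Petersson product on the left-hand side and using that $\sum_h A_h\overline{B_h}=\overline{\sum_h \overline{A_h} B_h}$ componentwise gives
\[
\bigl(\varphi^{*}_{\Delta,r,k}(\tau,g), \xi_{3/2-k}(f)\bigr)_{k+1/2,\rho} = \int_{\mathcal{F}}\int_{Y_{0}(N)} g(z)\,\bigl\langle \Theta^{*}_{\Delta,r,k}(\tau,z), \xi_{3/2-k}(f)(\tau)\bigr\rangle\, y^{2k}v^{k+1/2}\,\frac{dx\,dy}{y^{2}}\,\frac{du\,dv}{v^{2}}.
\]

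Next I would justify a Fubini swap. The theta kernel $\Theta^{*}_{\Delta,r,k}(\tau,z)$ decays exponentially as $v\to\infty$ uniformly in $u$ (noted right after Definition \ref{Kernel Function}), while $\xi_{3/2-k}(f)\in S_{k+1/2,\rho}$ is a cusp form and hence also exponentially decaying. On the $z$-side, $g\in S_{2k}(\Gamma_0(N))$ has rapid decay at every cusp, and $\Theta^*$ has only polynomial growth in $y$ (from Proposition \ref{cusp growth of theta} applied to the $\Theta^*$ analogue, or directly from its defining sum). Hence the double integral is absolutely convergent and we may interchange the order of integration, obtaining
\[
\int_{Y_{0}(N)} g(z)\left[\int_{\mathcal{F}} \bigl\langle \Theta^{*}_{\Delta,r,k}(\tau,z), \xi_{3/2-k}(f)(\tau)\bigr\rangle v^{k+1/2}\,\frac{du\,dv}{v^{2}}\right] y^{2k}\,\frac{dx\,dy}{y^{2}}.
\]
The bracketed $\tau$-integral equals $\overline{\bigl(\xi_{3/2-k}(f),\Theta^{*}_{\Delta,r,k}(\tau,z)\bigr)_{k+1/2,\rho}} = \overline{\Phi^{*}_{\Delta,r,k}(z,\xi_{3/2-k}(f))}$ by Definition \ref{Definition shimura lift}, so the whole expression collapses to $\bigl(g,\Phi^{*}_{\Delta,r,k}(\cdot,\xi_{3/2-k}(f))\bigr)_{2k}$, which by definition of the associated distribution is precisely $\bigl[\Phi^{*}_{\Delta,r,k}(z,\xi_{3/2-k}(f))\bigr](g)$.

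Finally, Corollary \ref{period integral} identifies this distributional pairing with $\sqrt{\tfrac{|\Delta|}{2}}\int_{Z_{\Delta,r}(f)} g(z)\, q_{z}(\lambda)^{k-1}\,dz$, giving the claim. The only nontrivial step is the Fubini justification, but since all decay is exponential in the cuspidal directions and $g$ is rapidly decreasing, this is routine; the substantive content of the lemma is really packaged into the earlier Corollary \ref{period integral} (and thus into Theorem \ref{current equation for local}), and the present statement is essentially a reformulation of that corollary on the Shintani side via the adjointness of the theta kernel.
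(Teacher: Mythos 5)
Your proposal is correct and follows essentially the same route as the paper: the paper's proof is precisely the observation that the left-hand side equals the double integral $\int_{Y_{0}(N)} g(z) \int_{\mathcal{F}} \langle \Theta_{\Delta,r,k}^{*}(\tau,z), \xi_{3/2-k}(f(\tau))\rangle v^{k+1/2}\frac{dudv}{v^2}\, y^{2k}\frac{dxdy}{y^2}$, i.e.\ the distributional pairing $[\Phi^{*}_{\Delta,r,k}(\cdot,\xi_{3/2-k}(f))](g)$, followed by an appeal to Corollary \ref{period integral}. You merely make the Fubini justification and conjugation bookkeeping explicit (and in fact $\Theta^{*}$ decays rapidly in $z$ as well, which only strengthens your convergence argument).
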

\begin{proof}
This is clear from Corollary~\ref{period integral} after noticing the left hand side is equal to
\begin{align*}  & \int_{Y_{0}(N)} g(z) \int_{\tau \in \mathcal{F}} \left\langle \Theta_{\Delta,r,k}^{*}(\tau,z), \xi_{3/2-k}(f(\tau))   \right\rangle  v^{k+1/2}\frac{dudv}{v^2} y^{2k} \frac{dx dy}{y^2}
\\&  \quad {} = {}   \int_{\tau \in \mathcal{F}}\left\langle   \varphi^{*}_{\Delta,r,k}(\tau,g), \xi_{3/2-k}(f(\tau))  \right\rangle v^{k+1/2}\frac{dudv}{v^2}. \qedhere
\end{align*}
\end{proof}

We then obtain the Fourier expansion of the Shintani lift in terms of cycle integrals by considering $\xi_{3/2-k}(f)$ to be the standard (vector-valued) holomorphic Poincar\'e series of weight $k+1/2$.

\end{document}